\newcolumntype{P}[1]{>{\centering\arraybackslash}m{#1}}
\def\?[#1]{\textbf{[#1]}\marginpar{\Large{\textbf{??}}}}
\let\epsilon=\varepsilon % sorry Knuth
\newtheorem{theo}{Theorem}
\newtheorem*{theo*}{Theorem}
\newtheorem{prop}{Proposition}[section]
\newtheorem*{prop*}{Proposition}	
\newtheorem{defi}[prop]{Definition}
\newtheorem*{defi*}{Definition}
\newtheorem{lemm}[prop]{Lemma}
\newtheorem*{lemm*}{Lemma}
\newtheorem{corr}[prop]{Corollary}
\newtheorem{rem}{Remark}
\numberwithin{equation}{section}
\DeclareMathOperator{\tr}{tr}
\newcommand\reallywidehat[1]{\arraycolsep=0pt\relax%
\begin{array}{c}
\stretchto{
  \scaleto{
    \scalerel*[\widthof{\ensuremath{#1}}]{\kern-.5pt\bigwedge\kern-.5pt}
    {\rule[-\textheight/2]{1ex}{\textheight}} %WIDTH-LIMITED BIG WEDGE
  }{\textheight} % 
}{0.5ex}\\           % THIS SQUEEZES THE WEDGE TO 0.5ex HEIGHT
#1\\                 % THIS STACKS THE WEDGE ATOP THE ARGUMENT
\rule{-1ex}{0ex}
\end{array}
}
\author{Tristan Humbert}
\email{humbertt@imj-prg.fr}
\address{Sorbonne université, Paris France 75005.}
\begin{document}
\begin{abstract}
For Anosov diﬀeomorphisms on the $3$-torus $\mathbb T^3$ which are strongly partially hyperbolic with expanding center, we construct systems of strong unstable and center stable Margulis measures which are holonomy-invariant. This allows us to obtain a characterization of the measures of maximal unstable entropy in terms of their conditional measures along strong unstable leaves. Moreover, we show that the Margulis systems identify with Pollicott-Ruelle (co)-resonant states for the action of the diffeomorphism on $2$-forms. This shows that the unstable topological entropy and a measure of maximal unstable entropy can be retrieved from the spectral approach.
\end{abstract}
\title{Unstable entropy for Anosov diffeomorphisms on the $3$-torus}
\maketitle
\section{Introduction}
\subsection{Anosov diffeomorphisms with a partially hyperbolic splitting}
Let $f\in \mathrm{Diffeo}^{\infty}(\mathbb T^3)$ be a smooth diffeomorphism of $\mathbb T^3$. Suppose that $f$ is  \emph{Anosov} with splitting $T(\mathbb T^3)=E_s\oplus E_{cu}$ where $\mathrm{dim}(E_s)=1$ and $\mathrm{dim}(E_{cu})=2$. Suppose in addition that the unstable bundle $E_{cu}$ of $f$ splits as $E_{cu}=E_c\oplus E_u$ where $E_c$ is dominated by $E_u$, see \S \ref{preli} for the precise definitions. In other words, $f$ is a (strongly) partially hyperbolic diffeomorphism with splitting $T(\mathbb T^3)=E_u\oplus E_c\oplus E_s$. We will call $E_c$ the \emph{center direction} and $E_u$ the \emph{strong unstable direction} of $f$. We further suppose that $f$ is transitive and orientation preserving\footnote{This assumption is made to simplify the exposition. If $\mathrm{det}(f)<0$, one can consider $f\circ f$ instead. }. Let $\mathcal A_+^{\infty}(\mathbb T^3)$ be the set of such diffeomorphisms. 
\subsection{Unstable entropy}
For a partially hyperbolic diffeomorphism $f$ and an invariant probability measure $\mu$, Hu, Hua and Wu \cite{HuHuWu} defined the notion of \emph{metric entropy of $\mu$ with respect to the strong unstable foliation $E_u$,} which we denote by $h^u_{\mu}(f)$. Another important invariant associated to $E_u$ is the \emph{unstable topological entropy} $h^u_{\mathrm{top}}(f)$; an invariant introduced by Saghin and Xia \cite{SaXi} which captures the exponential growth rate of the volume of balls in the strong unstable manifold. The two notions of entropies are linked by a \emph{variational principle} \cite[Theorem D]{HuHuWu}, 
\begin{equation}
\label{eq:varia11}
h^u_{\mathrm{top}}(f)=\sup_{\mu \in \mathcal P_f(\mathbb T^3)}h^u_\mu(f),
\end{equation}
where $\mathcal P_f(\mathbb T^3)$ denotes the set of $f$-invariant probability measure on $\mathbb T^3$, see \S \ref{secUnstable} for a quick review of these notions.
 A probability measure which attains the supremum is called a \emph{measure of maximal $u$-entropy} or $u$-MME. Their set will be denoted by $\mathcal{M}^u(f)$.  Hu,Wu and Zhu \cite[Theorem B]{HuWuZh} showed that $\mathcal{M}^u(f)$ is always non-empty, convex, compact and that its extreme points are precisely the ergodic measures of maximal $u$-entropy. In this paper, we compute  $h^u_{\mathrm{top}}(f)$ and study some properties of $\mathcal{M}^u(f)$. To state our results, let us define the \emph{center Jacobian},
$$
J^c_f\in C^{\alpha}(\mathbb T^3,\mathbb R),\ \forall x\in \mathbb T^3, \quad J_f^c(x):=-\ln \|df_x|_{E_c(x)}\|.
$$
Since $J^c_f\in C^\alpha(\mathbb T^3)$ and $f$ is Anosov there exists a unique associated equilibrium state $\mu_{J^c_f}$. Let $P(J^c_f)$ be the pressure of $J^c_f$. Recall that $f$ is conjugated to its action on homology $A\in \mathrm{GL}_3(\mathbb Z)$ via a Hölder continuous homeomorphism. Since, $f\in \mathcal A_+^{\infty}(\mathbb T^3)$, the spectrum of $A$ is given by $\mathrm{Spec}(A):=\{\lambda_s<1<\lambda_c<\lambda_u\}$. With these notations, our first result reads:
\begin{theo}[Unstable entropy]
\label{main theo}
For any $f\in \mathcal A_+^{\infty}(\mathbb T^3)$, one has
\begin{equation}
\label{eq:equality}
e^{h^u_{\mathrm{top}}(f)}=\lambda_u=e^{P(J^c_f)}, \quad \mu_{J^c_f}\in \mathcal{M}^u(f).
\end{equation}
As a consequence, $\mathcal A_+^{\infty}(\mathbb T^3)\ni f\mapsto h^u_{\mathrm{top}}(f)$ is invariant by topological conjugacy and thus locally constant.
\end{theo}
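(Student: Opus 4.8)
The plan is to prove the chain of equalities $e^{h^u_{\mathrm{top}}(f)}=\lambda_u=e^{P(J^c_f)}$ together with the membership $\mu_{J^c_f}\in\mathcal M^u(f)$, and then deduce the conjugacy invariance. I would start from the middle equality $\lambda_u=e^{P(J^c_f)}$, which is purely a statement about the equilibrium state of $J^c_f$ on the Anosov system $f$. The key identity is that the center Jacobian integrates, against \emph{any} $f$-invariant measure, to (minus) the center Lyapunov exponent, and similarly the full unstable Jacobian $J^{cu}_f=-\ln|\det df|_{E_{cu}}|$ integrates to (minus) the sum of the center and strong-unstable exponents. Since $f$ is topologically conjugate to the linear automorphism $A$ and the unstable Lyapunov exponents are cohomology-class invariants in this setting (the leafwise structure is controlled by the partially hyperbolic splitting), the sum of the two positive exponents of $f$ equals $\ln\lambda_c+\ln\lambda_u$ for every invariant measure, hence $P(J^{cu}_f)=0$ by the volume lemma / SRB theory (the equilibrium state of $-\ln|\det df|_{E_{cu}}|$ is the SRB measure and has zero pressure). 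Writing $J^{cu}_f=J^c_f + (J^u_f)$ with $J^u_f=-\ln\|df|_{E_u}\|$, and using that along the strong unstable leaves $\int J^u_f\,d\mu = -\ln\lambda_u$ is rigid (again by the conjugacy to $A$, because $\lambda_u$ is the only available strong-unstable rate), a standard pressure computation gives $P(J^c_f)=P(J^{cu}_f)+\ln\lambda_u=\ln\lambda_u$. This identifies $e^{P(J^c_f)}=\lambda_u$.

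Next I would tie this to the unstable topological entropy. By the variational principle \eqref{eq:varia11}, $h^u_{\mathrm{top}}(f)=\sup_\mu h^u_\mu(f)$, and by the work of Hu--Hua--Wu the unstable metric entropy of an invariant measure is bounded above by the sum of the positive Lyapunov exponents \emph{transverse to nothing} — more precisely, $h^u_\mu(f)\le \int \log\|df|_{E_u}\| \,d\mu = \ln\lambda_u$ (a Ruelle-type inequality for the strong unstable foliation), with equality exactly for measures whose conditionals on strong unstable leaves are the Margulis-type (SRB-along-$E_u$) densities. This yields $h^u_{\mathrm{top}}(f)\le\ln\lambda_u$. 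For the reverse inequality, one exhibits an invariant measure achieving $h^u_\mu(f)=\ln\lambda_u$: I would verify that the equilibrium state $\mu_{J^c_f}$ does the job. Concretely, $\mu_{J^c_f}$ is characterized by $h_{\mu_{J^c_f}}(f) + \int J^c_f\,d\mu_{J^c_f} = P(J^c_f)=\ln\lambda_u$, i.e. $h_{\mu_{J^c_f}}(f) = \ln\lambda_u + \int\log\|df|_{E_c}\|\,d\mu_{J^c_f} = \ln\lambda_u + \lambda_c(\mu_{J^c_f})$ where $\lambda_c\ge 0$ since the center is expanding. Decomposing the full metric entropy via the Ledrappier--Young / Abramov--Rokhlin machinery along the dominated splitting $E_u\oplus E_c$ of the unstable bundle, $h_\mu(f)=h^u_\mu(f)+h^{cu}_\mu(f|E_c\text{-part})$, and bounding the center contribution above by $\lambda_c(\mu)$ (Ruelle inequality in the center), one gets $h^u_{\mu_{J^c_f}}(f)\ge h_{\mu_{J^c_f}}(f)-\lambda_c(\mu_{J^c_f})=\ln\lambda_u$. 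Combined with the upper bound this forces $h^u_{\mathrm{top}}(f)=\ln\lambda_u$ and $\mu_{J^c_f}\in\mathcal M^u(f)$.

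For the final assertion, topological conjugacy invariance and hence local constancy: all three quantities in \eqref{eq:equality} are manifestly conjugacy invariant once the equality is known — $h^u_{\mathrm{top}}(f)$ because it is a topological invariant of the pair $(f,\mathcal F^u)$ and $\mathcal F^u$ is the unique strong unstable foliation (dynamically defined), and $\lambda_u$ because it is the spectral radius of the action on $H_1(\mathbb T^3;\mathbb Z)$, which is a conjugacy invariant of $f$. Since within $\mathcal A_+^\infty(\mathbb T^3)$ the integer matrix $A$ varies continuously (hence locally constantly) with $f$, the function $f\mapsto h^u_{\mathrm{top}}(f)=\ln\lambda_u(A)$ is locally constant. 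I expect the main obstacle to be the rigidity input $\int\log\|df|_{E_u}\|\,d\mu=\ln\lambda_u$ for all invariant $\mu$ (equivalently $P(J^c_f)=\ln\lambda_u$): this requires genuinely using that $f$ is conjugate to $A$ on $\mathbb T^3$ and that the conjugacy maps strong unstable leaves to the corresponding linear foliation with the right asymptotic expansion rate — the kind of leaf-conjugacy / Franks--Manning argument that is delicate in the partially hyperbolic setting and will presumably lean on the holonomy-invariant Margulis systems constructed earlier in the paper.
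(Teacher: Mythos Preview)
Your proposal has a genuine gap at the step you yourself flag as the main obstacle: the rigidity claim that $\int \log\|df|_{E_u}\|\,d\mu=\ln\lambda_u$ for every invariant $\mu$ (equivalently, that $J^u_f$ is cohomologous to the constant $-\ln\lambda_u$). This is false in general. The conjugacy $h$ to the linear model $A$ does \emph{not} send the strong unstable foliation of $f$ to that of $A$ unless $E_u$ and $E_s$ are jointly integrable, a condition that fails on an open dense set in $\mathcal A_+^\infty(\mathbb T^3)$ (see \S\ref{seccomp} and the reference to \cite{RGZ}). Consequently there is no reason for the strong unstable Lyapunov exponent to be measure-independent, and both your pressure identity $P(J^c_f)=P(J^{cu}_f)+\ln\lambda_u$ and your upper bound $h^u_\mu(f)\le\ln\lambda_u$ collapse: the Ruelle-type inequality $h^u_\mu(f)\le\int\log\|df|_{E_u}\|\,d\mu$ is correct, but the right-hand side varies with $\mu$.

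The paper circumvents this in two separate ways. First, $e^{h^u_{\mathrm{top}}(f)}=\lambda_u$ is obtained \emph{geometrically} (Proposition~\ref{Mainprop} and Corollary~\ref{corr12}): Potrie's quasi-isometry of the lifted strong unstable foliation, together with $\|H-\mathrm{Id}\|_\infty<\infty$, gives uniform two-sided bounds $C_1\le\lambda_u^{-n}\mathrm{Vol}(F^n\widetilde{\mathcal W}^u_f(\tilde x,\delta))\le C_2$, from which the volume-growth definition \eqref{eq:hutop1} yields the equality directly---no Lyapunov exponents enter. Second, $\lambda_u=e^{P(J^c_f)}$ is obtained by a sandwich: your Step~3 argument (which is the paper's \eqref{eq:uvaria}, via \cite[Corollary~A.1]{HuHuWu}) gives $P(J^c_f)\le h^u_{\mathrm{top}}(f)=\ln\lambda_u$, while the reverse inequality $\ln\lambda_u\le P(J^c_f)$ comes from the spectral route of \S\ref{sec4}: the dynamical determinant $\mathcal D_{f,2}$ has $e^{P(J^c_f)}$ as its first zero (Lemma~\ref{lemm1}, via Ruelle's weighted zeta function), and the explicit Artin--Mazur zeta function \eqref{eq:zetafA} forces $\lambda_u$ to be a resonance on $2$-forms (Lemma~\ref{lemmfirst}), hence $\lambda_u\le e^{P(J^c_f)}$. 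Your lower-bound argument for $h^u_{\mu_{J^c_f}}(f)$ is essentially the paper's and is fine; it is the other two legs that need a different mechanism.
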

In particular, as an equilibrium state for an Anosov map, $\mu_{J^c_f}$ satisfies exponential mixing. Moreover, $\mu_{J^c_f}$ can be expressed via Bowen's formula in terms of periodic points, see for instance \cite[Theorem 20.3.7]{HK}.

In a recent survey \cite{Ta}, Tahzibi asked whether for an Anosov diffeomorphism which is partially hyperbolic with expanding center, a measure of maximal $u$-entropy is necessarily a measure of maximal entropy. We will say that $E_s$ and $E_u$ are \emph{jointly integrable} if there is a 2 dimensional invariant foliation tangent to $E_s\oplus E_u$. Using a criteria due to Gan and Shi \cite{GaSh}, we show the following result.
\begin{corr}
\label{cor1}
Let $f\in\mathcal A_+^{\infty}(\mathbb T^3)$. Then $\mu_{J^c_f}$ is a measure of maximal entropy if and only if $E_u$ and $E_s$ are jointly integrable.
\end{corr}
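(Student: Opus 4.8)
I would deduce the corollary from Theorem~\ref{main theo} and the criterion of Gan and Shi \cite{GaSh}, by passing through the center periodic data of $f$. Since $f$ is a transitive Anosov diffeomorphism it has a unique measure of maximal entropy $\mu_{\mathrm{top}}$, namely the equilibrium state of the zero potential, and $\mu_{J^c_f}$ is the equilibrium state of $J^c_f$; hence $\mu_{J^c_f}$ is a measure of maximal entropy if and only if $\mu_{J^c_f}=\mu_{\mathrm{top}}$. By uniqueness of equilibrium states for Hölder potentials on a transitive Anosov system, this equality is equivalent to $J^c_f$ being cohomologous (through a Hölder transfer function) to a constant $c_0$. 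Comparing pressures identifies the constant: on the one hand $P(J^c_f)=P(c_0)=c_0+h_{\mathrm{top}}(f)$, and on the other $P(J^c_f)=\ln\lambda_u$ by Theorem~\ref{main theo} while $h_{\mathrm{top}}(f)=h_{\mathrm{top}}(A)=\ln\lambda_c+\ln\lambda_u$ (topological entropy is a conjugacy invariant, and $A$ is a hyperbolic automorphism with real eigenvalues $\lambda_s<1<\lambda_c<\lambda_u$), so that $c_0=-\ln\lambda_c$. Therefore $\mu_{J^c_f}$ is a measure of maximal entropy if and only if $J^c_f+\ln\lambda_c$ is a Hölder coboundary. (Equivalently, from the identity $h_{\mu_{J^c_f}}(f)=P(J^c_f)-\int J^c_f\,d\mu_{J^c_f}=\ln\lambda_u-\int J^c_f\,d\mu_{J^c_f}$ one sees that $\mu_{J^c_f}$ is a measure of maximal entropy precisely when its center Lyapunov exponent $-\int J^c_f\,d\mu_{J^c_f}=\int\ln\|df|_{E_c}\|\,d\mu_{J^c_f}$ equals the center exponent $\ln\lambda_c$ of the linearization.)

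Next I would apply the Livšic theorem: the Hölder function $J^c_f+\ln\lambda_c$ is a coboundary if and only if all of its Birkhoff sums over periodic orbits vanish, that is, if and only if $\|df^n|_{E_c(p)}\|=\lambda_c^{\,n}$ for every $f$-periodic point $p$ of period $n$; equivalently, the center Lyapunov exponent of $f$ along every periodic orbit equals the center exponent $\ln\lambda_c$ of $A$. It then remains to quote the criterion of Gan and Shi \cite{GaSh}, which applies to every $f\in\mathcal A_+^{\infty}(\mathbb T^3)$ and states that $E_s$ and $E_u$ are jointly integrable if and only if the center Lyapunov exponent of $f$ along every periodic orbit coincides with that of its linearization. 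Combining this with the previous paragraph proves the corollary.

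The only nonelementary ingredient is the Gan--Shi criterion, which I would invoke as a black box; everything else is standard thermodynamic formalism (uniqueness of equilibrium states, behaviour of pressure under coboundaries and additive constants, the Livšic theorem) together with Theorem~\ref{main theo}. The step that requires the most care is matching the precise statement of \cite{GaSh} with the periodic-data formulation above: if \cite{GaSh} is phrased instead in terms of the measure of maximal entropy $\mu_{\mathrm{top}}$ having center exponent $\ln\lambda_c$, or in terms of triviality of the stable and unstable holonomies, one substitutes the corresponding equivalent form from the first paragraph, using in addition the remark that under joint integrability $\mu_{\mathrm{top}}$ attains the supremum defining $P(J^c_f)$ and so coincides with $\mu_{J^c_f}$ by uniqueness of equilibrium states.
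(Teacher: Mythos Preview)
Your proof is correct and follows essentially the same route as the paper: reduce the question to whether $J^c_f$ is cohomologous to a constant (via uniqueness of equilibrium states), translate this into a periodic-data condition via the Liv\v{s}ic theorem, and then invoke the Gan--Shi criterion \cite{GaSh} for the equivalence with joint integrability. The only minor difference is that you identify the constant as $-\ln\lambda_c$ using Theorem~\ref{main theo} together with $h_{\mathrm{top}}(f)=\ln\lambda_c+\ln\lambda_u$, whereas the paper cites \cite[Corollary~2.6]{GaSh} for this step.
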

For $f\in \mathcal A_+^{\infty}(\mathbb T^3)$, joint integrability of $E_u$ and $E_s$ is equivalent to $f$ being not \emph{accessible}, see  \cite{RGZ}. Accessibilty is $C^r$-dense ($r\geq1$) \cite{DW,BRHRHTU} and $C^1$-open \cite{Di} in $ \mathcal A_+^{\infty}(\mathbb T^3)$. We thus answer negatively to \cite[Question 2]{Ta} for an open and dense set of diffeomorphisms $f\in \mathcal A_+^{\infty}(\mathbb T^3)$.
\subsection{Margulis system of measures}
Next, we study more precisely the structure of $\mathcal M^u(f)$. To this end, we will construct systems of measures $(\mu^u_x)_{x\in \mathbb T^3}$ (resp. $(\mu^{cs}_x)_{x\in \mathbb T^3}$) on the strong unstable (resp. center stable) manifolds. The study of measures of maximal entropy (and more generally equilibrium states) via system of leaf measures has a rich and long history, see the introductions of \cite{CPZ20,Hum} for an account of the existing literature.

A \emph{system of $u$-measures} $($resp. \emph{$cs$-measures$)$} is a family of Borelian measures $(\mu^u_x)_{x\in \mathbb T^3}$ (resp. $(\mu^{cs}_x)_{x\in \mathbb T^3})$ on strong unstable manifolds $(\mathcal W^u(x))_{x\in \mathbb T^3}$ (resp. center stable manifolds $(\mathcal W^{cs}(x))_{x\in \mathbb T^3}$), see Definition \ref{def}. For $p,q\in \mathbb T^3$, close enough such that $p\in \mathcal W^u (q)$ (resp. $p\in \mathcal W^{cs} (q)$), we denote by $\mathrm{Hol}^u_{p,q}:\mathcal W^{cs}(p)\to \mathcal W^{cs}(q)$ (resp. $\mathrm{Hol}^{cs}_{p,q}:\mathcal W^{u}(p)\to \mathcal W^{u}(q)$) the local $u$-holonomy (resp. $cs$-holonomy), see \S \ref{secholo}.
\begin{theo}[Margulis system]
\label{theoMarg}
Let $f\in \mathcal A_+^{\infty}(\mathbb T^3)$. There exists a system of $u$-measures $(\mu^{u}_x)_{x\in \mathbb T^3}$ and a system of $cs$-measures $(\mu^{cs}_x)_{x\in \mathbb T^3}$ such that:
\begin{itemize}
\item  the systems are conformal with respect to $f$
\begin{equation}
\label{eq:conf}
\forall x\in \mathbb T^3, \quad f^*\mu^u_x=e^{h^u_{\mathrm{top}}(f)}\mu^u_{f^{-1}(x)},\quad  f_*\mu^{cs}_x=e^{h^u_{\mathrm{top}}(f)}\mu^{cs}_{f(x)};
\end{equation} 
\item the systems are holonomy invariant:
\begin{equation}
\label{eq:holhol}
(\mathrm{Hol}^{cs}_{x,z})_*\mu^u_z= \mu^u_x,\quad (\mathrm{Hol}^{u}_{y,w})_*\mu^{cs}_w= \mu^{cs}_y,
\end{equation}
for any points $x,z$ $($resp. $y,w)$ for which the center stable $($resp. strong unstable$)$ holonomy is well defined.
\end{itemize}
The systems are called $u$ and $cs$-Margulis systems of measures respectively.
Moreover, the measure $\mu_{J^c_f}$ has a strong unstable/center stable local product structure with $u$ $($resp. $cs)$ conditionals given by $(\mu^u_x)_{x\in \mathbb T^3}$ $($resp. $(\mu^{cs}_x)_{x\in \mathbb T^3})$, see \eqref{eq:localprod}.
\end{theo}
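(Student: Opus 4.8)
The plan is to obtain the two systems as suitably normalized conditional measures of the equilibrium state $\mu_{J^c_f}$ along $\mathcal W^u$ and $\mathcal W^{cs}$, and to extract all four required properties from the thermodynamic formalism of the Anosov map $f$ together with Theorem \ref{main theo}. Since $f$ is Anosov and $J^c_f\in C^\alpha$, the state $\mu_{J^c_f}$ has a Hölder local product structure with respect to the transverse pair $(\mathcal W^{cu},\mathcal W^s)$ (Bowen's theory of Gibbs measures, see \cite{HK,CPZ20}). On the surface $\mathcal W^{cu}(x)$ the two transverse one–dimensional foliations $\mathcal W^u$ and $\mathcal W^c$ always admit a joint product chart, and symmetrically $\mathcal W^{cs}(x)$ is charted by $\mathcal W^c$ and $\mathcal W^s$; disintegrating $\mu_{J^c_f}$ successively along this flag produces conditional measures $(\mu^u_x)$, $(\mu^c_x)$, $(\mu^s_x)$ with Hölder transition densities. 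I then set $\mu^{cs}_x:=\mu^c_x\otimes\mu^s_x$ and take $(\mu^u_x)$ as the candidate $u$–system.

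The first substantial point is that the center conditionals are absolutely continuous. Here Theorem \ref{main theo} enters: by a Ledrappier–Young type entropy identity along the flag $\mathcal W^u\subset\mathcal W^{cu}$ (cf.\ \cite{HuHuWu}) one has, for the ergodic measure $\mu_{J^c_f}$, $h_{\mu_{J^c_f}}(f)=h^u_{\mu_{J^c_f}}(f)+\gamma\,\lambda^c_{\mu_{J^c_f}}$ with $\gamma\in[0,1]$ the transverse dimension of the center conditionals and $\gamma=1$ exactly when they lie in the Lebesgue class; since $\mu_{J^c_f}$ is the equilibrium state of $J^c_f=-\ln\|df|_{E_c}\|$ we have $h_{\mu_{J^c_f}}(f)-\lambda^c_{\mu_{J^c_f}}(f)=P(J^c_f)$, and since $\mu_{J^c_f}\in\mathcal M^u(f)$ by Theorem \ref{main theo} we have $h^u_{\mu_{J^c_f}}(f)=h^u_{\mathrm{top}}(f)=P(J^c_f)$; comparing forces $\gamma=1$, and a bounded–distortion argument upgrades this to center conditionals in the Lebesgue class with Hölder density. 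Conformality is then a cocycle computation: the conditionals of the equilibrium state along $\mathcal W^{cu}$ (resp.\ $\mathcal W^s$) satisfy the Gibbs relation with cocycle $e^{P(J^c_f)}\,\|df|_{E_c}\|\circ f^{-1}$ (up to the usual conventions), while Lebesgue along $\mathcal W^c$ has cocycle $\|df|_{E_c}\|\circ f^{-1}$. Because $J^c_f$ is exactly $-\ln\|df|_{E_c}\|$, the center cocycle cancels when one writes $\mu^{cu}=\mu^u\otimes\mu^c$ and $\mu^{cs}=\mu^c\otimes\mu^s$, and — once the residual Hölder density relating $\mu^{cu}$ to $\mu^u\otimes\mu^c$ is shown to be $f$–invariant, hence constant by topological transitivity — one is left with the constant $e^{P(J^c_f)}=e^{h^u_{\mathrm{top}}(f)}$, which is precisely \eqref{eq:conf}. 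The orientation–preserving hypothesis is convenient here to make $\|df|_{E_c}\|$ and the induced leaf orientations globally unambiguous.

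The main obstacle is holonomy invariance \eqref{eq:holhol}. By construction the Radon–Nikodym cocycle $\psi^u(x,z):=d(\mathrm{Hol}^{cs}_{x,z})_*\mu^u_z/d\mu^u_x$ is well defined and Hölder along leaves (it is read off the Hölder product structure), it obeys the chain rule $\psi^u(x,z)\,\psi^u(z,w)=\psi^u(x,w)$ with $\psi^u(x,x)=1$, and, differentiating the equivariance $\mathrm{Hol}^{cs}_{fx,fz}\circ f=f\circ\mathrm{Hol}^{cs}_{x,z}$ against the conformality just established, it is invariant: $\psi^u(fx,fz)=\psi^u(x,z)$ (and symmetrically for $\mu^{cs}$ under $\mathrm{Hol}^u$). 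A Livšic–type argument — propagating invariance towards the diagonal and invoking topological transitivity of $f$ — then forces $\psi^u\equiv1$. The delicate feature, and the reason the classical Margulis–Bowen–Marcus scheme does not transcribe verbatim, is that the center–stable foliation is not uniformly contracted: the $\mathcal W^c$–component of a $cs$–holonomy is expanded by $f$, only strictly slower than the $e^{h^u_{\mathrm{top}}}$–growth of $\mu^u$, so $\psi^u$ cannot be killed by merely iterating holonomies and using contraction of a reference measure. It is exactly to bypass this that the construction proceeds "top–down", recognizing $\mu^u$ and $\mu^{cs}$ as conditionals of the already available, exponentially mixing measure $\mu_{J^c_f}$, whose continuous product structure is what makes the invariant–cocycle argument available; the $\mathcal W^c$–direction is handled by running the invariance under $f^{-n}$ and the $\mathcal W^s$–direction under $f^{n}$. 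An independent confirmation, developed later in the paper, comes from the microlocal description of $\mu^u$ and $\mu^{cs}$ as leading Pollicott–Ruelle (co)resonant states for $f$ acting on $2$–forms, where \eqref{eq:conf} is the resonance equation and \eqref{eq:holhol} reflects the wavefront set condition.

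Finally, once the conditionals are holonomy invariant, the disintegrations of the first paragraph combine: over a box $\mathcal W^u_{\mathrm{loc}}\times\mathcal W^{cs}_{\mathrm{loc}}$ the measure $\mu_{J^c_f}$ equals $\mu^u\otimes\mu^{cs}$ up to a continuous density that is again $f$–invariant, hence constant by transitivity, yielding the local product structure \eqref{eq:localprod} with the stated $u$– and $cs$–conditionals and completing the proof. The two remaining points where genuine care is needed are (i) the uniform (in the leaf variable) regularity of $\psi^u$ used to propagate its invariance to the diagonal, and (ii) checking that the several Hölder product densities that appear are honest $f$–coboundaries so that transitivity applies; both are instances of the same mechanism, namely that a continuous $f$–invariant object on a transitive system is constant.
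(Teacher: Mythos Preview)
Your overall strategy — read both systems off the conditionals of $\mu_{J^c_f}$ and then argue — is natural, but the holonomy-invariance step has a genuine gap that I do not see how to close along the lines you sketch.

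\textbf{The gap.} You define $\psi^u(x,z):=d(\mathrm{Hol}^{cs}_{x,z})_*\mu^u_z/d\mu^u_x$ and say it is ``read off the H\"older product structure''. But the only product structure Bowen's theory hands you for free is the one along the \emph{Anosov} splitting $(\mathcal W^s,\mathcal W^{cu})$; a product structure along $(\mathcal W^u,\mathcal W^{cs})$ is precisely the conclusion of the theorem, not an input. In particular, nothing you have written guarantees that $(\mathrm{Hol}^{cs}_{x,z})_*\mu^u_z\ll\mu^u_x$ at all, let alone with a H\"older density. This is not a technicality: as emphasized in \S\ref{secholo}, the center-stable holonomies are in general \emph{not} absolutely continuous, so even if $\mu^u_x$ were Lebesgue the quasi-invariance would fail. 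Decomposing $\mathrm{Hol}^{cs}$ as $\mathrm{Hol}^c\circ\mathrm{Hol}^s$ does not help either: the $c$-holonomy inside a fixed $\mathcal W^{cu}$-leaf is indeed $C^1$, but the $s$-holonomy between two strong unstable leaves sitting in \emph{different} $\mathcal W^{cu}$-leaves has no a priori regularity with respect to $\mu^u$. Even granting existence of $\psi^u$, the Liv\v{s}ic step is incomplete: the invariance $\psi^u(fx,fz)=\psi^u(x,z)$ does not force $\psi^u\equiv1$, because for $z\in\mathcal W^{cs}(x)$ neither $f^n$ nor $f^{-n}$ sends the pair $(x,z)$ to the diagonal — one direction in $\mathcal W^{cs}$ expands, the other contracts — and ``running $f^{-n}$ for the $c$-part and $f^n$ for the $s$-part'' is not a well-defined operation on a single holonomy cocycle.

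\textbf{What the paper does instead.} The paper exploits an asymmetry you do not use. It first constructs $(\mu^u_x)$ directly by a Schauder--Tychonoff fixed-point argument on averaged pushforwards of leaf Lebesgue (Proposition~\ref{leafmeasure}), obtaining conformality but \emph{not} yet $\mathrm{Hol}^{cs}$-invariance. Independently, it builds the $cs$-system as restrictions of the Pollicott--Ruelle resonant state $\theta$ to center-stable leaves and proves $\mathrm{Hol}^u$-invariance of $(\theta_x)$ \emph{directly} (Proposition~\ref{propholu}); this direction is accessible precisely because the strong unstable foliation \emph{is} absolutely continuous, so the needed Radon--Nikodym derivative exists and is controlled by \eqref{eq:abs}. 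Once $(\theta_x)$ is identified with the $cs$-conditionals of $\mu_{J^c_f}$ (Proposition~\ref{prophol}), the $\mathrm{Hol}^{cs}$-invariance of $(\mu^u_x)$ falls out for free: the transverse measure $\tilde\mu_q$ of \eqref{eq:tildemu} is \emph{tautologically} $\mathrm{Hol}^{cs}$-invariant by its definition, and the $\mathrm{Hol}^u$-invariance of the $cs$-conditionals forces $\tilde\mu_q=c_R\,\mu^u_q$ via \eqref{eq:CPZ}. In short, the paper proves the easy holonomy invariance first and leverages it to obtain the hard one, rather than attacking the hard one head-on.

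A secondary point: your appeal to a Ledrappier--Young identity $h_\mu=h^u_\mu+\gamma\,\lambda^c_\mu$ with the equality case $\gamma=1\Leftrightarrow$ (center conditionals absolutely continuous) needs a precise reference; \cite[Corollary~A.1]{HuHuWu} as used here gives only the inequality. In the paper, absolute continuity of the center conditionals is a \emph{consequence} of the constructed systems (see the final proposition and Remark~\ref{remabs}), not an input.
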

Theorem \ref{theoMarg} should be compared to the construction of the measure of maximal entropy via systems of leaf measures. In the hyperbolic case, this is due to Sinai for maps \cite{Si} and to Margulis for flows \cite{Marg}. Recently, different works have extended the construction of leaf measures to some partially hyperbolic settings, see \S \ref{seccomp} for a comparison with existing works. The Margulis system $(\mu^u_x)_{x\in \mathbb T^3}$ can be used to characterize the measures of maximal unstable entropy. 
\begin{corr}
\label{cor}
Let $f\in \mathcal A_+^{\infty}(\mathbb T^3)$ and $\nu\in \mathcal P_f(\mathbb T^3)$. Then $\nu\in \mathcal M^u(f)$ if and only if the conditionals of $\nu$ along $\mathcal W^u$ are given by $(\mu^u_x)_{x\in \mathbb T^3}$ $($up to a constant rescaling$)$, $\nu$-a.e.
\end{corr}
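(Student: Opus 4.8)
The plan is to deduce the characterization from Theorem \ref{main theo}, Theorem \ref{theoMarg}, and the variational principle \eqref{eq:varia11}, by showing that having the prescribed conditionals is equivalent to attaining the supremum in the definition of $h^u_{\mathrm{top}}(f)$. One direction is essentially already in Theorem \ref{theoMarg}: since $\mu_{J^c_f}\in\mathcal M^u(f)$ and its $u$-conditionals are the Margulis measures $(\mu^u_x)_{x\in\mathbb T^3}$, it remains to propagate this to \emph{every} $u$-MME. Here I would invoke that $\mathcal M^u(f)$ is convex and compact with ergodic extreme points (Hu–Wu–Zhu, \cite[Theorem B]{HuWuZh}); by ergodic decomposition it suffices to treat an ergodic $\nu\in\mathcal M^u(f)$, and then the statement about $\nu$-a.e. conditionals makes sense unambiguously.

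So fix an ergodic $u$-MME $\nu$. The heart of the argument is the ``only if'' direction: I want to show its $u$-conditionals $(\nu^u_x)$ coincide with $(\mu^u_x)$ up to normalization $\nu$-a.e. The key tool is the conformality relation $f^*\mu^u_x=e^{h^u_{\mathrm{top}}(f)}\mu^u_{f^{-1}(x)}$ from \eqref{eq:conf}, which makes the Radon–Nikodym cocycle of the Margulis system explicit. Writing $h=h^u_{\mathrm{top}}(f)$, one has for the Margulis system the pointwise Jacobian identity
\begin{equation}
\label{eq:jacmarg}
\frac{d (f_*\mu^u_{f^{-1}(x)})}{d\mu^u_x}(y)=e^{-h}\cdot \frac{d(f^*\mu^u_x)}{d\mu^u_{f^{-1}(x)}}\big(f^{-1}(y)\big)^{-1},
\end{equation}
i.e. the Margulis conditionals have ``constant logarithmic Jacobian $h$'' under $f$ along unstable leaves. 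On the other hand, by the entropy formula for $h^u_\nu(f)$ (the Ledrappier–Young / Rokhlin-type formula underlying \cite{HuHuWu}), the $u$-conditionals of \emph{any} invariant measure satisfy $h^u_\nu(f)=\int -\ln \mathrm{Jac}^u_\nu(x)\,d\nu(x)$ where $\mathrm{Jac}^u_\nu$ is the Radon–Nikodym derivative of the pushed-forward conditional; and $h^u_\nu(f)=h$ exactly when this Jacobian equals $e^{-h}$ $\nu$-a.e. Thus both $(\nu^u_x)$ and $(\mu^u_x)$ are families of conditional measures along $\mathcal W^u$ with the same $f$-Jacobian $e^{-h}$. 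The final step is a rigidity statement: a system of leaf measures along $\mathcal W^u$ with a prescribed Hölder $f$-Jacobian is unique up to a leafwise multiplicative constant — this follows by comparing the density $\rho_x(y)=d\nu^u_x/d\mu^u_x(y)$, which the two Jacobian identities force to be $f$-invariant along unstable leaves, hence constant on each leaf by expansion and continuity, and then measurable-invariant globally, hence $\nu$-a.e. constant by ergodicity of $\nu$. This yields $(\nu^u_x)=(\mu^u_x)$ up to constant rescaling $\nu$-a.e.

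For the ``if'' direction, suppose $\nu\in\mathcal P_f(\mathbb T^3)$ has $u$-conditionals equal to $(\mu^u_x)$ up to constant rescaling, $\nu$-a.e. Then by \eqref{eq:conf} the $f$-Jacobian of these conditionals is $e^{-h}$ $\nu$-a.e., so the entropy formula gives $h^u_\nu(f)=h=h^u_{\mathrm{top}}(f)$, whence $\nu\in\mathcal M^u(f)$ by \eqref{eq:varia11}. The main obstacle I anticipate is making the disintegration/conditional-measure bookkeeping rigorous: conditional measures along a non-measurable foliation are only defined relative to measurable partitions subordinate to $\mathcal W^u$, so one must check that the Jacobian identity and the uniqueness argument are independent of the chosen subordinate partition and compatible with the local product structure \eqref{eq:localprod} established in Theorem \ref{theoMarg}. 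Once the entropy formula of \cite{HuHuWu} is cited in the sharp ``Jacobian'' form and the uniqueness-up-to-constant of Jacobian-prescribed leaf measures is established (which is where holonomy invariance in \eqref{eq:holhol} is convenient, to glue across plaques), the corollary follows.
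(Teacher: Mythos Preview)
Your proposal is essentially a sketch of the proof of the black-box proposition the paper \emph{cites} rather than proves. In the paper, Corollary~\ref{cor} is obtained in one line at the end of \S\ref{sec3}: once Proposition~\ref{leafmeasure} provides a \emph{measurable} system $(\mu^u_x)$ with full support and the scaling $f^*\mu^u_x=\lambda_u\,\mu^u_{f^{-1}(x)}$, the paper invokes the Buzzi--Fisher--Tahzibi criterion (stated just before, see also \cite[Proposition~5.3]{Ta}) which asserts exactly that $h^u_\nu(f)\le\ln D_u$ with equality iff the $u$-conditionals of $\nu$ equal $(\mu^u_x)$ up to rescaling. Nothing beyond Corollary~\ref{corr12} and Proposition~\ref{leafmeasure} is used.

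This matters for two reasons. First, your plan leans on Theorem~\ref{main theo} and Theorem~\ref{theoMarg} (in particular the holonomy invariance~\eqref{eq:holhol} and the local product structure~\eqref{eq:localprod}), but in the paper's logic Corollary~\ref{cor} is established \emph{before} those results and is in fact \emph{used} in \S\ref{sec5} to prove the holonomy invariance of $(\mu^u_x)$ (see the opening of the proof of Proposition~\ref{propdur}). So invoking \eqref{eq:holhol} or \eqref{eq:localprod} here would be circular; fortunately the Jacobian argument you sketch does not actually need them, only the scaling property from Proposition~\ref{leafmeasure}.

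Second, your rigidity step has a real gap. You form $\rho_x=d\nu^u_x/d\mu^u_x$ and argue it is $f$-invariant hence constant ``by expansion and continuity''. But the conditionals $\nu^u_x$ are only defined $\nu$-a.e.\ and are merely measurable in $x$, so there is no continuity to appeal to, and you have not shown $\nu^u_x\ll\mu^u_x$ in the first place. The correct route (which is what \cite{BFT,Ta} do) is to work with the information function $I(x)=-\ln\nu^u_x(\xi(x))$ for a subordinate partition $\xi$, use Rokhlin's formula $h^u_\nu(f)=\int I\,d\nu$, and compare with the Margulis system via a Jensen-type inequality; equality in Jensen then forces the conditionals to agree. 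Your ``if'' direction is fine once the Rokhlin formula is in hand. In short: the idea is right, but you should either cite the BFT/Tahzibi proposition as the paper does, or replace the density-continuity argument with the convexity/information-function argument that actually establishes it.
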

The author does not know if $\mu_{J^c_f}$ is the unique measure of maximal $u$-entropy or not.
We note that because of the center direction being expanding, the usual Hopf argument does not seem to give the uniqueness as in \cite{CPZ20,BFT,UVYY}.
\subsection{Pollicott-Ruelle resonances}
The initial motivation of this work was to study the \emph{Pollicott-Ruelle spectrum} of $f\in \mathcal A_+^{\infty}(\mathbb T^3)$. For $0\leq k \leq 3$, let $\mathcal D'_{k}(\mathbb T^3)$ denote the space of $k$-currents, see \S \ref{secRuelle} for a definition. Let us define 
\begin{equation}
\label{eq:dual1}
T^*(\mathbb T^3)=E_s^*\oplus E_{cu}^*,\quad E_s^*(E_s)=0, \ E_{cu}^*(E_{cu})=0.
\end{equation}
One can associate to an Anosov map $f$ a discrete spectrum $\mathrm{Res}_k(f)\subset \mathbb C$ : the \emph{Pollicott-Ruelle resonances}. Recall that $\lambda \in \mathrm{Res}_k(f)$ if and only if
\begin{equation}
\label{eq:res}
\exists u\in \mathcal D'_k(\mathbb T^3)\setminus\{0\},\quad f_*u=\lambda u, \ \mathrm{WF}(u)\subset E_{cu}^*. 
\end{equation}
Here, $\mathrm{WF}(u)\subset T^*\mathbb T^3\setminus \{0\}$ denotes the wavefront set of $u$, see \cite[Chapter VIII]{Hor}. In this case, the current $u$ is called a \emph{resonant state} associated to the resonance $\lambda$. We have a dual notion of \emph{co-resonant state}:
\begin{equation}
\label{eq:cores}
\exists u\in \mathcal D'_{3-k}(\mathbb T^3)\setminus\{0\},\quad f^*u=\lambda u, \ \mathrm{WF}(u)\subset E_{s}^*. 
\end{equation}
 We say that $\lambda \in \mathbb R_+$ is the \emph{first resonance for the action on $k$-forms} if $\lambda\in \mathrm{Res}_k(f)$ and $\mathrm{Res}_k(f)\subset \{z\in \mathbb C, |z|\leq \lambda\}.$ For $k=0$ (resp. $k=1$), it is known that the first resonance is given by $1$ (resp. $e^{h_{\mathrm{top}}(f)}$) and that the associated (co)-resonant states can be used to reconstruct the SRB measure (resp. measure of maximal entropy), see \cite{Ba,GouLiv} for Anosov maps and \cite{BC,Hum} for Anosov flows. For $f\in \mathcal A_+^{\infty}(\mathbb T^3)$, it is therefore natural to ask what is the first resonance for the action on $2$-forms and whether the corresponding (co)-resonant states have dynamical meaning or not.
\begin{theo}[First Pollicott-Ruelle resonance]
\label{theores}
Let $f\in \mathcal A_+^{\infty}(\mathbb T^3)$. Then $e^{h^u_{\mathrm{top}}(f)}$ is the first Pollicott-Ruelle resonance for the action on $2$-forms. Moreover, $e^{h^u_{\mathrm{top}}(f)}$ is the only resonance on the critical circle $\{ z\in \mathbb C, |z|=e^{h^u_{\mathrm{top}}(f)}\}$, it does not have Jordan block and the space of resonant $($resp. co-resonant$)$ states is one-dimensional.

Let $\theta$ $($resp. $\nu)$ be a corresponding resonant $($resp. co-resonant$)$ state associated to $  e^{h^u_{\mathrm{top}}(f)}$. Then one has
\begin{equation}
\label{eq:restric}
\forall x\in \mathbb T^3, \quad \theta_{|\mathcal W^{cs}(x)}=\mu^{cs}_x, \ \nu_{|\mathcal W^{u}(x)}=\mu^{u}_x,
\end{equation}
where $g|_{\mathcal N}$ denotes the restriction of a current $g$ to a submanifold $\mathcal N.$\footnote{see for instance \cite[Corollary 8.2.7]{Hor}.}
Finally, the trace of the spectral projector at the resonance $\lambda=e^{h^u_{\mathrm{top}}(f)}$ is given by $\mu_{J^c_f}$.
\end{theo}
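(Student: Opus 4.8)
The plan is to realize $e^{h^u_{\mathrm{top}}(f)}$ and its (co)resonant states directly from the Margulis systems of Theorem~\ref{theoMarg}, then to bound the spectral radius of the transfer operator $f_*$ on $2$-currents using Theorem~\ref{main theo}, and finally to read off simplicity and the shape of the spectral projector from positivity.

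\emph{Step 1: the Margulis systems produce (co)resonant states and \eqref{eq:restric}.} Patch the family $(\mu^{cs}_x)_{x\in\mathbb T^3}$ into a global $2$-current $\theta\in\mathcal D'_2(\mathbb T^3)$ by integrating a test $2$-form over local center-stable plaques against $\mu^{cs}_x$ and summing against a partition of unity subordinate to a foliated cover; symmetrically $(\mu^u_x)_x$ yields a $1$-current $\nu\in\mathcal D'_1(\mathbb T^3)$. The conformality relations \eqref{eq:conf}, after the change of variables $\omega\mapsto f^*\omega$ in the leafwise integrals, become exactly $f_*\theta=e^{h^u_{\mathrm{top}}(f)}\theta$ and $f^*\nu=e^{h^u_{\mathrm{top}}(f)}\nu$. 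The technical core is then to verify the wavefront inclusions $\mathrm{WF}(\theta)\subset E_{cu}^*$ and $\mathrm{WF}(\nu)\subset E_s^*$: this amounts to controlling the regularity of the Margulis leaf-measure families both inside the leaves and transversally, which one does using the conformality \eqref{eq:conf}, the holonomy invariance \eqref{eq:holhol}, and the dominated splitting $E_u\oplus E_c\oplus E_s$ (in particular the regularity of the strong unstable and center-stable holonomies, the strong unstable one being the better behaved since $E_u$ is the dominant bundle). Once this is established, $E_{cu}^*$ (resp.\ $E_s^*$) meets the conormal bundle of $\mathcal W^{cs}$ (resp.\ of $\mathcal W^u$) only at the zero section, so Hörmander's restriction theorem cited in the statement applies and yields \eqref{eq:restric} directly from the construction of $\theta$ and $\nu$; in particular $e^{h^u_{\mathrm{top}}(f)}\in\mathrm{Res}_2(f)$ with nonzero resonant and co-resonant states.

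\emph{Step 2: $e^{h^u_{\mathrm{top}}(f)}$ is the first resonance.} Work in a standard anisotropic Banach space of $2$-currents adapted to $f$, in which $f_*$ has discrete spectrum outside a small essential spectral radius, this discrete spectrum being $\mathrm{Res}_2(f)$. The lower bound on the spectral radius is Step~1, so it remains to bound it above by $e^{h^u_{\mathrm{top}}(f)}$. A Lasota--Yorke estimate bounds the spectral radius by $\limsup_n\tfrac1n\log\sup_x$ of the relevant product of Jacobians of $df^n$; since the expanding bundle for $f_*$ acting on $2$-currents is the full unstable $E_{cu}$ and the action is on $2$-forms, this exponential rate works out to the topological pressure $P(J^c_f)$, which equals $h^u_{\mathrm{top}}(f)$ by Theorem~\ref{main theo}. (Alternatively, any resonance $\mu$ with $|\mu|$ equal to the spectral radius produces a resonant state whose normalized conditionals along $\mathcal W^u$ disintegrate an $f$-invariant probability measure of $u$-entropy at least $\log|\mu|$, so $\log|\mu|\le h^u_{\mathrm{top}}(f)$ by \eqref{eq:varia11}.) Together with Step~1 this makes $e^{h^u_{\mathrm{top}}(f)}$ the first resonance for the action on $2$-forms.

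\emph{Step 3: peripheral spectrum and trace of the projector.} The cone of $2$-currents that restrict to nonnegative measures on center-stable plaques with uniformly controlled transverse variation is invariant under $e^{-h^u_{\mathrm{top}}(f)}f_*$ and contains $\theta$; since $f$ is Anosov on $\mathbb T^3$ and hence topologically mixing, a Perron--Frobenius argument shows that $e^{h^u_{\mathrm{top}}(f)}$ is the only resonance of that modulus, that its resonant space is one-dimensional, and that there is no Jordan block (a nontrivial block would make $e^{-nh^u_{\mathrm{top}}(f)}\|f_*^n\|$ grow, incompatibly with preservation of the cone); the symmetric argument on $1$-currents handles the co-resonant space. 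Normalize $\theta,\nu$ so that $\int_{\mathbb T^3}\theta\wedge\nu=1$ --- the wedge product is well defined because $\mathrm{WF}(\theta)\subset E_{cu}^*$ and $\mathrm{WF}(\nu)\subset E_s^*$ meet only at the zero section, and the integral is nonzero since the resonance is semisimple with leafwise-positive states. Then the spectral projector at $e^{h^u_{\mathrm{top}}(f)}$ is the rank-one operator $u\mapsto\bigl(\int_{\mathbb T^3}u\wedge\nu\bigr)\theta$, whose pointwise trace --- the restriction to the diagonal of its Schwartz kernel --- is the degree-$3$ current $\theta\wedge\nu$. In a local center-stable/strong-unstable product chart this current equals $\mu^{cs}_x\otimes\mu^u_x$ by \eqref{eq:restric}, and since $\mu_{J^c_f}$ has the center-stable/strong-unstable local product structure with precisely those conditionals (last part of Theorem~\ref{theoMarg}), one concludes $\theta\wedge\nu=\mu_{J^c_f}$.

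\emph{Main obstacle.} The two sensitive points are Step~2 --- equating the exponential growth rate of $f_*$ on $2$-currents with $P(J^c_f)=h^u_{\mathrm{top}}(f)$, which is exactly where the strong partial hyperbolicity bites, the relevant unstable being the two-dimensional $E_{cu}$ with a center factor that is expanding but possibly only weakly, so that the crude tensorial bound must be sharpened using the variational principle --- and the transverse-regularity input of Step~1, namely the regularity of the Margulis leaf-measure families along the complementary foliations, for which one leans on the domination of $E_c$ by $E_u$ and on bunching estimates for the holonomies.
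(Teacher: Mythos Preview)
Your approach inverts the paper's logical order. In the paper, the spectral facts of Theorem~\ref{theores} (gathered in Proposition~\ref{prop1st}) are established \emph{first}, by means of dynamical determinants: one shows that the outer zeros of $\mathcal D_{f,2}$ coincide with those of the weighted zeta $\zeta_{f,J^c_f}$, and Ruelle's classical theorem for mixing subshifts then yields simplicity, absence of Jordan block, and uniqueness on the critical circle in one stroke. The identification with $\lambda_u$ comes from the explicit Lefschetz computation of $\zeta_f=\zeta_A$ combined with the factorization $\zeta_f=\prod_k\mathcal D_{f,k}^{(-1)^{k+1}}$. The trace is identified with $\mu_{J^c_f}$ via the Guillemin trace formula and Bowen's periodic-orbit formula, not via local product structure. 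Only \emph{afterwards}, in \S\ref{sec5}, does the paper construct the $cs$-Margulis system by restricting the resonant state $\theta$ to center-stable leaves, prove the $\mathrm{Hol}^{cs}$-invariance of $(\mu^u_x)$ from the resulting product structure of $\mu_{J^c_f}$, and finally package $(\mu^u_x)$ into the co-resonant current with the refined wavefront bound $\mathrm{WF}(\mu^u)\subset E_{cs}^*$.

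The concrete gap in your proposal is therefore circularity. You take Theorem~\ref{theoMarg} and the equality $P(J^c_f)=h^u_{\mathrm{top}}(f)$ from Theorem~\ref{main theo} as input, but in this paper both depend on Proposition~\ref{prop1st}: the $cs$-system you glue into $\theta$ in Step~1 is \emph{defined} as $\theta|_{\mathcal W^{cs}(x)}$; the $\mathrm{Hol}^{cs}$-invariance of $(\mu^u_x)$ you need for $\mathrm{WF}(\nu)\subset E_s^*$ is deduced in Proposition~\ref{propdur} from the spectral description of $\mu_{J^c_f}$; and the pressure equality is obtained in Lemma~\ref{lemmfirst} by locating $\lambda_u$ as a pole of $\zeta_f$ and hence a zero of $\mathcal D_{f,2}$. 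A second, independent problem is in Step~2: a Lasota--Yorke inequality bounds the \emph{essential} spectral radius of $f_{*,2}$ on the anisotropic space, not the spectral radius, so it does not give $\sup\{|z|:z\in\mathrm{Res}_2(f)\}\le e^{P(J^c_f)}$ without further argument; the paper avoids this entirely by reading off the outer zero of $\mathcal D_{f,2}$ from the closed form of $\zeta_A$. Your alternative variational sketch (disintegrating a hypothetical peripheral resonant state along $\mathcal W^u$) is suggestive but would require exactly the regularity of that state along $\mathcal W^u$ that you flag as the main obstacle.
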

In other words, one can recover the unstable topological entropy as a Pollicott-Ruelle resonance. The (co)-resonant states identify with Margulis systems of  measures and $\mu_{J^c_f}$ can be reconstructed as the trace of the spectral projector at the first resonance.
\begin{rem}
\label{rem}
The restrictions in \eqref{eq:restric} are well defined by wavefront set conditions. In the case of the co-resonant state, we show in fact the stronger inclusion $\mathrm{WF}(\nu)\subset E_{cs}^*\subset E_s^*$, see \eqref{eq:WF}, since the condition in \eqref{eq:cores} is a priori not enough to restrict a co-resonant state to a strong unstable manifold $\mathcal W^u(x)$.
\end{rem}
\subsection{Comparison with existing results.}
\label{seccomp}
\subsubsection{Unstable entropy} The topological unstable entropy was shown to be upper semi-continuous by Yang \cite{Ya} and continuous in our setting by Wu \cite{Wu}. That $h^u_{\mathrm{top}}(f)$ is constant in a neighborhood of a linear automorphism $A$ follows from the work of Hua, Saghin and Xia \cite{HSX}. Here, we extend this observation to all Anosov maps $f$ isotopic to $A$. The facts that the unstable topological entropy coincides with the pressure of $J^c_f$ and that $\mu_{J^c_f}\in \mathcal{M}^u(f)$ seem to be new, to the best of the author's knowledge.
\subsubsection{Systems of Margulis measures}

Recently, different works have constructed systems of leaf measures for some classes of partially hyperbolic systems in order to study measures of maximal entropy, and more generally equilibrium states. Most relevant to our paper are the constructions of Climenhaga, Pesin and Zelerowicz \cite{CPZ20}, Parmenter and Pollicott \cite{PP}, Carrasco and Rodriguez-Hertz \cite{CH2}, Buzzi, Fisher and Tazhibi \cite{BFT} and Ures, Viana, Yang and Yang \cite{UVYY}.

We emphasize that Theorem \ref{theoMarg} does not follow from any of the previously cited works. Indeed, for $f\in \mathcal A_+^{\infty}(\mathbb T^3)$, since the center direction $E_c$ is uniformly expanded, $f$ does not satisfy the \emph{Lyapunov stability} condition of \cite{CPZ20,PP}, it is not a \emph{center isometry} as required in \cite{CH2}, nor it is a \emph{flow type diffeomorphism} as in \cite{BFT}. Although any $f\in \mathcal A_+^{\infty}(\mathbb T^3)$ is topologically conjugated to an Anosov automorphism $A$ of $\mathbb T^3$, it does not necessarily \emph{factor over Anosov} as in \cite{UVYY}. Indeed, as shown in \cite{RGZ}, the conjugacy maps the strong unstable foliation of $f$ to that of $A$ if and only if $E_u$ and $E_s$ are jointly integrable. As stated in the introduction, this condition fails for a dense and open subset of $ \mathcal A_+^{\infty}(\mathbb T^3)$. For such $f$, the Margulis system $(\mu^u_x)_{x\in \mathbb T^3}$ cannot be obtained by simply pulling back by the conjugacy the Lebesgue measure of $\mathcal W^u_A$, where $\mathcal W^u_A$ is the strong unstable manifold of $A$. When $f$ factor over Anosov, Corollary \ref{cor} does follow from \cite[Theorem B]{UVYY} but since $f$ does not have \emph{c-mostly contracting center}, \cite[Theorem C]{UVYY} does not apply and the uniqueness of the $u$-MME is open even in this case. For a linear automorphism $A\in \mathcal A_+^{\infty}(\mathbb T^3)$, the foliations are linear and are in particular absolutely continuous. This means that using Corollary \ref{cor} and the standard Hopf argument, one deduces that $A$ has a unique $u$-MME, see for instance \cite[Proposition 3.8]{BFT}.

To the best of the author's knowledge, Theorem \ref{theoMarg} is the first general construction of Margulis systems for a class of partially hyperbolic systems with uniformly expanding center. From a technical point of view, this makes showing the $\mathrm{Hol}^{cs}$-invariance \eqref{eq:holhol} of $(\mu^u_x)_{x\in \mathbb T^3}$ much harder due to the lack of regularity of center stable holonomies. To overcome this difficulty, we combine dynamical ideas inspired by \cite{BFT,CH2,CPZ20} with functional techniques as in \cite{Hum,Hum2}, see the next subsection for an outline of the argument. We note that Alvarez, Leguil, Obata and Santiago studied $u$-Gibbs measures for $f\in \mathcal A_+^{\infty}(\mathbb T^3)$\footnote{Their techniques actually allow them to study $C^2$ diffeomorphisms rather than smooth ones.} in \cite{ALOS}. The lack of regularity of center stable holonomies is also aknowledged as a source of technical difficulty in their work, see \cite[\S 2.2.3]{ALOS}, although they use different techniques to overcome it.

\subsubsection{Pollicott-Ruelle resonances} Theorem \ref{theores} seems to be the first result on the first resonance for Anosov maps for $k\neq 0,d_s$ where $d_s=\mathrm{dim}(E_s)$. The close link between  (co)-resonant states for $k=0,d_s$ and systems of leaf measures for Anosov diffeomorphism is well understood \cite{GouLiv}. This was used by the author in \cite{Hum} to reprove classical facts on the thermodynamical formalism for Anosov flows. 

Recently, the author combined the functional approach for general Anosov actions obtained by Guedes Bonthonneau, Guillarmou, Hilgert, and Weich \cite{GBGHW,GBGW} and the work of Carrasco and Rodriguez-Hertz \cite{CH2} to study the measure of maximal entropy for minimal Anosov actions \cite{Hum2}. The present work can be seen as another instance of partially hyperbolic systems for which certain important dynamical invariant and measure can be reconstructed from the spectral approach.

We note that the functional approach was also used to construct physical and SRB measures for some partially hyperbolic systems, see \cite{CaLi} and the references therein.
\subsection{Structure of the paper}
In \S \ref{preli}, we define the class $\mathcal A_+^{\infty}(\mathbb T^3)$ and recall some facts about the dynamical foliations tangent to the invariant bundles $E_\bullet$, for $\bullet=s,c,u,cs,cu$ and the local product structure they define. We recall as well the definition of the conditional entropy of an invariant measure along the strong unstable foliation and prove a basic lemma about the pressure of the center Jacobian.

In \S \ref{sec3}, we construct the unstable system of Margulis measures $(\mu^u_x)_{x\in \mathbb T^3}$ using a compactness argument inspired by \cite{BFT,CH2}. The compactness used in the argument follows from a precise volume bound \eqref{eq:lowerupper} which is a consequence of the work of Potrie \cite{Po}. We emphasize that since the complementary foliation $\mathcal W^{cs}$ is not contracting, this construction does not suffice to obtain the holonomy invariance \eqref{eq:holhol} of $(\mu^u_x)_{x\in \mathbb T^3}$ as in \cite{BFT,CH2}. Nevertheless, using a Carathéodory construction inspired by Climenhaga, Pesin and Zelerowicz \cite{CPZ20}, we show that $(\mu^u_x)_{x\in \mathbb T^3}$ is measurable using which allows us to deduce Corollary \ref{cor} using an argument of Tahzibi \cite{Ta}.

In \S \ref{sec4}, we recall some facts about Pollicott-Ruelle resonances and show Theorem \ref{theores}, except for \eqref{eq:restric}, using the functional approach. The argument is shorter than in \cite{Hum,Hum2} since $f\in \mathcal A_+^{\infty}(\mathbb T^3)$ is conjugated to a linear model, which we can use to simplify some proofs. We also deduce Theorem \ref{main theo} and Corollary \ref{cor1}.

Finally, \S \ref{sec5} is devoted to the construction of the $cs$-system and the proofs of \eqref{eq:holhol} and \eqref{eq:restric}. We use a combination of functional and dynamical ideas. In summary:
\begin{itemize}
\item the $cs$-system $(\mu^{cs}_x)_{x\in \mathbb T^3}$ is constructed from the resonant state using the functional approach, see Proposition \ref{propdur}. This establishes \eqref{eq:holhol} and \eqref{eq:restric} for $(\mu^{cs}_x)_{x\in \mathbb T^3}$;
\item we show using the functional approach that $\mu_{J^c_f}$ can be obtained as the limit of pullbacks of $(\mu^{cs}_x)_{x\in \mathbb T^3}$, see Proposition \ref{prophol}. This should be compared with \cite[Theorem 4.7 (1)]{CPZ20} and \cite[Theorem 1.1]{PP} although our techniques are quite different;
\item we deduce that $(\mu^{cs}_x)_{x\in \mathbb T^3}$ is equal to the $cs$-conditionals of $\mu_{J^c_f}$ mimicking the dynamical argument of \cite[Lemma 8.1]{CPZ20};
\item using an adaptation of \cite[Lemma 2.12]{CPZ20}, the \emph{local product structure} of $\mu_{J^c_f}$ and Corollary \ref{cor}, we deduce the holonomy invariance \eqref{eq:holhol} of $(\mu^{u}_x)_{x\in \mathbb T^3}$ from the holonomy invariance of $(\mu^{cs}_x)_{x\in \mathbb T^3}$. This last argument seems to be the main novelty compared to previous works and might be of independent interest;
\item finally, we show \eqref{eq:restric} for $(\mu^{u}_x)_{x\in \mathbb T^3}$ by adapting \cite[Lemma 3.2]{Hum}.
\end{itemize}
\textbf{Acknowledgements.} The author would like to first thank Colin Guillarmou and Thibault Lefeuvre for their guidance and advice during the writing of this paper. The author would also like to thank Jérôme Buzzi, Sylvain Crovisier, François LeDrappier, Ali Tahzibi and Amie Wilkinson for answering some questions related to the project.

The author would like to further thank Ali Tahzibi for noticing that the previous argument given in Remark \ref{remabs} was incomplete and Jérôme Buzzi for making him aware of a gap in the previous proof of the measurability of the Margulis system in Proposition \ref{leafmeasure}. These issues  have now been resolved in the current version of the paper. Finally, he would like to thank Ali Tahzibi for pointing out to him that the $u$-MME is unique for linear automorphisms and Gabriel Rivière for pointing out the work \cite{RS}, which lead to Remark \ref{rem:Gabriel}.

This research was supported by the European Research Council (ERC) under
the European Union’s Horizon 2020 research and innovation programme (Grant agreement no. 101162990 — ADG).

\section{Preliminaries}
\label{preli}
\subsection{Anosov diffeomorphism with a partially hyperbolic splitting}
\subsubsection{Definition} 
Let $f\in \mathrm{Diffeo}^{\infty}(\mathbb T^3)$ be a smooth diffeomorphism. We say that $f$ is \emph{Anosov and strongly partially hyperbolic with expanding center} if there exists a continuous splitting of the tangent bundle
$T(\mathbb T^3)=E_u\oplus E_c\oplus E_s,$
which is $df$-invariant and a Riemannian metric $\|.\|$ \emph{adapted to the splitting} such that
$$x\mapsto \lambda_x^\bullet:=\|df(x)|_{E_\bullet}\|,\quad \bullet=s,c,u, $$
are continuous and satisfy $\lambda_x^s<1<\lambda_x^c<\lambda_x^u$ for any $x\in \mathbb T^3$. In particular, $f$ is an Anosov map with stable bundle $E_s$ and unstable bundle $E_{cu}:=E_c\oplus E_u$. The bundles $E_c$ and $E_u$ are respectively called the center and strong unstable bundle of $f$. Finally, $E_{cs}:=E_c\oplus E_s$ is the center stable bundle of $f$. For a detailed introduction to Anosov diffeomorphisms, we refer to \cite{HK}. 

We will denote by $\mathcal A_+^{\infty}(\mathbb T^3)$ the set of such diffeomorphisms which furthermore preserve the orientation of $\mathbb T^3$. It is easily seen to be $C^1$-open and examples are provided by $A\in \mathrm{GL}_3(\mathbb Z)$ with real distincts eigenvalues $\lambda_s<1<\lambda_c<\lambda_u$.
\subsubsection{Invariant manifolds} Let $f\in \mathcal A_+^{\infty}(\mathbb T^3)$. Since $f$ is Anosov, $E_s$ and $E_{cu}$ are integrable to $f$-invariant foliations $\mathcal W^s$ and $\mathcal W^{cu}$ called respectively the stable and unstable foliations of $f$, see \cite[Theorem 6.2.3]{HK}. Moreover, since $E_u$ is the strong unstable foliation of a strongly partially hyperbolic diffeomorphism, it integrates to a $f$-invariant foliation $\mathcal W^u$ called the strong unstable foliation of $f$, see \cite[Theorem 4.1]{Pe}. Actually, by the work of Brin, Burago and Ivanov \cite{BBI} and Potrie \cite{Po}, any $f\in \mathcal A_+^{\infty}(\mathbb T^3)$ is \emph{dynamically coherent}. This means that the center stable bundle $E_{cs}$ also integrates to a $f$-invariant foliation $\mathcal W^{cs}$ called the center stable foliation of $f$.  The collection of leaves $\mathcal W^c(x):=\mathcal W^{cs}(x)\cap \mathcal W^{cu}(x)$ for $x\in \mathbb T^3$ defines the center foliation which integrates the center bundle $E_c$. For any $\bullet=s,c,u,cs,cu$, any $x\in \mathbb T^3$ and any $\delta>0$, let $\mathcal W^\bullet(x,\delta):=\{y\in \mathcal W^\bullet(x) \mid d^\bullet(x,y)<\delta\}$, where $d^\bullet$ is the distance on the leaf.
\subsubsection{Local product structure and rectangles}
\label{sec:delta}
By transversality of the foliations $\mathcal W^{cs}$ and $\mathcal W^u$, there is an $\epsilon_0>0$ and a $C>0$ such that 
\begin{equation}
\label{eq:Bowen}\forall x,y\in \mathbb T^3, \ d(x,y)<\epsilon<\epsilon_0  \ \Rightarrow \ \mathcal W^{cs}_{C\epsilon}(x)\cap \mathcal W^u_{C\epsilon}(y)=\{[x,y]\},
\end{equation}
where $[x,y]$ is the \emph{Bowen bracket} of $x$ and $y$.
\begin{defi}
A closed set $R\subset \mathbb T^3$ is called a \emph{rectangle} if for every $x,y\in R$, the Bowen bracket $[x,y]$ exists and $[x,y]\in R$.
\end{defi}
Rectangles exist because of the local product structure, see \cite[Eq. $(2.2)$]{CPZ20}.
\subsubsection{Holonomies}
\label{secholo}
We now define the center stable and unstable holonomies. Let $R$ be a rectangle and $x\in  R$. We will denote by 
$\mathcal W^{\bullet}_R(x),$ for $ \bullet =cs,u,$
the connected component of $\mathcal W^{\bullet}(x)\cap R$ containing $x$.
For $x,y\in R$, the \emph{strong unstable holonomy} is 
\begin{equation}
\label{eq:holu}
\mathrm{Hol}^u_{x,y}: \mathcal W^{cs}_R(x)\to \mathcal W^{cs}_R(y),\quad \mathrm{Hol}^u_{x,y}(z):=[y,z]\in \mathcal W^{cs}(y)\cap \mathcal W^u(z).
\end{equation}
The \emph{center stable holonomy} is 
\begin{equation}
\label{eq:holcs}
\mathrm{Hol}^{cs}_{x,y}: \mathcal W^{u}_R(x)\to \mathcal W^{u}_R(y),\quad \mathrm{Hol}^{cs}_{x,y}(z):=[z,y]\in \mathcal W^{cs}(z)\cap \mathcal W^u(y).
\end{equation}
Given $x,y\in R$ and $A\subset \mathcal W^{cs}_R(x), B\subset \mathcal W^{cs}_R(y)$, we say that $A$ and $B$ are \emph{$\mathrm{Hol}^u$-equivalent} if $\mathrm{Hol}^u_{x,y}(A)=B$. Given $\delta>0$, we say that $A$ and $B$ are {$\delta$-equivalent} if the size of the holonomy is bounded by $\delta$. We define likewise the notion of \emph{$\mathrm{Hol}^{cs}$-equivalence}. Two functions $\psi\in C^\infty(\mathcal W^{cs}_R(x))$ and $\phi\in C^\infty(\mathcal W^{cs}_R(y))$ are said to be $\mathrm{Hol}^u$-equivalent (resp. $\delta$-equivalent) if their supports are $\mathrm{Hol}^u$-equivalent (resp. $\delta$-equivalent) and $\phi\circ\mathrm{Hol}^u_{x,y}=\psi$ on the support of $\psi$. We define likewise the notions of $\mathrm{Hol}^{cs}$-equivalence (resp. $\delta$-equivalence) for functions on strong unstable leaves.

The strong unstable foliation is \emph{absolutely continuous}, see \eqref{eq:abs} for the exact statement we will need. This fact will be used in the proof of Proposition \ref{propholu} to show the $\mathrm{Hol}^u$-invariance of $(\mu^{cs}_x)_{x\in \mathbb T^3}$. The center stable foliation is not always absolutely continuous and this explains why proving the $\mathrm{Hol}^{cs}$-invariance of $(\mu^{u}_x)_{x\in \mathbb T^3}$ is harder. We refer to \cite[\S 2.2.3]{ALOS} for a more detailed discussion of the lack of regularity of center stable holonomies.
\subsection{Unstable entropy}
\label{secUnstable}
\subsubsection{Conditional measures} We recall some facts about measurable
partitions and conditional measures and refer to \cite{Roh} for more details. Let $(X,\mu)$ be a probability space. For $\xi$ a partition of $X$ into $\mu$-measurable sets, we write $\xi(x)\in \xi$ for the element of the partition containing $x$. 
Let $\xi_1,\xi_2$ be two partitions. We say that $\xi_1$ refines $\xi_2$ denoted $\xi_2\prec \xi_1$ if for $\mu$-a.e. $x\in X$, one has $\xi_1(x)\subset \xi_2(x)$. The \emph{joint} of $\xi_1$ and $\xi_2$ is the partition defined by $\xi_1\vee \xi_2:=\{\xi_1(x)\cap \xi_2(x)\mid x\in X\}$. It is the least fine partition that refines both $\xi_1$ and $\xi_2$. A partition $\xi$ is said to be \emph{measurable} if there is a sequence of finite partitions $\xi_n$ such that
$\xi =\vee_{n=0}^{+\infty}\xi_n. $ 

For a measurable partition $\xi$, Rokhlin showed that there exists a system of \emph{conditional
measures} $(\mu_{x}^\xi)_{x\in X}$ such that:
\begin{itemize}
\item for any $x\in X$, $\mu_x^\xi$ is a probability measure on $\xi(x)$,
\item if $\xi(x)=\xi(y)$, then $\mu_x^\xi=\mu_y^\xi$,
\item for any $\psi \in L^1(X,\mu)$, one has
\begin{equation}
\label{eq:conditional}
\int_X\psi d\mu=\int_X\int_{\xi(x)}\psi(z) d\mu^\xi_x(z)d\mu(x).
\end{equation}
\end{itemize}
Moreover, the system of conditional measures is unique $\mu$-mod $0$, that is, two systems of conditional measures coincide for $\mu$-a.e. $x\in X$.

The main example we will consider is $X=R$ a rectangle and $(\xi(x))_{x\in R}=(\mathcal W^{cs}_R(x))_{x\in R}$ the partition into center stable manifolds. This partition is easily seen to be measurable. For this partition, we will write $\mu^{cs}_x=\mu_x^\xi$ and refer to $(\mu^{cs}_x)_{x\in R}$ as the \emph{conditionals of $\mu$ on the center stable manifolds}. For $q\in R$, define a measure $\tilde \mu_q$ on $\mathcal W^u_R(q)$ by
\begin{equation}
\label{eq:tildemu}
\forall E\subset \mathcal W^u_R(q),\quad \tilde \mu_q(E):=\mu(\cup_{y\in E}\mathcal W^{cs}_R(y)).
\end{equation}
Then \eqref{eq:conditional} can be rewritten for any $q\in R$ as,
\begin{equation}
\label{eq:CPZ}
\forall \phi\in L^1(R,\mu),\quad \mu(\phi)=\int_{\mathcal W^u_R(q)}\int_{\mathcal W^{cs}_R(y)}\phi(z)d\mu^{cs}_y(z)d\tilde \mu^{u}_q(y),
\end{equation}
see \cite[Equation $(2.6)$]{CPZ20}. One defines the \emph{conditionals of $\mu$ on the strong unstable manifolds} $(\mu^{u}_x)_{x\in R}$ by replacing $cs$ by $u$ in the previous definition. Although this is not explicit in the notations, the conditionals  depend on the rectangle $R$. Note however that the ambiguity only consists in a normalizing constant, see \cite[Lemma 2.10]{CPZ20}.
\subsubsection{Metric unstable entropy}
We now recall the definition of the \emph{metric unstable entropy}. For further details, we refer to \cite{HuHuWu}. In this subsection, we consider $f\in \mathcal A_+^{\infty}(\mathbb T^3)$ and $\mu\in \mathcal P_f(\mathbb T^3)$ a  $f$-invariant probability measure.
For a partition $\xi$, define its diameter to be $\mathrm{diam}(\xi):=\sup_{x\in \mathbb T^3}\mathrm{diam}(\xi(x))$. Fix a small $\epsilon_0>0$ and let $\mathcal Q$ be the set of $\mu$-measurable partitions of diameter less than $\epsilon_0$. For $\xi \in \mathcal Q$, we define a finer partition $\eta=:\mathcal Q^u(\xi)$ \emph{subordinated to the unstable foliation} by defining $\eta(x)$ to be the connected component of $\xi(x)\cap \mathcal W^u(x)$ containing $x$, for any $x\in \mathbb T^3$. Let $\mathcal Q^u$ denote the set of all partitions obtained this way.
The \emph{conditional entropy of $\xi$ given $\eta $ with respect to $\mu$} is given by
$$H_\mu(\xi\mid \eta):=-\int_{\mathbb T^3}\ln (\mu^{\eta}_x(\xi(x)))d\mu(x). $$
For $m,n\in \mathbb Z$, define $\xi_{m}^n:=\vee_{i=m}^n f^{-i}\xi$. The \emph{conditional entropy of $f$ with respect to a measurable partition $\xi$ given $\eta$} is 
$$h_\mu(f,\xi\mid \eta)=\limsup_{n\to+\infty}\frac 1n  H_\mu(\xi_0^n\mid \eta).$$
Suppose furthermore that $\mu$ is ergodic, then the \emph{unstable metric entropy of $f$ with respect to $\mu$} is defined to be
\begin{equation}
\label{eq:hmuu}
\forall \xi\in \mathcal Q, \ \forall \eta \in \mathcal Q^u, \quad h^u_\mu(f):=h_\mu(f,\xi\mid \eta),
\end{equation}
where the value does not depend on $\xi$ or $\eta$, see \cite[Theorem A]{HuHuWu}.

\subsubsection{Topological unstable entropy and variational principle.} The topological counter part of the metric unstable entropy is the \emph{topological unstable entropy}. It was introduced by Saghin and Xia \cite{SaXi}. It is equal to the exponential growth rate of the volume of strong unstable balls:
\begin{equation}
\label{eq:hutop1}
\forall x\in \mathbb T^3, \ \forall \delta>0, \quad h^u_{\mathrm{top}}(f):=\limsup_{n\to+\infty}\frac 1n \ln \mathrm{Vol}\big(f^n(\mathcal W^u(x,\delta))\big),
\end{equation}
where $\mathrm{Vol}$ is the induced volume by the Riemannian metric. The limit does not depend on either, $x\in \mathbb T^3$, $\delta>0$ or the choice of the Riemannian metric. Hu, Hua and Wu \cite[Definition 1.4]{HuHuWu} gave an alternative definition of $h^u_{\mathrm{top}}(f)$ in terms of spanning or separated sets which we will not need in this paper. The link between the two notions of unstable entropies is given by the {variational principle} \eqref{eq:varia11}, see \cite[Theorem D]{HuHuWu}. A measure $\mu \in \mathcal P_f(\mathbb T^3)$ is a \emph{measure of maximal unstable entropy} (or $u$-MME) if $h^u_{\mathrm{top}}(f)=h^u_\mu(f)$. Their set is denoted by $\mathcal M^u (f)$. By the work of Hu, Wu and Zhu \cite[Theorem B]{HuWuZh}, the set $\mathcal M^u(f)$ is always non-empty, convex and compact. Moreover, its extreme points are exactly the ergodic $u$-MMEs.
\subsection{Center Jacobian}
In this subsection, we define the \emph{center Jacobian}. Let
\begin{equation}
\label{eq:Jc}
J^c_f\in C^{\alpha}(\mathbb T^3,\mathbb R),\ \forall x\in \mathbb T^3, \quad J_f^c(x):=-\ln \|df_x|_{E_c(x)}\|,
\end{equation}
where $\|.\|$ denotes the adapted Riemannian metric and $\alpha>0$.
By the thermodynamical formalism (see \cite[Theorem 20.3.7]{HK}), there is a unique \emph{equilibrium state} $\mu_{J^c_f}$ associated to $J_f^c$. It is the unique invariant probability measure such that
\begin{equation}
\label{eq:varia}
h_{\mu_{J^c_f}}(f)+\int_{\mathbb T^3}J^c_f(p)d\mu_{J^c_f}(p)=P(J^c_f):=\sup_{\mu \in \mathcal P_f(\mathbb T^3)}h_{\mu}(f)+\int_{\mathbb T^3}J^c_f(p)d\mu(p),
\end{equation}
where $P(J^c_f)$ is the pressure of the center Jacobian and $h_{\mu}(f)$ is the metric entropy of the map $f$ with respect to $\mu$. We  show an inequality between $h^u_{\mathrm{top}}(f)$ and $P(J^c_f)$.
\begin{lemm}
Let $f\in\mathcal A_+^{\infty}(\mathbb T^3)$. Then one has
\begin{equation}
\label{eq:ineq}
h^u_{\mathrm{top}}(f)\geq P(J^c_f).
\end{equation}
\end{lemm}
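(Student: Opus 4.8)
The plan is to prove \eqref{eq:ineq} at the level of invariant measures, using the two variational principles \eqref{eq:varia11} and \eqref{eq:varia}. Since $\mu\mapsto h_\mu(f)$ and $\mu\mapsto\int_{\mathbb T^3}J^c_f\,d\mu$ are affine, the ergodic decomposition shows that the supremum defining $P(J^c_f)$ in \eqref{eq:varia} is attained along ergodic measures; hence it suffices to prove
\[
h_\mu(f)+\int_{\mathbb T^3}J^c_f\,d\mu\ \le\ h^u_\mu(f)\qquad\text{for every ergodic }\mu\in\mathcal P_f(\mathbb T^3),
\]
because the right-hand side is then $\le h^u_{\mathrm{top}}(f)$ by \eqref{eq:varia11}.

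Fix an ergodic $\mu$ and set $\lambda^c_\mu:=\int_{\mathbb T^3}\log\|df|_{E_c}\|\,d\mu$. Since $E_c$ is one-dimensional and $df$-invariant, $\|df^n_x|_{E_c}\|=\prod_{k=0}^{n-1}\|df_{f^kx}|_{E_c}\|$, so Birkhoff's ergodic theorem gives $\tfrac1n\log\|df^n_x|_{E_c}\|\to\lambda^c_\mu$ for $\mu$-a.e.\ $x$; thus $\lambda^c_\mu$ is the center Lyapunov exponent of $\mu$, it is strictly positive because the metric is adapted to the splitting, and $\int_{\mathbb T^3}J^c_f\,d\mu=-\lambda^c_\mu$ by \eqref{eq:Jc}. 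The inequality to be proved is therefore equivalent to the entropy comparison $h_\mu(f)-h^u_\mu(f)\le\lambda^c_\mu$.

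This comparison is the only nontrivial ingredient, and I would obtain it from the Ledrappier--Young entropy formula applied to the nested unstable foliations $\mathcal W^u\subset\mathcal W^{cu}$. Writing $\lambda^u_\mu$ for the unstable Lyapunov exponent of $\mu$ (defined like $\lambda^c_\mu$, with $E_u$ in place of $E_c$), $\delta_u\in[0,1]$ for the dimension of the conditional measures of $\mu$ along $\mathcal W^u$, and $\delta_c\in[0,1]$ for the transverse partial dimension inside $\mathcal W^{cu}$, one has $h_\mu(f)=\lambda^u_\mu\,\delta_u+\lambda^c_\mu\,\delta_c$ (recall that for an Anosov map the whole entropy is carried by the unstable foliation $\mathcal W^{cu}$, see \cite{HK}), while the Hu--Hua--Wu unstable entropy \cite{HuHuWu}, being the entropy of a partition subordinate to $\mathcal W^u$, equals the corresponding partial entropy $h^u_\mu(f)=\lambda^u_\mu\,\delta_u$; subtracting and using $\lambda^c_\mu>0$ together with $\delta_c\le\dim E_c=1$ gives $h_\mu(f)-h^u_\mu(f)=\lambda^c_\mu\,\delta_c\le\lambda^c_\mu$. (Alternatively, one can combine the additivity $h_\mu(f)=h_\mu(f,\mathcal W^u)+h_\mu(f,\mathcal W^{cu}\mid\mathcal W^u)$ with Ruelle's inequality for the one-dimensional quotient dynamics on $\mathcal W^{cu}/\mathcal W^u$, whose expansion rate is $\lambda^c_\mu$.) Combining this with the previous paragraph yields $h_\mu(f)+\int_{\mathbb T^3}J^c_f\,d\mu\le h^u_\mu(f)\le h^u_{\mathrm{top}}(f)$, and taking the supremum over ergodic $\mu$ proves \eqref{eq:ineq}. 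I expect the only delicate point to be matching the Hu--Hua--Wu definition of $h^u_\mu(f)$ with the Ledrappier--Young partial entropy along $\mathcal W^u$ (so that indeed $h^u_\mu(f)=\lambda^u_\mu\,\delta_u$) and checking that the $C^{1+\alpha}$ hypotheses of the entropy formula hold, which they do since $f$ is smooth.
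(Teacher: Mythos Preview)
Your proof is correct and rests on the same key inequality as the paper's, namely $h_\mu(f)\le h^u_\mu(f)+\lambda^c_\mu$ for an ergodic invariant measure $\mu$. The paper, however, takes a shorter path: it applies this inequality only to the single measure $\mu=\mu_{J^c_f}$ (which is ergodic and realizes the pressure in \eqref{eq:varia}), and rather than deriving the inequality via Ledrappier--Young it quotes it directly as \cite[Corollary~A.1]{HuHuWu}. From there one gets immediately $h^u_{\mu_{J^c_f}}(f)\ge h_{\mu_{J^c_f}}(f)+\int J^c_f\,d\mu_{J^c_f}=P(J^c_f)$ and then bounds by $h^u_{\mathrm{top}}(f)$. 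Your route---proving the inequality for every ergodic $\mu$ via the Ledrappier--Young partial entropies and then taking a supremum---is a valid alternative and makes the mechanism behind the HuHuWu corollary explicit; the cost is that the identification of $h^u_\mu(f)$ with the Ledrappier--Young partial entropy $\lambda^u_\mu\,\delta_u$ (which you correctly flag as the delicate point) is itself essentially the content of that corollary, so you end up reproving what the paper cites. Working only with the equilibrium state also avoids the detour through the ergodic decomposition.
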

\begin{proof}
We use \cite[Corollary A.1]{HuHuWu} which states that 
\begin{equation}
\label{eq:HuHuWu}
h_{\mu_{J^c_f}}(f)\leq h^u_{\mu_{J^c_f}}(f)+\lambda_c(f,\mu_{J^c_f}), 
\end{equation}
where  $\lambda_c(f,\mu_{J^c_f})$ is the central Lyapunov exponent of $\mu_{J^c_f}$. Recall that it satisfies
\begin{equation*}
\text{for } \mu_{J^c_f}\text{-a.e } x\in \mathbb T^3, \ \forall v\in E_c(x), \  \quad \lim_{n\to+\infty}\frac 1n \ln \|df^n_x(v)\|=\lambda_c(f, \mu_{J^c_f}).
\end{equation*}
Now, the measure $\mu_{J^c_f}$ is ergodic so by the ergodic theorem and the chain rule, 
$$ \text{for } \mu_{J^c_f}\text{-a.e } x\in \mathbb T^3,\quad \lim_{n\to+\infty}-\frac1n \ln \|df^n_x|_{E_c(x)}\|=\lim_{n\to+\infty}\frac 1n \sum_{k=0}^{n-1}J^c_f(f^kx)=\int_{\mathbb T^3}J^c_f d\mu_{J^c_f}.$$
Plugging this into \eqref{eq:HuHuWu} and using \eqref{eq:varia} yields
\begin{equation}
\label{eq:uvaria}
h^u_{\mu_{J^c_f}}(f)\geq h_{\mu_{J^c_f}}(f)-\lambda_c(f,\mu_{J^c_f})=h_{\mu_{J^c_f}}(f)+\int_{\mathbb T^3}J^c_fd\mu_{J^c_f}=P(J^c_f).
\end{equation}
Using $h^u_{\mu_{J^c_f}}(f)\leq h^u_{\mathrm{top}}(f)$ gives \eqref{eq:ineq}.
\end{proof}

\section{Construction of a Margulis system of measures}
\label{sec3}
Let $f\in\mathcal A_+^{\infty}(\mathbb T^3)$. In this section, we construct a system of $u$-measures for $f$. Recall that there exists a Hölder continuous homeomorphism $h:\mathbb T^3\to \mathbb T^3$ and a hyperbolic matrix $A\in \mathrm{GL}_3(\mathbb Z)$ such that
$ h\circ f=A\circ h, $
see for instance \cite[Theorem 18.6.1]{HK}. The matrix $A$ has three distinct and real eigenvalues $\lambda_s<1<\lambda_c<\lambda_u$ and the corresponding eigenlines are denoted by $E_s^A$, $E_c^A$ and $E_u^A$ respectively. We have $A\in \mathcal A_+^{\infty}(\mathbb T^3)$ and the corresponding foliations are given by $\mathcal W^\bullet_A(x)=x+E_\bullet^A$, for $\bullet=s,c,u$.

Let $H$, $F$ and $A$ denote a lift to $\mathbb R^3$ of $h,f$ and $A$ respectively. For a foliation $\mathcal N$ of $ \mathbb T^3$, we denote by $\widetilde{\mathcal N}$ the lifted foliation of $\mathbb R^3$. Then one has: 
\begin{itemize}
\item the conjugacy lifts to $\mathbb R^3$, that is,
\begin{equation}
\label{eq:conj2}
H\circ F=A\circ H;
\end{equation}
\item  the conjugacy $H$ is isotopic to $\mathrm{Id}$. In particular, there exists $L>0$ such that
\begin{equation}
\label{eq:L}
\|H-\mathrm{Id}\|_{\infty}:=\sup_{x\in \mathbb R^3} d(H(x),x)\leq L,
\end{equation}
where $d$ denotes the Euclidean distance on $\mathbb R^3$;
\item  for any $\bullet=s,c,cs,cu$, one has
\begin{equation}
\label{eq:foliations}
\forall \tilde x\in \mathbb R^3,\quad H\big(\widetilde{ \mathcal W_f^\bullet}(\tilde x)\big)=\widetilde{\mathcal W^\bullet_A}(H(\tilde x));
\end{equation}
\item one has $ H\big(\widetilde{ \mathcal W_f^u}(\tilde x)\big)=\widetilde{\mathcal W^u_A}(H(\tilde x))$ for any $\tilde x\in \mathbb R^3$ if and only if $E_s$ and $E_u$ are jointly integrable.
\end{itemize}
The first three facts were proven by Potrie in \cite{Po} and the last one was proven by Ren, Gan and Zhang in \cite{RGZ}.
Note that the fact that $E_s$ and $E_u$ are \emph{not} jointly integrable is an open and dense condition in $\mathcal A_+^{\infty}(\mathbb T^3)$, see \cite{DW,BRHRHTU,Di}.

We first show the following uniform volume bounds.
\begin{prop}
\label{Mainprop}
Let $\delta>0$. There exists $C_1(\delta),C_2(\delta)>0$ such that for any $\tilde x\in \mathbb R^3$,
\begin{equation}
\label{eq:lowerupper}
\forall n\in \mathbb N,\quad C_1(\delta)\leq \lambda_u^{-n}\mathrm{Vol}(F^n \widetilde{\mathcal W}_f^u(\tilde x,\delta))\leq C_2(\delta).
\end{equation}
\end{prop}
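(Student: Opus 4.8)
The plan is to transfer the length computation to the linear model through $F^n=H^{-1}A^nH$, using two inputs from the work of Potrie (building on Brin--Burago--Ivanov): first, that $H$ sends the lifted center-unstable leaves of $f$ (and likewise the center and center-stable leaves) onto the corresponding \emph{affine} leaves of $A$, as recorded in \eqref{eq:foliations}; second, that the lifted strong unstable foliation $\widetilde{\mathcal W}^u_f$ is \emph{quasi-isometric}, i.e.\ there are $a\ge 1$, $b\ge 0$ with
\begin{equation*}
a^{-1}d(\tilde x,y)-b\ \le\ d^u(\tilde x,y)\ \le\ a\,d(\tilde x,y)+b,\qquad \tilde x\in\mathbb R^3,\ \ y\in\widetilde{\mathcal W}^u_f(\tilde x),
\end{equation*}
where $d$ is the lifted adapted distance and $d^u$ the intrinsic leaf distance. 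Since $\dim E_u=1$, $\mathrm{Vol}(F^n\widetilde{\mathcal W}^u_f(\tilde x,\delta))$ is the length of an arc, and if $y_{\pm}$ are the two endpoints of $\widetilde{\mathcal W}^u_f(\tilde x,\delta)$ this length equals $d^u(F^n\tilde x,F^ny_-)+d^u(F^n\tilde x,F^ny_+)$. It therefore suffices to bound $d^u(F^n\tilde x,F^ny)$ both above and below by $\mathrm{const}(\delta)\cdot\lambda_u^n$, uniformly over $\tilde x\in\mathbb R^3$ and $y\in\widetilde{\mathcal W}^u_f(\tilde x)$ with $d^u(\tilde x,y)=\delta$. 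Throughout we use \eqref{eq:conj2}, \eqref{eq:L} (so $|d(Hp,Hq)-d(p,q)|\le 2L$ and $H$ is $\mathbb Z^3$-equivariant), and the fact that for $v=v_c+v_u\in E^A_{cu}=E^A_c\oplus E^A_u$ one has $|A^nv|=|\lambda_c^nv_c+\lambda_u^nv_u|$, which is comparable to $\lambda_c^n|v_c|+\lambda_u^n|v_u|$ with constants depending only on the fixed angle between $E^A_c$ and $E^A_u$.

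\emph{Upper bound.} Let $d^u(\tilde x,y)\le\delta$. Quasi-isometry gives $d(\tilde x,y)\le a(\delta+b)$, hence $d(H\tilde x,Hy)\le a(\delta+b)+2L=:R(\delta)$. Since $y\in\widetilde{\mathcal W}^u_f(\tilde x)\subset\widetilde{\mathcal W}^{cu}_f(\tilde x)$ and $H$ maps the latter onto the affine plane $H\tilde x+E^A_{cu}$, we get $Hy-H\tilde x\in E^A_{cu}$; as $\lambda_c,\lambda_u>1$ this yields $|A^n(Hy-H\tilde x)|\le C\lambda_u^nR(\delta)$ with $C$ depending only on $A$. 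Therefore $d(F^n\tilde x,F^ny)\le\lambda_u^n(CR(\delta)+2L)$, and quasi-isometry applied to the image leaf gives $d^u(F^n\tilde x,F^ny)\le\lambda_u^n\big(a(CR(\delta)+2L)+b\big)$. Summing the two endpoint contributions yields the upper bound with $C_2(\delta):=2\big(a(CR(\delta)+2L)+b\big)$.

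\emph{Lower bound.} Let $\pi_u$ be the linear projection of $\mathbb R^3$ onto $E^A_u$ along $E^A_{cs}=E^A_c\oplus E^A_s$; for $y\in\widetilde{\mathcal W}^u_f(\tilde x)$ set $v_u(\tilde x,y):=\pi_u(Hy)-\pi_u(H\tilde x)$, which is exactly the $E^A_u$-component of $Hy-H\tilde x\in E^A_{cu}$. The crucial claim is $m(\delta):=\inf\{|v_u(\tilde x,y)| : \tilde x\in\mathbb R^3,\ y\in\widetilde{\mathcal W}^u_f(\tilde x),\ d^u(\tilde x,y)=\delta\}>0$. First, $\pi_u\circ H$ is injective along each strong unstable leaf: if $\pi_u(Hy_1)=\pi_u(Hy_2)$ with $y_1,y_2$ on a common $\widetilde{\mathcal W}^u_f$-leaf, then $Hy_1,Hy_2$ lie on a common affine $\widetilde{\mathcal W}^{cs}_A$-leaf, hence by \eqref{eq:foliations} on a common $\widetilde{\mathcal W}^{cs}_f$-leaf, and since a strong unstable leaf meets a center stable leaf in at most one point (dynamical coherence, see \cite{Po}) we get $y_1=y_2$. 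So $|v_u(\tilde x,y)|>0$ whenever $y\neq\tilde x$. Next, by $\mathbb Z^3$-equivariance of $H$ the map $(\tilde x,y)\mapsto|v_u(\tilde x,y)|$ descends to the compact space obtained from $\{d^u(\tilde x,y)=\delta\}$ by reducing $\tilde x$ to a fundamental domain; it is continuous (the foliation varies continuously) and strictly positive there, so $m(\delta)>0$. Then, for $d^u(\tilde x,y)=\delta$, $|A^n(Hy-H\tilde x)|\ge c_0\lambda_u^n|v_u(\tilde x,y)|\ge c_0m(\delta)\lambda_u^n$, whence $d^u(F^n\tilde x,F^ny)\ge a^{-1}\big(c_0m(\delta)\lambda_u^n-2L\big)-b\ge c_1(\delta)\lambda_u^n$ for all $n\ge N(\delta)$. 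For the finitely many $n<N(\delta)$, note that for each such $n$ the map $\tilde x\mapsto\lambda_u^{-n}\mathrm{Vol}(F^n\widetilde{\mathcal W}^u_f(\tilde x,\delta))$ is continuous, $\mathbb Z^3$-periodic and strictly positive (the image of a nondegenerate arc under the diffeomorphism $F^n$ has positive length), hence bounded below; taking a minimum produces $C_1(\delta)>0$.

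The main obstacle is the lower bound, specifically the positivity $m(\delta)>0$: one must rule out that the $E^A_u$-component of $Hy-H\tilde x$ degenerates uniformly in $\tilde x$, and this rests on the product structure of center-unstable leaves (so that $\pi_u\circ H$ is injective along strong unstable leaves) together with a compactness argument via the $\mathbb Z^3$-equivariance of $H$. The other essential ingredient, used in both bounds to pass between intrinsic and ambient distances despite $H^{-1}$ being merely Hölder, is the quasi-isometry of $\widetilde{\mathcal W}^u_f$ coming from Potrie's work.
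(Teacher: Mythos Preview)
Your proof is correct and follows essentially the same route as the paper: both use Potrie's quasi-isometry of $\widetilde{\mathcal W}^u_f$ to trade arc-length for ambient diameter, transfer to the linear model via $HF=AH$ and $\|H-\mathrm{Id}\|_\infty\le L$, get the upper bound from $\|A^n\|\lesssim\lambda_u^n$, and get the lower bound by projecting onto $E^A_u$ and invoking a uniform positivity by compactness. The only cosmetic differences are that the paper projects parallel to $E^A_c$ within the affine $cu$-plane (equivalent to your projection parallel to $E^A_{cs}$, since the relevant vectors already lie in $E^A_{cu}$) and states the positivity more tersely as ``transversality of $E_c$ and $E_u$, continuity, compactness,'' whereas you spell out the injectivity of $\pi_u\circ H$ along strong unstable leaves via the global product structure and handle the finitely many small $n$ explicitly.
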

\begin{proof}
By Potrie  \cite[Corollary 7.7]{Po}, the strong unstable foliation is quasi-isometric:
\begin{equation}
\label{eq:eqeq}
\exists C>0, \ \forall \tilde x\in \mathbb R^3, \ \forall \tilde y,\tilde z\in \widetilde W_f^u(\tilde x),\quad d(\tilde y,\tilde z)\leq d^u(\tilde y,\tilde z)\leq Cd(\tilde y,\tilde z)+C,
\end{equation}
where $d$ is the Euclidean distance on $\mathbb R^3$ and $d^u$ denotes the induced distance on $\widetilde W_f^u(\tilde x)$.
Since $F^n \widetilde{\mathcal W}_f^u(\tilde x,\delta)$ is one dimensional and connected, its volume is equal to its $d^u$-length. As a consequence, \eqref{eq:eqeq} gives
$$\mathrm{diam}(F^n \widetilde{\mathcal W}_f^u(\tilde x,\delta))\leq \mathrm{Vol}(F^n \widetilde{\mathcal W}_f^u(\tilde x,\delta))\leq C\mathrm{diam}(F^n \widetilde{\mathcal W}_f^u(\tilde x,\delta))+C, $$
where the diameter is taken with respect to $d$.
Now, using \eqref{eq:L}, we obtain
$$ \mathrm{diam}(H\circ F^n \widetilde{\mathcal W}_f^u(\tilde x,\delta))-L\leq \mathrm{Vol}(F^n \widetilde{\mathcal W}_f^u(\tilde x,\delta))\leq C\mathrm{diam}(H\circ F^n \widetilde{\mathcal W}_f^u(\tilde x,\delta))+C(1+L). $$
Using \eqref{eq:conj2}, this then implies
$$ \mathrm{diam}(A^n\circ H \widetilde{\mathcal W}_f^u(\tilde x,\delta))-L\leq \mathrm{Vol}(F^n \widetilde{\mathcal W}_f^u(\tilde x,\delta))\leq C\mathrm{diam}(A^n\circ H \widetilde{\mathcal W}_f^u(\tilde x,\delta))+C(1+L). $$
Since $\lambda_u$ is simple and the largest eigenvalue of $A$, there is $K>0$ such that 
for any $ \tilde x,\tilde y\in \mathbb R^3,$ one has $\ \|A^n(\tilde x-\tilde y)\|\leq K\lambda_u^n\|\tilde x-\tilde y\|$. In particular, using \eqref{eq:L},
$$C\mathrm{diam}(A^n\circ H \widetilde{\mathcal W}_f^u(\tilde x,\delta))\leq K\lambda_u^n\mathrm{diam}(H \widetilde{\mathcal W}_f^u(\tilde x,\delta))\leq K(2\delta+L)\lambda_u^n.$$
This gives the upper bound in \eqref{eq:lowerupper}.
For $\tilde x\in \mathbb R^3,$ let $\pi_{\tilde x}^u:\mathbb R^3\to E_u^A(\tilde x)$ be the projection parallel to $E_{c}^A(\tilde x)$. Then there exists $c(\delta)>0$ such that for any $ \tilde x\in \mathbb R^3,$
\begin{equation}
\label{eq:lower}
\mathrm{diam}\big(\pi^u_{\tilde x}\big(H \widetilde{\mathcal W}_f^u(\tilde x,\delta)\big)\big)\geq c(\delta).
\end{equation}
Indeed, this follows from the transversality of $E_c$ and $E_u$, the continuity of the bundles and the compactness of $\mathbb T^3.$
In particular, one has
$$\mathrm{diam}(A^n\circ H \widetilde{\mathcal W}_f^u(\tilde x,\delta))\geq \mathrm{diam}\big(\pi_{\tilde x}^u\big(H \widetilde{\mathcal W}_f^u(\tilde x,\delta)\big)\big) \lambda_u^n \geq c(\delta)\lambda_u^n.$$
This gives the lower bound in \eqref{eq:lowerupper}.
\end{proof}
Combining \eqref{eq:hutop1} and \eqref{eq:lowerupper}, and since the volume growth on the lifted foliation is equal to the volume growth on the strong unstable foliation, we obtain:
\begin{corr}
\label{corr12}
For any $f\in\mathcal A_+^{\infty}(\mathbb T^3)$, one has
\begin{equation}
\label{eq:hutop}
e^{h^u_{\mathrm{top}}(f)}=\lambda_u.
\end{equation}
\end{corr}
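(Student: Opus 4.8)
The plan is to take logarithms in the two-sided volume estimate \eqref{eq:lowerupper} of Proposition \ref{Mainprop} and let $n\to\infty$, after first identifying the volume growth of strong unstable leaves on $\mathbb{T}^3$ with the volume growth of the lifted leaves on $\mathbb{R}^3$ that appears in \eqref{eq:lowerupper}.

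First I would fix $x\in\mathbb{T}^3$, choose a lift $\tilde x\in\mathbb{R}^3$, and fix $\delta>0$. Let $p\colon\mathbb{R}^3\to\mathbb{T}^3$ denote the covering projection, so that $p\circ F^n=f^n\circ p$ and $p$ is a local isometry. Since the strong unstable leaves are simply connected, $p$ restricts to a Riemannian isometry of the leaf $\widetilde{\mathcal W}^u_f(\tilde x)$ onto $\mathcal W^u(x)$ sending $\widetilde{\mathcal W}^u_f(\tilde x,\delta)$ onto $\mathcal W^u(x,\delta)$; consequently $F^n$ restricted to $\widetilde{\mathcal W}^u_f(\tilde x,\delta)$ is, up to this leafwise isometry, the same map as $f^n$ restricted to $\mathcal W^u(x,\delta)$, and in particular has the same unstable Jacobian at corresponding points. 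As $\mathrm{Vol}$ denotes the induced Riemannian volume, this gives
$$\mathrm{Vol}\big(F^n\widetilde{\mathcal W}^u_f(\tilde x,\delta)\big)=\mathrm{Vol}\big(f^n(\mathcal W^u(x,\delta))\big)\qquad\text{for every }n\in\mathbb{N}.$$

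Next I would substitute this identity into \eqref{eq:lowerupper}, apply $\tfrac1n\ln(\cdot)$ to all three terms, and obtain
$$\frac1n\ln C_1(\delta)+\ln\lambda_u\ \leq\ \frac1n\ln\mathrm{Vol}\big(f^n(\mathcal W^u(x,\delta))\big)\ \leq\ \frac1n\ln C_2(\delta)+\ln\lambda_u.$$
Letting $n\to\infty$, the boundary terms $\tfrac1n\ln C_i(\delta)$ tend to $0$, so the middle quantity converges to $\ln\lambda_u$; by the definition \eqref{eq:hutop1} of $h^u_{\mathrm{top}}(f)$, which is independent of the choice of $x$ and $\delta$, this yields $h^u_{\mathrm{top}}(f)=\ln\lambda_u$, that is, $e^{h^u_{\mathrm{top}}(f)}=\lambda_u$.

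All of the substance is contained in Proposition \ref{Mainprop}, so there is no genuine obstacle here; the only point deserving a word of care is the identification of the two volume growths, where one uses that the covering map $\mathbb{R}^3\to\mathbb{T}^3$ is a leafwise isometry, so that the quantities entering \eqref{eq:hutop1} and \eqref{eq:lowerupper} agree.
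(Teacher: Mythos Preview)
Your proof is correct and follows exactly the approach of the paper, which states the corollary as an immediate consequence of combining \eqref{eq:hutop1} with \eqref{eq:lowerupper} after noting that ``the volume growth on the lifted foliation is equal to the volume growth on the strong unstable foliation.'' You have simply spelled out this last identification (via the leafwise isometry $p$) and the passage to the limit in more detail than the paper does.
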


We now construct the systems of $u$-measures. We start by a definition.
\begin{defi}[System of measures]
\label{def}
For $\bullet=u,cs$, a system of  $\bullet$-measures $\mu^\bullet$ is a family $\mu^\bullet=(\mu^\bullet_x)_{x\in \mathbb T^3}$ such that:
\begin{enumerate}
\item for any $x\in \mathbb T^3$, $\mu^\bullet_x$ is a Borelian measure on $\mathcal W^\bullet(x)$ which is finite on compact subsets of $\mathcal W^\bullet(x)$;
\item for any $x,y\in \mathbb T^3$ such that  $\mathcal W^\bullet(x)= \mathcal W^\bullet(y)$, one has $\mu^\bullet_x=\mu^\bullet_y$.
\end{enumerate}
We say that $\mu^\bullet$ is a measurable $($resp. continuous$)$ system of measures if $\mathbb T^3$ is covered by foliation charts $B$ such that for any $\phi \in C_c^{\infty}(B,\mathbb R)$, the mapping $x\mapsto \mu^\bullet_x(\phi_{|\mathcal W^\bullet_{B}}(x))$ is measurable $($resp. continuous$)$. Here, $\mathcal W^{\bullet}_{B}(x)$ denotes the connected component of $\mathcal W^\bullet(x)\cap B$ containing $x$.
\end{defi}
We start by constructing a measurable system of $u$-measure which has the scaling property \eqref{eq:conf}. The holonomy-invariance \eqref{eq:holhol} will be proved in \S \ref{sec5}.
\begin{prop}[Measurable system of $u$-measures]
\label{leafmeasure}
Let $f\in\mathcal A_+^{\infty}(\mathbb T^3)$. Then there exists a measurable system of  $u$-measure $(\mu^u_x)_{x\in \mathbb T^3}$ such that:
\begin{itemize}
\item for any $x\in \mathbb T^3$, $\mu_x^u$ has full support in $\mathcal W^u(x)$;
\item one has the scaling property
\begin{equation}
\label{eq:Marg}
\forall x\in \mathbb T^3, \quad f^*\mu^u_x=e^{h^u_{\mathrm{top}}(f)}\mu^u_{f^{-1}(x)}.
\end{equation}
\end{itemize}
\end{prop}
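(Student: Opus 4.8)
The plan is to realize $(\mu^u_x)_{x\in\mathbb T^3}$ as a renormalized limit of pullbacks of the Riemannian leaf volume, following the compactness strategy of \cite{BFT,CH2}. Write $\lambda:=e^{h^u_{\mathrm{top}}(f)}$, which equals $\lambda_u$ by Corollary \ref{corr12}, and let $\mathrm{Leb}^u_y$ denote the volume on $\mathcal W^u(y)$ induced by the Riemannian metric. For $n\in\mathbb N$ and $x\in\mathbb T^3$ set
\[
\mu^{u,n}_x:=\lambda^{-n}\,(f^{-n})_*\mathrm{Leb}^u_{f^n(x)},
\]
a Borel measure on $\mathcal W^u(x)$ that depends only on the leaf $\mathcal W^u(x)$ and is finite on compact subsets. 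Two elementary observations drive the argument. First, for $\delta>0$ small enough that $\mathcal W^u(x,\delta)$ embeds in $\mathbb T^3$, the change of variables $z=f^n(y)$ gives $\mu^{u,n}_x(\mathcal W^u(x,\delta))=\lambda^{-n}\mathrm{Vol}(f^n\mathcal W^u(x,\delta))$, so Proposition \ref{Mainprop} (the volume growth on a leaf coinciding with that on its lift) yields the two-sided bound
\[
C_1(\delta)\le\mu^{u,n}_x(\mathcal W^u(x,\delta))\le C_2(\delta),\qquad\forall n\in\mathbb N,\ \forall x\in\mathbb T^3.
\]
Second, testing against $\psi\in C_c(\mathcal W^u(f^{-1}x))$ and using $(g_*m)(h)=m(h\circ g)$ twice,
\[
(f^*\mu^{u,n}_x)(\psi)=\mu^{u,n}_x(\psi\circ f^{-1})=\lambda^{-n}\mathrm{Leb}^u_{f^n(x)}(\psi\circ f^{-(n+1)})=\lambda\,\mu^{u,n+1}_{f^{-1}(x)}(\psi),
\]
i.e. the family satisfies the \emph{approximate} equivariance $f^*\mu^{u,n}_x=\lambda\,\mu^{u,n+1}_{f^{-1}(x)}$.

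To pass to a limit that retains \emph{exact} equivariance I would perform a Krylov--Bogolyubov-type averaging: put $\nu^N_x:=\frac1N\sum_{n=0}^{N-1}\mu^{u,n}_x$. Summing the approximate equivariance telescopes to
\[
f^*\nu^N_x-\lambda\,\nu^N_{f^{-1}(x)}=\frac{\lambda}{N}\bigl(\mu^{u,N}_{f^{-1}(x)}-\mathrm{Leb}^u_{f^{-1}(x)}\bigr),
\]
whose right-hand side tends to $0$ vaguely on $\mathcal W^u(f^{-1}x)$ because both terms in the bracket have uniformly bounded mass on compact sets by the displayed bound. A standard compactness argument for systems of leaf measures with uniformly bounded local mass (as in \cite{BFT,CH2}) --- extracting a subsequence $N_k\to\infty$ along which $x\mapsto\nu^{N_k}_x(\phi)$ converges for every $\phi$ in a countable family of test functions subordinate to a finite foliation atlas, using that $x\mapsto\nu^N_x(\phi)$ is continuous since $f$ is smooth and the leaves $\mathcal W^u(f^n x)$ vary continuously with $x$ --- produces a measurable system of $u$-measures $(\mu^u_x)_{x\in\mathbb T^3}$ in the sense of Definition \ref{def}, with $\mu^u_x$ depending only on $\mathcal W^u(x)$. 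Letting $k\to\infty$ in the telescoped identity, and using that pushforward under the (proper) diffeomorphism $f$ is vaguely continuous, gives $f^*\mu^u_x=\lambda\,\mu^u_{f^{-1}(x)}$, which is \eqref{eq:Marg} since $\lambda=e^{h^u_{\mathrm{top}}(f)}$. Full support is inherited from the uniform lower bound: by lower semicontinuity of the mass of open sets under vague convergence, $\mu^u_x(\mathcal W^u(x,\delta))\ge C_1(\delta)>0$ for all small $\delta$, so $\mu^u_x$ charges every nonempty relatively open subset of $\mathcal W^u(x)$.

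I expect the delicate point to be the compactness/limit step: extracting a single limiting \emph{system} defined over all leaves simultaneously and checking that the limit inherits exact (not merely subsequential or approximate) $f$-equivariance. This is exactly why the Cesàro averaging is introduced --- it converts the approximate equivariance into an honest one up to the boundary term $\frac{\lambda}{N}(\mu^{u,N}_{f^{-1}(x)}-\mathrm{Leb}^u_{f^{-1}(x)})$, which is killed in the limit precisely by the uniform volume bound of Proposition \ref{Mainprop}. The remaining verifications (finiteness on compacts, leaf-dependence, and Borel measurability of $x\mapsto\mu^u_x(\phi)$ as a pointwise limit of continuous functions) are routine.
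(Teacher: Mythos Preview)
Your proof is correct in outline and close in spirit to the paper's, but the mechanism for producing the invariant system differs. You take Ces\`aro averages $\nu^N_x=\frac1N\sum_{n=0}^{N-1}\mu^{u,n}_x$ and use the telescoping identity to force exact $f$-equivariance in the limit (a Krylov--Bogolyubov argument). The paper instead works in the closed convex hull $\mathcal X=\overline{\mathrm{Conv}\{\lambda^{-n}(f^*)^n\mathrm{Vol}_{\mathcal W^u}\}}$ inside the product space $\prod_x\mathcal M(\mathcal W^u(x))$, shows $\mathcal X$ is compact via Tychonoff (using the same two-sided volume bound from Proposition~\ref{Mainprop} that you invoke), and then applies the Schauder--Tychonoff fixed point theorem to $S(\mu)=\lambda^{-1}f^*\mu$ to obtain $\mu^u$ directly. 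Both routes rest on the same compactness input, and your $\nu^N$ are particular points of the paper's convex hull; what the fixed-point formulation buys is that one gets the invariant system without having to extract a limit at all.

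The one place your write-up is genuinely thinner than it should be is precisely the step you flag: ``extracting a subsequence $N_k$ along which $x\mapsto\nu^{N_k}_x(\phi)$ converges for every $\phi$''. The weak topology on systems of leaf measures is the product topology over an uncountable index set (all $(x,\phi)$), hence not metrizable, and a diagonal argument over a countable family of $\phi$'s still leaves uncountably many base points $x$; there is no reason the functions $x\mapsto\nu^N_x(\phi)$ are equicontinuous in $N$, so Arzel\`a--Ascoli does not rescue you. One must pass to a convergent sub\emph{net} (or an ultrafilter limit), and then the claim that measurability of $x\mapsto\mu^u_x(\phi)$ follows ``as a pointwise limit of continuous functions'' is no longer routine. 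The paper's fixed-point argument sidesteps the extraction for existence, and then argues measurability by exhibiting $\mu^u$ as a limit of a \emph{sequence} of explicit convex combinations of the $\nu^n$. If you want to keep the Ces\`aro approach, you should either upgrade ``subsequence'' to ``subnet'' and supply a separate argument for Borel measurability, or observe that your $\nu^N$ lie in $\mathcal X$ and invoke Schauder--Tychonoff as the paper does.
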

\begin{proof}
We use a compactness argument similar to \cite{BFT,CH2} and first construct a system of measure which has full support in each leaf and has the scaling property.
\\
\textbf{There exists a system of $u$-measures with full support in each leaf and the scaling property \eqref{eq:Marg}.}
A test function $\psi$ is the data of $(\psi_x)_{x\in \mathbb T^3}$ where for any $x\in \mathbb T^3$, $\psi_x\in C_c(\mathcal W^u(x))$ is a continuous function with compact support in $\mathcal W^u(x)$. The set of test functions is denoted by $\mathrm{Con}^u(f)$. Let $\mathrm{Meas}^u(f)$ be the set of systems of $u$-measures.  We endow $\mathrm{Meas}^u(f)$ with the weak topology induced by $\mathrm{Con}^u(f)$. We define $\mathrm{Con}_+^u(f)$ to be the subset of test functions $\psi$ such that for any $x\in \mathbb T^3$, $\psi_x$ is  non-negative and non identically zero. Denote by $\mathrm{Vol}_{\mathcal W^u}\in \mathrm{Meas}^u(f)$ the system of measures induced by the Lebesgue measure on each unstable manifold. Define 
\begin{equation}
\label{eq:chi}
\mathcal X:=\overline{\mathrm{Conv}\big\{\nu^n:=e^{-nh^u_{\mathrm{top}}(f)}(f^*)^n\mathrm{Vol}_{\mathcal W^u}\mid n\in \mathbb N\big\}}\subset \mathrm{Meas}^u(f),
\end{equation}
where $\mathrm{Conv}(X)$ denotes the convex hull of $X$ and $\overline X$ its closure. \begin{lemm}
The subset $\mathcal X\subset \mathrm{Meas}^u(f)$ is compact.
\end{lemm}
\begin{proof}
This follows from Proposition \ref{Mainprop}. Indeed, we first show that given $x_1,x_2\in \mathbb T^3$, $X_1\subset \mathcal W^{u}(x_1)$ and $X_2\subset \mathcal W^{u}(x_2)$ two open and pre-compact sets, there is a constant $\hat e(X_1,X_2)>0$ such that
\begin{equation}
\label{eq:nn}
\forall n\geq 0,\quad \frac{1}{\hat e(X_1,X_2)}\leq \frac{\nu^{n}(X_1)}{\nu^{n}(X_2)}\leq \hat e(X_1,X_2).
\end{equation}
By pre-compactness, there exists a finite set $F$ such that $\overline{X_1}\subset \cup_{y\in F}\mathcal W^u(y,\delta)$. In particular, using \eqref{eq:lowerupper}, one has
$$ \forall n\in \mathbb N, \quad \nu^n(X_1)\leq \sum_{y\in F}\nu^n(\mathcal W^u(y,\delta))\leq |F|\times C_2(\delta).$$
On the other hand, since $X_2$ is open, there exists $z\in X_2$ and $\delta'>0$ such that $\mathcal W^u(z,\delta')\subset X_2$. Using \eqref{eq:lowerupper} again, one has
\begin{equation}
\label{eq:use}\forall n\in \mathbb N, \quad C_1(\delta')\leq \nu^n(\mathcal W^u(z,\delta'))\leq \nu ^n(X_2). 
\end{equation}
This implies that
$$\forall n\in \mathbb N,\quad \frac{\nu^{n}(X_1)}{\nu^{n}(X_2)}\leq \frac{|F|\times C_2(\delta)}{C_1(\delta')}. $$
We deduce \eqref{eq:nn} by exchanging the roles of $X_1$ and $X_2$.

Now, let $\psi\in \mathrm{Con}_+^u(f)$ and $\phi \in \mathrm{Con}^u(f)$ be test functions. There is a $r>0$ such that $A_r:=\psi^{-1}(r,+\infty)$ is open and pre-compact. Choose $A$ open and relatively compact containing the support of $\phi$, then using \eqref{eq:nn} for $A_r$ and $A$, we obtain
$$\forall n\in \mathbb N, \quad \frac{\nu^{n}(\phi)}{\nu^{n}(\psi)} \leq \frac{\|\phi\|_{\infty}\nu^{n}(A)}{r\nu^{n}(A_r)}\leq \frac{\|\phi\|_{\infty}}{r}\hat e(A,A_r)=:\hat e(\phi,\psi).  $$ In total, we have shown that:
\begin{enumerate}
\item for any $\phi\in \mathrm{Con}^u(f)$, there exists  a constant $c(\phi)>0$ such that for any $ \mu \in \mathcal X$, one has $\mu(\phi)\leq c(\phi)$;
\item for any $\phi\in \mathrm{Con}_+^u(f)$, there exists  a constant $c'(\phi)>0$ such that for any $ \mu \in \mathcal X$, one has $\mu(\phi)\geq c'(\phi)$.
\end{enumerate}
By  Tychonoff’s  theorem, this implies that $\mathcal X\subset \mathbb R^{\mathrm{Con}^u(f)}$ is compact for the product topology.
\end{proof}
We now construct the  $u$-system $(\mu^u_x)_{x\in \mathbb T^3}$ as a fix point. Indeed, define 
\begin{equation}
\label{eq:fixpoint}
S:\mathcal X\to \mathcal X, \quad S(\mu):=e^{-h^u_{\mathrm{top}}(f)}f^*\mu.
\end{equation}
Note that $S$ leaves $\mathcal X$ invariant and is continuous. Since $\mathcal X$ is convex and compact, the Schauder-Tychonoff fix point Theorem implies that $S$ has a fix point $\tilde\mu^u$:
\begin{equation}
\label{eq:tildemuu}
\exists \tilde\mu^u\in \mathcal X\quad f^*\tilde\mu^u=e^{h^u_{\mathrm{top}}(f)}\tilde\mu^u 
\end{equation}
Since $\tilde\mu^u\in \mathcal X$, \eqref{eq:lowerupper} implies that
\begin{equation}
\label{eq:tildemubound}
\forall x\in \mathbb T^3,\quad C_1(\delta)\leq \tilde \mu^u_x(\mathcal W^u(x,\delta))\leq C_2(\delta),
\end{equation}
which shows that $(\tilde\mu^u_x)_{x\in \mathbb T^3}$ has full support in each leaf and is finite on compact sets. Note that it is not clear at this point that $(\tilde\mu^u_x)_{x\in \mathbb T^3}$ is measurable.
\\
\textbf{A definition of the system of measures using dimension theory.} 
We now define the Margulis system $(\mu^u_x)_{x\in \mathbb T^3}$ using the system of measures $(\tilde \mu^u_x)_{x\in \mathbb T^3}$ constructed above and using ideas from dimension theory, following Climenhaga, Pesin et Zelerowicz \cite{CPZ20}. This more explicit construction will allow us to show that the system is measurable.
We recall the definition of a \emph{$u$-Bowen ball}:
\begin{equation}
\label{eq:uBow}
\forall x\in \mathbb T^3, \ \forall n\geq 0, \ \forall \delta>0, \quad \mathcal W^u_n(x,\delta):=\{y\in \mathbb T^3\mid \max_{k=0}^n d^u(f^k x,f^ky)<\delta\}.
\end{equation}
Since we work with an adapted metric, notice that $\mathcal W^u_n(x,\delta)=f^{-n}\mathcal W^u(f^n(y),\delta)$.
For a Borel set $Z \subset \mathcal W^u(x)$, the set of open covers by $u$-Bowen balls is
\begin{equation}
\label{e:covers}
\forall N\geq 0, \quad \mathcal C_N(Z):=\{\mathcal C\subset \mathbb T^3\times \llbracket N,+\infty \llbracket,\ Z\subset \cup_{(y,n)\in \mathcal C}\mathcal W^u_n(y,\delta)\}.
\end{equation}
The system of measures is defined as follows:
\begin{equation}
\label{eq:muu}
\mu^{u,N}_x(Z):=\inf_{\mathcal C\in \mathcal C_N(Z)}\sum_{(y,n)\in \mathcal C}\lambda_u ^{-n} ,\quad \mu^u_x(Z):=\lim_{N\to+\infty} \mu^{u,N}_x(Z).
\end{equation}
\textbf{For any $x\in \mathbb T^3$, $\mu^u_x$ defines a Borelian measure on $\mathcal W^u(x)$.} It is clear from \eqref{eq:muu} that $\mu^u_x(Z)\in[0,+\infty]$ for any Borel subset $Z\subset \mathcal W^u(x)$. We show that $\mu^u_x$ is an \emph{metric outer measure}. Recall that a measure $m$ on a metric space $(X,d)$ is a metric outer measure if for any $A,B\subset X$ such that $d(A,B):=\inf\{d(a,b)\mid a\in A, b\in B\}>0$, one has $m(A\cup B)=m(A)+m(B)$. Every metric outer
measure has the property that all Borel sets are measurable, see for instance \cite[Proposition 12.41]{Roy}.
\\ Suppose that $Z_1,Z_2\subset \mathcal W^u(x)$ are such that $d^u(Z_1,Z_2)>0$. Since the $u$-Bowen balls shrink exponentially fast when $N\to+\infty$, for $N$ large enough, $\mathcal C_N(Z_1\cup Z_2)$ splits as a part that covers $Z_1$ and a part that covers $Z_2$. In particular, one has
$$\forall N\gg 1,\ \mu^{u ,N}_x(Z_1\cup Z_2)=\mu^{u ,N}_x(Z_1)+\mu^{u ,N}_x(Z_2) \ \Rightarrow \ \mu^{u}_x(Z_1\cup Z_2)=\mu^{u}_x(Z_1)+\mu^{u}_x(Z_2).$$
This shows that $\mu^u_x$ is a Borel measure on $\mathcal W^u(x)$.
\\
\textbf{The measure $\mu^u_x$ is finite on compact sets.} Let $K\subset \mathcal W^u(x)$ be a compact subset. Let $N\geq 0$ and consider $\mathcal C \in \mathcal C_N(K)$ such that no proper subcover of $\mathcal C$ is in $\mathcal C_N(K)$. One has 
$$K\subset \bigcup_{(y,n)\in \mathcal C}\mathcal W^u_n(y,\delta)= \bigcup_{(y,n)\in {\mathcal C}}f^{-n}\big(\mathcal W^u(f^ny,\delta)\big).$$
Since $\mathcal W^u(x)$ is one dimensional and since no proper subcover of $\mathcal C$ is in $\mathcal C_N(K)$, for any $z\in K$, there are at most two elements $(y,n)\in {\mathcal C}$ such that $z\in \mathcal W^u_n(y,\delta)$. Hence,
$$\sum_{(y,n)\in \mathcal C}\tilde{\mu}^u_x \big( f^{-n}\big(W^u(f^ny,\delta)\big)\big)\leq 2\tilde{\mu}^u_x(K).$$
Using \eqref{eq:tildemuu} and \eqref{eq:tildemubound}, we obtain
$$\sum_{(y,n)\in \mathcal C}\lambda_u^{-n}\leq  2C_1(\delta)^{-1}\tilde{\mu}^u_x(K).$$
This shows that $\mu^u_x(K)\leq 2C_1(\delta)^{-1}\tilde{\mu}^u_x(K)$ is finite.
\\
\textbf{The measure $\mu^u_x$ has full support in $\mathcal W^u(x,\delta)$.} Let $x\in \mathbb T^3$ and $\epsilon>0$. We show that $\mu^u_x(\mathcal W^u(x,\epsilon))>0$. Let $\mathcal C\in \mathcal C_N(\mathcal W^u(x,\epsilon))$. Since $\mathcal W^u(x,\epsilon)$ is relatively compact, we can suppose without loss of generality that $ \mathcal C$ is finite. We have
$$\mathcal W^u(x,\epsilon)\subset \bigcup_{(y,n)\in \mathcal C}\mathcal W^u_n(y,\delta)= \bigcup_{y\in \mathcal C}f^{-n}\big(\mathcal W^u(f^ny,\delta)\big).$$
This implies that 
$$\tilde{\mu}^u_x(\mathcal W^u(x,\epsilon))\leq \sum_{(y,n)\in \mathcal C}\tilde{\mu}^u_x \big( f^{-n}\big(\mathcal W^u(f^ny,\delta)\big)\big).$$
Using \eqref{eq:tildemuu} and \eqref{eq:tildemubound}, we obtain
$$C_1(\epsilon)C_2(\delta)^{-1}\leq  \sum_{(y,n)\in \mathcal C}\lambda_u^{-n}.$$
Passing to the infimum gives $\mu^u_x(\mathcal W^u(x,\epsilon)) \geq C_1(\epsilon)C_2(\delta)^{-1}>0$.
\\
\textbf{The Margulis system has the scaling property \eqref{eq:Marg}.} Let $Z\subset \mathcal W^u(x)$ be a relatively compact Borel set. We need to show that $\mu^u_{f(x)}(f(Z))=\lambda_u\mu^u_x(Z)$. Let $N\geq 0$ be an integer and $\mathcal C\in C_N(Z)$. Then if we define $\mathcal C':=\{(f(x),n-1)\mid (x,n)\in \mathcal C_N(Z)\}$, we see that $\mathcal C'\in \mathcal C_{N-1}(f(Z))$. Moreover, any $\mathcal C'\in \mathcal C_{N-1}(f(Z))$ can be obtained this way and
$$\sum_{(y,m)\in \mathcal C'}\lambda_u^{-m}=\sum_{(z,n)\in \mathcal C}\lambda_u^{-(n-1)}=\lambda_u\sum_{(z,n)\in \mathcal C}\lambda_u^{-n}. $$
Taking the infimum yields $\mu^u_{f(x)}(f(Z))= \lambda_u\mu^u_x(Z)$.
\\
\textbf{The Margulis system $(\mu^u_x)_{x\in \mathbb T^3}$ is measurable.}
Since the limit of a sequence of measurable functions is measurable, \eqref{eq:muu} shows that it suffices to prove that $(\mu^{u,N}_x)_{x\in \mathbb T^3}$ is measurable.
Let $M\in \mathbb N$ with $M>N$. Define
$$\mathcal C_{N,M}(Z):=\{\mathcal C\subset \mathbb T^3\times \llbracket N,M \rrbracket,\ Z\subset \cup_{(y,n)\in \mathcal C}\mathcal W^u_n(y,\delta)\} $$
as well as 
\begin{equation}
\label{eq:muNM}\mu^{u,N,M}_x(Z):=\inf_{\mathcal C\in \mathcal C_{N,M}(Z)}\sum_{(x,n)\in \mathcal C}\lambda_u ^{-n}.  
\end{equation}
Since $\mu^{u,N}_x:=\lim_{M\to+\infty} \mu^{u,N,M}_x$, we only need to show that $(\mu^{u,N,M}_x)_{x\in \mathbb T^3}$ is measurable. To that end, first consider a compact set $K\subset \mathbb T^3$. We prove that  $x\mapsto  \mu^{u,N,M}_x(K_x)$, where $K_x:=K\cap \mathcal W^u_{\mathrm{loc}}(x)$, is upper semi-continuous, thus measurable. By compactness of $K$, $x\mapsto K_x$ is upper semicontinuous. For our purpose, this means that for any $x\in \mathbb T^3$ and any $\epsilon>0$, there is a $\eta>0$ such that if $d(x,y)<\eta$, then $$K_y\subset \mathrm{Hol}^{cs}_{x,y}(U_{\epsilon}(K_x))$$ where $U_{\epsilon}(K_x):=\cup_{z\in K_x}\mathcal W^u(z,\epsilon)$ is the $\epsilon$-neighborhood of $K_x$. Let $\mathcal C\in \mathcal C_{N,M}(K_x)$ with no proper subcover in $\mathcal C_{N,M}(K_x)$. This means that
$$K_x\subset  \bigcup_{(y,n)\in \mathcal C}f^{-n}\big(\mathcal W^u(f^ny,\delta)\big).$$
As before, this implies
$$\sum_{(y,n)\in \mathcal C}\tilde{\mu}^u_x \big( f^{-M}\big(\mathcal W^u(f^ny,\delta)\big)\big)\leq \sum_{(y,n)\in \mathcal C}\tilde{\mu}^u_x \big( f^{-n}\big(\mathcal W^u(f^ny,\delta)\big)\big)\leq 2\tilde{\mu}^u_x(K).$$
Using \eqref{eq:tildemuu} and \eqref{eq:tildemubound}, we obtain
$|\mathcal C|\leq  2C_1(\delta)^{-1}\tilde{\mu}^u_x(K_x)\lambda_u^{M}.$
In particular, the infimum in \eqref{eq:muNM} can be taken over partitions of uniformly bounded cardinality. As a consequence, the function in \eqref{eq:muNM} takes its values in a finite set and thus attains its minimum. We now suppose that $\mathcal C\in \mathcal C_{N,M}(K_x)$ is such that 
$$\mu^{u,N,M}_x(K_x)=\sum_{(y,n)\in \mathcal C} \lambda_u^{-n}.$$
Since $K_x$ is closed, there exists $\epsilon>0$ and $\delta'<\delta$ such that 
$$U_\epsilon(K_x)\subset   \bigcup_{(y,n)\in \mathcal C}f^{-n}\big(W^u(f^ny,\delta')\big).$$
Let $y\in \mathbb T^3$ be $\eta$-close to $x$ where $\eta$ is defined by the upper semi-continuity of $K$. Up to taking $\eta$ even smaller and since all the foliations are continuous, we have 
$$\mathrm{Hol}^{cs}_{f^nx,f^ny}\big(W^u(f^nz,\delta')\big)\subset \mathcal W^u( \mathrm{Hol}^{cs}_{f^nx,f^ny}\circ f^n z, \delta)=\mathcal W^u(f^n \circ \mathrm{Hol}^{cs}_{x,y}(z),\delta). $$
In other words, we have shown that $\mathcal C':=\{( \mathrm{Hol}^{cs}_{x,y}(z),n)\mid (z,n)\in \mathcal C\} $ is an element of $\mathcal C_{N,M}(K_y)$. Moreover, one has
$$\mu^{u,N,M}_x(K_x)= \sum_{(z,n)\in \mathcal C}\lambda_u^{-n} = \sum_{(w,n)\in \mathcal C'}\lambda_u^{-n}. $$
Passing to the infimum, we thus have $\mu^{u,N,M}_y(K_y)\leq \mu_x^{u,N,M}(K_x)$ which finishes the proof for compact sets $K$. 

Now, let $\mathcal K$ be the collection of all subsets $Z\subset \mathbb T^3$ such that $x\mapsto \mu^{u,N,M}_x(Z_x)$ is measurable. Note that this is a Dynkin system, i.e., 
\begin{itemize}
\item If $Z_1,Z_2\in \mathcal K$ and $Z_1\subset Z_2$, then $Z_2\setminus Z_1\in \mathcal K$.
\item If $Z_1\subset Z_2\subset \ldots\subset Z_n\subset \ldots \in \mathcal K$, then $\cup_{n\in \mathbb N}Z_n\in \mathcal K$.
\item $\mathbb T^3\in \mathcal K$.
\end{itemize}
Since $\mathcal K$ contains the closed subsets, Dynkin $\pi$-$\lambda$ theorem implies that it must contain the $\sigma$-algebra generated by the closed subsets. In other words, $\mathcal K$ contains all Borel sets which concludes the proof.
\end{proof}
Having constructed a measurable system of $u$-measures, we now characterize $u$-MMEs by their conditionals along the strong unstable foliation. We recall the following result due to Buzzi, Fisher and Tazhibi \cite{BFT}, see also \cite[Proposition 5.3]{Ta}.
\begin{prop}
Let $f\in \mathrm{Diff}^{\infty}(M)$ be a partially hyperbolic diffeomorphism on a closed manifold $M$ with a measurable Margulis system of measures $(\nu^u_{x})_{x\in M}$ such that 
\begin{itemize}
\item for any $x\in M$, $\nu^u_{x}$ is fully supported in $\mathcal W^u(x)$,
\item there is $D_u>0$ such that for any $x\in M$, 
$ f_*\nu^u_x=D_u\nu^u_{f(x)}.$
\end{itemize}
Then for any invariant measure $\nu$, one has
$ h^u_{\nu}(f)\leq \ln (D_u),$
with equality if and only if the disintegration of $\nu$ along $\mathcal W^u$ is given by $\nu^u_x$ $ ($up to a constant rescaling$)$, $\nu$-a.e.
\end{prop}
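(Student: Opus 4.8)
The plan is to run the classical argument — going back to Ledrappier's entropy formulas via conditional measures and to Bowen's construction of the measure of maximal entropy, and carried out for partially hyperbolic systems in \cite{BFT} — showing that a conformal system of leaf measures detects maximal entropy, here for the unstable entropy $h^u$. By a standard ergodic-decomposition argument it suffices to prove the statement for ergodic $\nu$ (both the inequality and the characterization in the equality case pass to and from the ergodic components). So fix $\nu$ ergodic, a partition $\xi\in\mathcal Q$ of small diameter, and the subordinate partition $\eta=\mathcal Q^u(\xi)\in\mathcal Q^u$. By the definition of $h^u_\nu(f)$ recalled in \S\ref{secUnstable} and Rokhlin's disintegration theorem \cite{Roh},
\begin{equation*}
h^u_\nu(f)=\limsup_{n\to+\infty}\frac{1}{n}\,H_\nu(\xi_0^n\mid\eta),\qquad H_\nu(\xi_0^n\mid\eta)=-\int_M\ln\nu^{\eta}_x\big(\xi_0^n(x)\cap\eta(x)\big)\,d\nu(x),
\end{equation*}
where $\nu^{\eta}_x$ denotes the conditional measure of $\nu$ on the unstable plaque $\eta(x)$.

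The heart of the matter is a Gibbs (Jensen) inequality comparing these conditionals to the Margulis system $(\nu^u_x)_{x\in M}$. For a fixed $x$, the atoms of $\xi_0^n$ contained in $\eta(x)$ partition $\eta(x)$; dividing their $\nu^u_x$-masses by $\nu^u_x(\eta(x))$ — a finite positive number, since $\nu^u_x$ is locally finite and fully supported — produces a probability vector, so the Gibbs inequality $\sum_i p_i\ln(q_i/p_i)\le 0$, integrated against $d\nu(x)$ by means of \eqref{eq:conditional}, yields
\begin{equation*}
H_\nu(\xi_0^n\mid\eta)\;\le\;-\int_M\ln\frac{\nu^u_x\big(\xi_0^n(x)\cap\eta(x)\big)}{\nu^u_x(\eta(x))}\,d\nu(x).
\end{equation*}
Next I would estimate the right-hand side using only the scaling relation $f_*\nu^u_x=D_u\nu^u_{f(x)}$. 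The atom $\xi_0^n(x)\cap\eta(x)$ is sandwiched, with distortion bounded uniformly in $x$ and $n$, between iterated preimages $f^{-n}(\eta^{\mp}(f^nx))$ of plaques (the plaques of $\eta$ contain, and are contained in, unstable balls of fixed radii, which one controls through the expansion of $f$ along $\mathcal W^u$); iterating the scaling relation then shows that $\nu^u_x(\xi_0^n(x)\cap\eta(x))$ is comparable, uniformly in $x$ and $n$, to $D_u^{-n}$, while $w\mapsto\nu^u_w(\eta(w))$ is squeezed between two positive constants by compactness of $M$. Hence the right-hand side above equals $n\ln D_u+O(1)$; dividing by $n$ and letting $n\to\infty$ gives $h^u_\nu(f)\le\ln D_u$.

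For the equality case: if $h^u_\nu(f)=\ln D_u$ then the Gibbs inequalities above are asymptotically saturated, and a Shannon--McMillan--Breiman argument along $\mathcal W^u$ together with a Borel--Cantelli / ratio-ergodic estimate on the leaves (as in \cite{BFT}, see also \cite[Proposition 5.3]{Ta}) upgrades this to the pointwise identity that $\nu^{\eta}_x$ is the restriction of $\nu^u_x$ to $\eta(x)$ renormalized to a probability measure, for $\nu$-a.e.\ $x$; patching over a finite cover of $M$ by rectangles and using uniqueness of conditionals then gives that the $\mathcal W^u$-disintegration of $\nu$ is $(\nu^u_x)_{x\in M}$ up to a constant rescaling on each leaf. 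The converse is immediate: if the disintegration is proportional to $(\nu^u_x)$, the Gibbs inequality becomes an equality and the computation of the previous paragraph gives $h^u_\nu(f)=\ln D_u$ exactly. The hard part will be this last upgrade in the equality case — passing from ``the Jensen bound is saturated in the $\limsup$ along the dynamically defined refining sequence $\xi_0^n$'' to an honest a.e.\ identification of conditional measures; this forces one to exclude a singular part of $\nu^{\eta}_x$ relative to $\nu^u_x$ and to exploit the specific (non-arbitrary) shape of the atoms $\xi_0^n(x)$, which is where a Vitali-type covering / maximal inequality on the leaves enters. By contrast, the reduction in the first paragraph and the volume control behind the sandwiching of $\xi_0^n(x)\cap\eta(x)$ are routine.
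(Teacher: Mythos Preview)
The paper does not supply its own proof of this proposition: it is quoted verbatim as a result ``due to Buzzi, Fisher and Tahzibi \cite{BFT}, see also \cite[Proposition 5.3]{Ta}'' and then used as a black box to deduce Corollary~\ref{cor}. So there is nothing in the paper to compare your argument against.

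That said, your outline is precisely the standard route taken in those references: reduce to ergodic $\nu$, apply the Gibbs/Jensen inequality on each plaque $\eta(x)$ to bound $H_\nu(\xi_0^n\mid\eta)$ by an integral of $-\ln\bigl(\nu^u_x(\xi_0^n(x)\cap\eta(x))/\nu^u_x(\eta(x))\bigr)$, then use the scaling $f_*\nu^u_x=D_u\,\nu^u_{f(x)}$ together with the fact that $\eta$-plaques have inner and outer radii bounded away from $0$ and $\infty$ to get $\nu^u_x(\xi_0^n(x)\cap\eta(x))\asymp D_u^{-n}$ uniformly. This gives $h^u_\nu(f)\le\ln D_u$. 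Your identification of the equality case as the delicate step is accurate; in \cite{Ta} it is handled via an information-function argument (the sequence $-\tfrac1n\ln\nu^\eta_x(\xi_0^n(x))$ converges a.e.\ to $h^u_\nu(f)$), from which the a.e.\ identification of conditionals follows by a density-point argument on the leaves. Your sketch of ``Shannon--McMillan--Breiman plus a ratio/maximal inequality'' is exactly this, though in an actual write-up you would need to be explicit that the relevant SMB-type theorem here is the one for the conditional entropy $H_\nu(\cdot\mid\eta)$ along the increasing partitions $f^{-1}\eta\prec f^{-2}\eta\prec\cdots$, not the usual SMB for $\xi_0^n$.
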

Using Corollary \ref{corr12} and Proposition \ref{leafmeasure}, we deduce Corollary \ref{cor}.
\section{The first Ruelle resonance on $2$-forms.}
\label{sec4}
In this section, we study the first resonance of the Anosov diffeomorphism $f$ when acting on the bundle of differential $2$-forms. We will show the following result. Recall that the space of resonant states $\mathrm{Res}_k(f)$ is defined in \eqref{eq:res} and that $\lambda\in \mathrm{Res}_k(f)\cap \mathbb R_+$ is said to be the first resonance for the action on $k$-forms if $\mathrm{Res}_k(f)\subset \{z\in \mathbb C,|z|\leq \lambda\}.$
\begin{prop}[First Ruelle resonance]
\label{prop1st}
Let $f\in\mathcal A_+^{\infty}(\mathbb T^3)$. Then one has
\begin{equation}
\label{eq:equality}
\sup\{|z|, z\in \mathrm{Res}_2(f)\}=\lambda_u=e^{h_{\mathrm{top}}^u(f)}=e^{P(J^c_f)}.
\end{equation}
Moreover, $\lambda_u$ is the {first Pollicott-Ruelle resonance}, it has no Jordan block and has multiplicity equal to one. Finally, the trace of the spectral projector of $f$ at $z=\lambda_u$ is equal to $\mu_{J^c_f}$.
\end{prop}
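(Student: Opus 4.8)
\emph{Proof plan.} The strategy is to run the anisotropic (functional) approach to Pollicott--Ruelle resonances for the transfer operator $\mathcal L_2:=f_*$ acting on $2$-currents, to reduce its leading spectral data to that of the scalar Ruelle operator of weight $J^c_f$ through the periodic-orbit (dynamical determinant) formula, and to match the outcome with $\lambda_u$ by means of the volume estimates of \S\ref{sec3}.

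\textbf{Step 1 (anisotropic framework).} Following the standard construction of anisotropic spaces for Anosov maps (Faure--Sjöstrand, Baladi--Tsujii, Gouëzel--Liverani, and Dang--Rivière for the action on differential forms; see also \cite{GouLiv,Hum,Hum2}), I would build, for each large $N$, a Banach space $\mathcal H^N$ of $2$-currents subordinate to the decomposition $T^*\mathbb T^3=E_s^*\oplus E_{cu}^*$ on which $\mathcal L_2$ is bounded with essential spectral radius $\leq C\Lambda^{-N}$, $\Lambda>1$, and whose eigenvalues in $\{|z|>C\Lambda^{-N}\}$, with multiplicities and Jordan blocks, are exactly $\mathrm{Res}_2(f)$, independently of $N$. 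Taking $N$ so large that $C\Lambda^{-N}<e^{P(J^c_f)}$, the dynamical determinant
\[
d_2(z)=\exp\Big(-\sum_{n\geq 1}\frac{z^n}{n}\sum_{f^n x=x}\frac{\tr\Lambda^2(Df^{-n}_x)}{|\det(\mathrm{Id}-Df^{-n}_x)|}\Big)
\]
is then holomorphic in a disc of radius $>e^{P(J^c_f)}$, with zeros exactly at the resonances counted with multiplicity (Atiyah--Bott--Guillemin trace formula). The Hölder conjugacy $h\circ f=A\circ h$, with $h$ isotopic to $\mathrm{Id}$ and carrying $\mathcal W^{cu}_f,\mathcal W^{cs}_f$ onto the linear foliations of $A$, is used here only to build the escape function and shorten these a priori estimates.

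\textbf{Step 2 (reduction to the weight $J^c_f$).} As $E_u,E_c,E_s$ are one-dimensional, at a periodic point $x$ of period $n$ the eigenvalues of $Df^{-n}_x$ have moduli $(\lambda^u_{n,x})^{-1},(\lambda^c_{n,x})^{-1},(\lambda^s_{n,x})^{-1}$, where $\lambda^\bullet_{n,x}=\|Df^n_x|_{E_\bullet}\|$; hence $\tr\Lambda^2(Df^{-n}_x)$ is a sum of three products of pairs, the dominant being $(\lambda^c_{n,x}\lambda^s_{n,x})^{-1}$, while $|\det(\mathrm{Id}-Df^{-n}_x)|=(\lambda^s_{n,x})^{-1}(1+O((\lambda^c_{n,x})^{-1}))$. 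Using the domination $E_c\prec E_u$ together with $\lambda^s_{n,x}<1<\lambda^c_{n,x}<\lambda^u_{n,x}$, this gives
\[
\frac{\tr\Lambda^2(Df^{-n}_x)}{|\det(\mathrm{Id}-Df^{-n}_x)|}=e^{S_nJ^c_f(x)}\big(1+O(e^{-cn})\big),\qquad S_nJ^c_f(x)=\sum_{k=0}^{n-1}J^c_f(f^kx)=-\log\lambda^c_{n,x},
\]
for some $c>0$, so that $d_2(z)=d_{J^c_f}(z)\,\kappa(z)$ with $d_{J^c_f}$ the dynamical determinant of the Ruelle operator of weight $J^c_f$ and $\kappa$ holomorphic and non-vanishing in a disc of radius $>e^{P(J^c_f)}$. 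By the thermodynamic formalism of Anosov maps (\cite[Theorem 20.3.7]{HK}), the first zero of $d_{J^c_f}$ is $e^{P(J^c_f)}$ and is simple; therefore $e^{P(J^c_f)}$ is the first Pollicott--Ruelle resonance of $\mathcal L_2$, the only one on the circle $\{|z|=e^{P(J^c_f)}\}$, with no Jordan block and multiplicity one, and the spectral projector at it is rank one, its flat trace being the $f$-invariant probability measure built from the corresponding leading (co)eigenvectors, i.e.\ the equilibrium state $\mu_{J^c_f}$.

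\textbf{Step 3 (identification with $\lambda_u$ and conclusion).} It remains to prove $e^{P(J^c_f)}=\lambda_u$. The inequality $e^{P(J^c_f)}\leq\lambda_u$ is exactly \eqref{eq:ineq} together with Corollary \ref{corr12}. For the reverse I would check that $\lambda_u$ is itself a resonance for the action on $2$-forms: by a compactness argument parallel to Proposition \ref{leafmeasure}, applied to the system of leafwise Lebesgue currents along $\mathcal W^{cs}$ rescaled by $\lambda_u^{-n}f_*^{n}$, and using the analogue of \eqref{eq:lowerupper} for the center-stable foliation (so that the rescaled currents stay in a compact subset of $\mathcal H^N$ — this uses Potrie's global geometry of $\mathcal W^{cs}$ via the Hölder conjugacy), one obtains a non-zero current $\theta_0\in\mathcal D'_2(\mathbb T^3)$ with $f_*\theta_0=\lambda_u\theta_0$ and $\mathrm{WF}(\theta_0)\subset E_{cu}^*$, hence $\lambda_u\in\mathrm{Res}_2(f)$ and $\lambda_u\leq e^{P(J^c_f)}$. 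Putting everything together, $\sup\{|z|:z\in\mathrm{Res}_2(f)\}=\lambda_u=e^{h^u_{\mathrm{top}}(f)}=e^{P(J^c_f)}$, and the remaining assertions (first resonance, no Jordan block, multiplicity one, trace of the spectral projector) are furnished by Step 2. I expect the principal obstacle to be precisely this last construction — establishing the wavefront bound $\mathrm{WF}(\theta_0)\subset E_{cu}^*$: the center-stable structure being only Hölder, controlling the regularity of $\theta_0$ transverse to $\mathcal W^{cu}$ is delicate, this being the functional counterpart of the regularity difficulties emphasised throughout the paper, and is where one must combine the escape-function estimates of Step 1 with the limiting construction (alternatively, one may instead realise $\lambda_u$ as a co-resonance, packaging the $u$-Margulis system $(\mu^u_x)$ of Proposition \ref{leafmeasure} into a $1$-current $T$ with $f^*T=\lambda_u T$ and $\mathrm{WF}(T)\subset E_{cs}^*$, and invoke that resonances and co-resonances of $\mathcal L_2$ coincide).
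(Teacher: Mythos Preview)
Your Steps 1 and 2 match the paper's approach closely: the paper likewise reduces the question of the leading resonance on $2$-forms to the location of the first zero of the dynamical determinant $\mathcal D_{f,2}$, shows via the expansion you wrote that on the critical circle this coincides with the first zero of the weighted Ruelle determinant $\zeta_{f,J^c_f}$, and then invokes Ruelle's theorem to conclude that $e^{P(J^c_f)}$ is the unique resonance on the critical circle, simple and without Jordan block (Lemma~\ref{lemm1}). The identification of the spectral projector with $\mu_{J^c_f}$ is done as you indicate, via the Guillemin trace formula and Bowen's periodic-orbit formula (Lemma~\ref{lemm2}).

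The genuine divergence is in Step 3, where the paper's argument is far simpler and avoids the obstacle you flag. Rather than constructing by hand a (co)resonant current at $\lambda_u$, the paper observes that the \emph{Artin--Mazur zeta function} is explicitly computable: since $f$ is homotopic to $A$, the Lefschetz fixed point theorem gives $\zeta_f=\zeta_A$, an explicit rational function with a pole at $z=\lambda_u^{-1}$. On the other hand the factorisation $\zeta_f=\mathcal D_{f,1}\mathcal D_{f,3}/(\mathcal D_{f,0}\mathcal D_{f,2})$ forces this pole to be a zero of $\mathcal D_{f,0}$ or $\mathcal D_{f,2}$; but $\mathrm{Res}_0(f)\subset\{|z|\le 1\}$ trivially (the transfer operator is a contraction on $L^\infty$), so $\lambda_u\in\mathrm{Res}_2(f)$ immediately. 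Combined with Lemma~\ref{lemm1} this gives $\lambda_u\le e^{P(J^c_f)}$, and with \eqref{eq:ineq} and Corollary~\ref{corr12} equality follows. This purely topological/algebraic step sidesteps entirely the wavefront-set difficulty you anticipate.

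As for your proposed routes in Step~3: the primary one (limit of rescaled $cs$-leaf currents) has exactly the problem you identify --- the compactness argument of Proposition~\ref{leafmeasure} produces a \emph{system of leaf measures}, not an element of the anisotropic space $\mathcal H^N$, and bridging the two requires the holonomy regularity that is the crux of the paper. Your alternative (packaging $(\mu^u_x)$ into a global $1$-current) is precisely Lemma~\ref{lemmWF} in the paper, but note that well-definedness of that current requires the $\mathrm{Hol}^{cs}$-invariance of $(\mu^u_x)$ (Proposition~\ref{propdur}), whose proof in turn \emph{uses} Proposition~\ref{prop1st}; so in the paper's logical architecture this route would be circular.
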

We note that as a direct consequence of  \eqref{eq:equality}  and \eqref{eq:uvaria}, we obtain.
\begin{corr}
\label{lemmmuJ}
Let $f\in\mathcal A_+^{\infty}(\mathbb T^3)$.
The equilibrium state $\mu_{J^c_f}$ is a $u$-MME.
\end{corr}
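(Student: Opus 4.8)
The plan is to simply chain together the two inputs advertised in the statement, so the proof is essentially a one-line deduction. First I would record that $\mu_{J^c_f}$, being the unique equilibrium state of the Hölder potential $J^c_f$ for the Anosov diffeomorphism $f$, is ergodic (indeed exponentially mixing), so that its unstable metric entropy $h^u_{\mu_{J^c_f}}(f)$ is well defined through \eqref{eq:hmuu}. This is the only point that genuinely needs to be mentioned.

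Next, recall that in the proof of the Lemma leading to \eqref{eq:ineq} we in fact established the sharper inequality \eqref{eq:uvaria}, namely $h^u_{\mu_{J^c_f}}(f)\geq P(J^c_f)$. On the other hand, the chain of identities in Proposition \ref{prop1st} gives $e^{P(J^c_f)}=\lambda_u=e^{h^u_{\mathrm{top}}(f)}$, hence $P(J^c_f)=h^u_{\mathrm{top}}(f)$. Combining these two facts yields $h^u_{\mu_{J^c_f}}(f)\geq h^u_{\mathrm{top}}(f)$.

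Finally, the variational principle \eqref{eq:varia11} gives the reverse inequality $h^u_{\mu_{J^c_f}}(f)\leq h^u_{\mathrm{top}}(f)$, so in fact $h^u_{\mu_{J^c_f}}(f)= h^u_{\mathrm{top}}(f)$, i.e. $\mu_{J^c_f}\in\mathcal M^u(f)$. There is no real obstacle here: all the substance is contained in Proposition \ref{prop1st} (whose proof via the spectral/functional approach is the hard part) together with the entropy estimate \eqref{eq:uvaria}; once those are in hand the corollary is immediate.
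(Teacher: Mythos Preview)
Your argument is correct and matches the paper's own proof, which simply states that the corollary is a direct consequence of \eqref{eq:equality} and \eqref{eq:uvaria}. You have just spelled out in slightly more detail the chain $h^u_{\mu_{J^c_f}}(f)\geq P(J^c_f)=h^u_{\mathrm{top}}(f)\geq h^u_{\mu_{J^c_f}}(f)$ that the paper leaves implicit.
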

\subsection{Ruelle resonances}
\label{secRuelle}
In this subsection, we recall some facts about the functional approach for Anosov diffeomorphisms. For a detailed introduction to the subject, we refer to \cite{Ba}. For $0\leq k \leq 3$, define the vector bundle of smooth $k$-forms
$\Omega_k= C^{\infty}(\mathbb T^3; \Lambda^k T^* (\mathbb T^3)). $
The diffeomorphism $f$ acts on $\Omega_k$ by pushforward:
\begin{equation}
\label{eq:Lie}
f_{*,k}: \Omega_k\to \Omega_k, \quad f _{*,k} \omega:=f_*\omega.
\end{equation}
For an Anosov diffeomorphism, one can associate to $f_{*,k}$ a discrete spectrum $\mathrm{Res}_k(f)$, the \emph{Pollicott-Ruelle resonances}, by making it act on specially designed \emph{anisotropic spaces}, see for instance \cite{BKL,BT,GouLiv,Fau08}. 

For any $0\leq k \leq 3$, the resolvent 
$R_k(\lambda):=(f_{*,k} -\lambda)^{-1}:L^{2}(\mathbb T^3;\Omega_k)\to L^{2}(\mathbb T^3;\Omega_k), $
defined for $\lambda\in \mathbb C$ such that $|\lambda|\gg1$ admits a meromorphic extension $R_k(\lambda)$ to $\mathbb C\setminus \{0\}$
\begin{equation}
\label{eq:resol}
R_k(\lambda): \Omega_k\to \mathcal D'_k(\mathbb T^3),
\end{equation}
where $D'_k(\mathbb T^3)$ denotes the space of distributional currents of degree $k$, i.e., the dual space of $\Omega_{3-k}$, see \cite[Chapter 10.1]{Lef}. We will write $(\cdot,\cdot)_{\Omega_k \times \mathcal D'_{3-k}}$ for the pairing between $\Omega_k$ and $\mathcal D'_{3-k}$. More precisely,  by the work of Faure, Roy and Sj{\"o}strand \cite{Fau08}, there exists a family of Hilbert spaces $(\mathcal H^s (\mathbb T^3;\Omega_k))_{s>0}$ such that:
\begin{itemize}
\item the space $\Omega_k$ is densely included in $\mathcal H^s (\mathbb T^3;\Omega_k)$ (see  \cite[Lemma 9.1.13]{Lef}) and one has $H^s\subset \mathcal H^s \subset H^{-s} $, where $H^s $ is the usual $L^2$-Sobolev space of order $s$ (see  \cite[Lemma 9.1.14]{Lef});
\item
 there exists $c>0$ such that for any $s>0$, the resolvent  
$
R_k(\lambda)= \mathcal H^s\to \mathcal H^s$
is well defined, bounded and holomorphic for $\{|\lambda|\gg 1\}$, and has a meromorphic extension to $\{|\lambda|> e^{-cs}\}$ independent of any choice made in the construction. 
\end{itemize}
Applying the result to $f^{-1}$, we obtain a family of Hilbert spaces $(\mathcal H^{-s} (\mathbb T^3, \Omega_k))_{s>0}$, dual to $(\mathcal H^s (\mathbb T^3, \Omega_k))_{s>0}$ on which the resolvent of the pullback of $f$ has a meromorphic extension, see \cite{Fau08}. We will write $(\cdot,\cdot)_{\mathcal H^s \times \mathcal H^{-s}}$ for the pairing between $\mathcal H^s$ and $\mathcal H^{-s}$.

The poles of the extension are intrinsic and are called the \emph{Pollicott-Ruelle resonances $\mathrm{Res}_k(f)$ of $f_{*,k}$}. For any $\lambda_0\in \mathrm{Res}_k(f)$, the spectral projector at $\lambda_0$ is given by
$$ \Pi_k^{\lambda_0}=-\frac{1}{2i\pi}\int_{\gamma}R_k(z)dz,$$
where $\gamma$ is a small loop around $\lambda_0$ and has finite rank. The (algebraic) multiplicity of $\lambda_0$ is given by the rank of the spectral projector:
\begin{equation}
\label{eq:mult}
\forall \lambda_0\in \mathrm{Res}_k(f), \quad m_k(\lambda_0):=\mathrm{rank}(\Pi_k^{\lambda_0})\in \mathbb N.
\end{equation}
Near a resonance $\lambda_0\in \mathrm{Res}_k(f)$, the resolvent admits a Laurent expansion of the form
$$R_k(\lambda)=R_k^H(\lambda)-\sum_{j=1}^{N(\lambda_0)}\frac{(f_{*,k} -\lambda_0)^{j-1}\Pi_{\lambda_0}^+}{(\lambda-\lambda_0)^j}, $$
where $R_k^H(\lambda)$ is the holomorphic part near $\lambda_0$. The \emph{generalized resonant states} are the elements in the range of the spectral projector.
$$
\mathrm{Res}_{k,\lambda_0,\infty}(f):=\Pi_{\lambda_0}^+(\mathcal H^s)=\Pi_{\lambda_0}^+(\Omega_k)=\{u\in \mathcal H^s\mid (f_{*,k}-\lambda_0)^{N(\lambda_0)}u=0\}.
$$
Since the meromorphic extension is independent of the particular choice of anisotropic space $\mathcal H^s$, the space of generalized resonant states does not depend on $\mathcal H^s$. One way to see this is to use the following equivalent characterization of generalized resonant states. Define a dual decomposition of the Anosov decomposition:
\begin{equation}
\label{eq:dual}
T^*(\mathbb T^3)=E_s^*\oplus E_{cu}^*=E_{cs}^*\oplus E_u^*, \quad E_\bullet^*(E_\bullet)=0, \ \bullet=s,cu,cs,u.
\end{equation}
Then one has, see \cite[Lemma 5.12]{DFG},
$$
\mathrm{Res}_{k,\lambda_0,\infty}(f)=\{u\in \mathcal D'_k(\mathbb T^3)\mid (f_{*,k}-\lambda_0)^{N(\lambda_0)}u=0,\ \mathrm{WF}(u)\subset E_{cu}^*\},
$$
where $\mathrm{WF}(u)$ denotes the wavefront set of a current $u$, see \cite[Chapter VIII]{Hor}.
The \emph{resonant states} are defined as 
\begin{equation}
\label{eq:resonant}
\mathrm{Res}_{k,\lambda_0}(f):=\{u\in \mathcal D'_k(\mathbb T^3)\mid (f_{*,k}-\lambda_0)u=0,\ \mathrm{WF}(u)\subset E_{cu}^*\}. \end{equation}
We will say that $f$ has no Jordan block at $\lambda_0$ if $N(\lambda_0)=1$, that is, if $\mathrm{Res}_{k,\lambda_0}(f)=\mathrm{Res}_{k,\lambda_0,\infty}(f)$. Note that applying the previous results to $f^{-1}$ yields notions of (generalized) co-resonant states, see \eqref{eq:cores}.
\subsection{Zeta function}
We recall the definition of the \emph{Ruelle zeta function}. In our setting, this function is also known as the \emph{Artin-Mazur zeta function}.  For $n\geq 1$, let 
\begin{equation}
\label{eq:Fixf^n}
\mathcal F_n(f):=\{x\in \mathbb T^3\mid f^n(x)=x\}.
\end{equation}
The Ruelle zeta function is given by:
\begin{equation}
\label{eq:Zeta}
\forall z\in \mathbb C,\  |z|\ll 1, \quad \zeta_{f}(z):=\exp \left(\sum_{m=1}^{+\infty}\frac{z^m}{m}\mathrm{Card}(\mathcal F_m(f))\right).
\end{equation}
The zeta function $\zeta_f$ has a meromorphic extension (actually rational extension in our case) to the complex plane. This is a special case of the works of Liverani-Tsujii \cite{LiTs} and Baladi-Tsujii \cite{BaTs}. In fact, for an Anosov diffeomorphism on a torus, one can compute explicitly $\zeta_f$ in terms of its action on the homology $A$.
\begin{lemm}
Let $f\in\mathcal A_+^{\infty}(\mathbb T^3)$ and let $A\in \mathrm{GL}_3(\mathbb Z)$ be its action on homology. Then
\begin{equation}
\label{eq:zetafA}
\forall z\in \mathbb C, \quad \zeta_f(z)=\zeta_A(z)=\frac{(1-z\lambda_s\lambda_c)(1-z\lambda_s\lambda_u)(1-z\lambda_c\lambda_u)}{(1-z\lambda_s)(1-z\lambda_c)(1-z\lambda_u)}.
\end{equation}
\end{lemm}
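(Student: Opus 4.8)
The plan is to reduce the assertion for $f$ to the linear model $A$ and then count the periodic points of $A$ directly. First I would note that $\zeta_f=\zeta_A$: the H\"older homeomorphism $h$ with $h\circ f=A\circ h$ intertwines $f^m$ with $A^m$ for every $m\ge1$, hence restricts to a bijection $\mathcal F_m(f)\to\mathcal F_m(A)$ (only the homeomorphism property of $h$ is used, its regularity being irrelevant here). Thus $\mathrm{Card}(\mathcal F_m(f))=\mathrm{Card}(\mathcal F_m(A))$ for all $m$, and the defining series \eqref{eq:Zeta} coincide term by term, giving $\zeta_f=\zeta_A$. It remains to compute $\zeta_A$ explicitly.

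For the count, write $\mathbb T^3=\mathbb R^3/\mathbb Z^3$; a class $\bar x$ is fixed by $A^m$ iff $(A^m-\mathrm{Id})x\in\mathbb Z^3$ for any lift $x$. Since $\lambda_s\in(0,1)$ and $\lambda_c,\lambda_u>1$, none of $\lambda_s^m,\lambda_c^m,\lambda_u^m$ equals $1$, so $A^m-\mathrm{Id}$ is invertible over $\mathbb R$ and the solution set modulo $\mathbb Z^3$ is finite of cardinality $|\det(A^m-\mathrm{Id})|=\bigl|(\lambda_s^m-1)(\lambda_c^m-1)(\lambda_u^m-1)\bigr|$ (equivalently, this is the Lefschetz number of $A^m$, all fixed points being nondegenerate of the same index). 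The orientation assumption forces $\det A=1$, i.e. $\lambda_s\lambda_c\lambda_u=1$; expanding the product and substituting $(\lambda_s\lambda_c\lambda_u)^m=1$ cancels the degree-three term and yields
\begin{equation*}
\mathrm{Card}(\mathcal F_m(A))=(\lambda_s\lambda_c)^m+(\lambda_s\lambda_u)^m+(\lambda_c\lambda_u)^m-\lambda_s^m-\lambda_c^m-\lambda_u^m .
\end{equation*}

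Plugging this into \eqref{eq:Zeta}, for $|z|$ small each of the six pieces is a series of the form $\sum_{m\ge1}\frac{z^m}{m}\mu^m=-\ln(1-z\mu)$, so exponentiating gives the closed-form rational expression for $\zeta_f=\zeta_A$ asserted in \eqref{eq:zetafA} on a neighbourhood of $0$; since both sides are rational functions the identity then holds on all of $\mathbb C$ (this in particular re-proves the rationality of $\zeta_f$, a special case of \cite{LiTs,BaTs}). I do not expect any real obstacle here: the whole content is the periodic-point bijection and the elementary determinant computation. The only points that need care are the sign of $\det(A^m-\mathrm{Id})$ and the systematic use of $\det A=1$ — which holds precisely because of the standing orientation assumption — to eliminate the cubic monomial; after that, the last step is just bookkeeping of six logarithms.
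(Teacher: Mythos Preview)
Your approach is correct and differs from the paper's in the opening move. You reduce from $f$ to $A$ via the topological conjugacy $h$ (giving a bijection $\mathcal F_m(f)\to\mathcal F_m(A)$) and then count $\mathrm{Fix}(A^m)$ directly as $|\det(A^m-\mathrm{Id})|$, expanding with $\det A=1$. The paper instead invokes the Lefschetz fixed-point formula for $f$ itself, using that $f$ and $A$ induce the same maps on $H_*(\mathbb T^3)$ and that the orientation hypothesis makes every local index equal to $+1$; it then reads off the count from the eigenvalues of $\Lambda^k A$. Both routes produce the same closed-form expression for $\mathrm{Card}(\mathcal F_m)$, and the final logarithm-summing step is identical. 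Your version is more elementary (no Lefschetz theorem, no index discussion) but relies on the conjugacy $h$, which the paper's own proof of this lemma does not need; conversely the paper's route would work for any $f$ homotopic to $A$, without knowing they are conjugate.

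One bookkeeping caution: if you actually carry out your last step, the terms $+(\lambda_i\lambda_j)^m$ exponentiate to factors $(1-z\lambda_i\lambda_j)^{-1}$ and the terms $-\lambda_i^m$ to factors $(1-z\lambda_i)$, so one obtains the \emph{reciprocal} of the fraction displayed in \eqref{eq:zetafA}. The paper's own derivation contains the matching slip (the exponent should be $(-1)^{k+1}$ rather than $(-1)^k$), so this looks like a typo in the displayed formula rather than a defect in either argument; the two inversions cancel when one combines \eqref{eq:zetafA} with \eqref{eq:zetatrace}, and the downstream conclusion in Lemma~\ref{lemmfirst} is unaffected.
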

\begin{proof}
Since $f$ preserves orientation, the Lefschetz fixed point Theorem gives
$$ \mathrm{Card}\big(\mathcal F_m(f)\big)=\sum_{k=0}^3(-1)^k \tr(\tilde f_{k,*}^m),$$
where $\tilde f_{k,*}:H_k(\mathbb T^3, \mathbb R)\cong \mathbb Z^{\binom{3}{k}}\to H_k(\mathbb T^3, \mathbb R)$ denotes the action of $f$ on the $k$-th homology group of $\mathbb T^3$. Since $\tilde f_{k,*}=\tilde A_{k,*}$, an easy computation using \eqref{eq:Zeta} yields
$$ \zeta_f(z)=\zeta_A(z)=\prod_{k=0}^3 \mathrm{det}(\mathrm{Id}-z\tilde A_{k,*})^{(-1)^k}=\frac{(1-z)(1-z\lambda_s\lambda_c)(1-z\lambda_s\lambda_u)(1-z\lambda_c\lambda_u)}{(1-z)(1-z\lambda_s)(1-z\lambda_c)(1-z\lambda_u)},$$
which simplifies into \eqref{eq:zetafA}.
\end{proof}
\subsection{Dynamical determinants}
In this subsection, we recall the link between the spectral theory of the diffeomorphism $f$ on the anisotropic spaces and the periodic orbits of $f$.
 We define the \emph{dynamical determinants}. For $k\in \llbracket 0, 3\rrbracket$, let
\begin{equation}
\label{eq:D_f,k}
\forall z\in \mathbb C, |z|\ll 1, \quad \mathcal D_{f,k}(z):=\exp\left( -\sum_{m=1}^{+\infty} \frac {z^m}m\sum_{x\in \mathcal F_m(f)}\frac{\tr(\Lambda^kdf^{-m}(x))}{|\mathrm{det}(\mathrm{Id}-df^{-m}(x))|}\right).
\end{equation}
We will need the following result, see for instance \cite[Theorem 6.2]{Ba}. For any $k\in \llbracket 0, 3\rrbracket$, the dynamical determinant $\mathcal D_{f,k}$ admits a \emph{holomorphic} extension to $\mathbb C$. Moreover,
\begin{equation}
\label{eq:zero}
\forall z\in \mathbb C^*, \quad \mathcal D_{f,k}(z)=0 \ \iff \ z^{-1}\in \mathrm{Res}_k(f),
\end{equation}
 that is, $z$ is a zero of $ \mathcal D_{f,k}$ if and only if  $z^{-1}$ is a Pollicott-Ruelle resonance of $f$ for the action on $k$-forms. Moreover, the multiplicity of the zero coincides with the algebraic multiplicity of $z^{-1}$. Using \eqref{eq:zero}, we now show the following lemma. 
 \begin{lemm}
 \label{lemm1}
 Let $f\in\mathcal A_+^{\infty}(\mathbb T^3)$, then one has
 $$\sup\{|z|, z\in \mathrm{Res}_2(f)\}=e^{P(J^c_f)}, \quad e^{P(J^c_f)}\in \mathrm{Res}_2(f). $$
 We will say that $e^{P(J^c_f)}$ is the first resonance of $f$ for the action on $2$-forms. Moreover, $e^{P(J^c_f)}$ has no Jordan block, has multiplicity $1$ as a Pollicott-Ruelle resonance and it is the only resonance on the \emph{critical circle} $\{ z\in \mathbb C, |z|=e^{P(J^c_f)}\}$.
 \end{lemm}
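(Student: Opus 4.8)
The strategy is to compute the dynamical determinant $\mathcal D_{f,2}$ explicitly via its relation to the Ruelle zeta function, read off its zeros, and then use the factorization $e^{P(J^c_f)}=\lambda_u$ already established in Corollary~\ref{corr12} and the Lemma on $h^u_{\mathrm{top}}(f)\ge P(J^c_f)$. First I would recall the standard identity expressing the zeta function as an alternating product of dynamical determinants,
\begin{equation*}
\zeta_f(z)=\prod_{k=0}^{3}\mathcal D_{f,k}(z)^{(-1)^{k+1}},
\end{equation*}
which follows from the definitions \eqref{eq:Zeta} and \eqref{eq:D_f,k} together with the elementary linear-algebra identity $\sum_{k=0}^{3}(-1)^k\tr(\Lambda^k M^{-1})\cdot|\det(\mathrm{Id}-M^{-1})|^{-1}\cdot\det(\mathrm{Id}-M^{-1})\cdot\sgn=\ldots$; more precisely, for an invertible $3\times 3$ matrix $M$ with real eigenvalues one has $\sum_{k}(-1)^k\tr(\Lambda^k M^{-1})=\det(\mathrm{Id}-M^{-1})$, so the "full alternating sum" of the summands in $\log\mathcal D_{f,k}$ collapses to $\pm1$ for each periodic point, recovering $\log\zeta_f$.

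Next, since each $\mathcal D_{f,k}$ is entire (by the cited \cite[Theorem 6.2]{Ba}) and $\zeta_f$ is the rational function computed in \eqref{eq:zetafA}, I would identify the individual factors by a hyperbolicity/splitting argument: near a periodic point the summand in $\mathcal D_{f,k}$ is controlled by which $k$-dimensional subspaces of the tangent space are expanded, and the anisotropic calculus organizes the resonances of $f_{*,k}$ according to the bundle $\Lambda^k E_{cu}^*$-type weights. For $k=2$, the relevant contributions come from the $2$-forms dual to $E_c\oplus E_u$; matching with the numerator/denominator factors of \eqref{eq:zetafA} forces
\begin{equation*}
\mathcal D_{f,2}(z)=(1-z\lambda_c\lambda_u)(1-z\lambda_s\lambda_u)(1-z\lambda_s\lambda_c)\cdot(\text{no other zeros}),
\end{equation*}
up to checking there are no spurious zeros — for a linear automorphism $A$ this is a direct computation, and since $\zeta_f=\zeta_A$ and the $\mathcal D_{f,k}$ are constrained by the entirety plus the product identity, the general case follows. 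By \eqref{eq:zero}, $\mathrm{Res}_2(f)$ consists of $\lambda_u^{-1}\lambda_c^{-1}$... wait: $z^{-1}\in\mathrm{Res}_2(f)$ iff $\mathcal D_{f,2}(z)=0$, i.e. the resonances are $(\lambda_c\lambda_u)^{-1},(\lambda_s\lambda_u)^{-1},(\lambda_s\lambda_c)^{-1}$. Hmm — that does not match. Let me instead appeal to the dual convention: one should compute $\mathcal D_{f^{-1},1}$ or use that $f_{*,2}$ acting on $2$-currents is dual to $f^*$ on $1$-forms, whose resonances are governed by $\Lambda^1 E_{cu}^*$, giving zeros of the determinant at $z=\lambda_u^{-1},\lambda_c^{-1},\lambda_s^{-1}$ and hence resonances $\lambda_u,\lambda_c,\lambda_s$. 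The largest is $\lambda_u$, it is simple (the eigenvalues $\lambda_s<\lambda_c<\lambda_u$ are distinct, so $z=\lambda_u^{-1}$ is a simple zero), hence no Jordan block and multiplicity one, and it is the unique resonance on $\{|z|=\lambda_u\}=\{|z|=e^{P(J^c_f)}\}$ since $\lambda_c,\lambda_s<\lambda_u$.

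Finally I would tie the numerical value to $e^{P(J^c_f)}$: by Corollary~\ref{corr12} we already know $\lambda_u=e^{h^u_{\mathrm{top}}(f)}$, and combining \eqref{eq:uvaria} (which gives $h^u_{\mathrm{top}}(f)\ge P(J^c_f)$) with a matching reverse inequality obtained from the variational principle \eqref{eq:varia11} and \eqref{eq:HuHuWu} applied to an arbitrary ergodic $u$-MME — or directly from the pressure formula $P(J^c_f)=\sup_\mu(h_\mu(f)-\lambda_c(f,\mu))$ compared against the Ruelle inequality — one gets $e^{P(J^c_f)}=\lambda_u$, so the first resonance is $e^{P(J^c_f)}$ as claimed. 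The main obstacle is the bookkeeping in the second step: correctly identifying which factor of the rational function \eqref{eq:zetafA} is carried by $\mathcal D_{f,2}$ (equivalently, pinning down the convention of whether $f_{*,2}$-resonances are governed by $\Lambda^2 E_{cu}^*$ or by the dual $\Lambda^1 E_{cs}^*$), and ruling out cancellations between numerator and denominator in the product formula so that each $\mathcal D_{f,k}$ is exactly a product of linear factors with no extra zeros. Once the correct determinant $\mathcal D_{f,2}(z)=(1-z\lambda_u)(1-z\lambda_c)(1-z\lambda_s)$ is established, everything else is immediate from \eqref{eq:zero} and the strict ordering of eigenvalues.
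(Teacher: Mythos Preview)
Your approach has a genuine gap at the key step. You want to identify $\mathcal D_{f,2}$ exactly as the polynomial $(1-z\lambda_u)(1-z\lambda_c)(1-z\lambda_s)$, but this is in general \emph{false} for a nonlinear $f\in\mathcal A_+^\infty(\mathbb T^3)$: the dynamical determinant of a smooth Anosov map is an entire function that typically has infinitely many zeros, not three. The alternating product identity $\zeta_f=\prod_k\mathcal D_{f,k}^{(-1)^{k+1}}$ together with the rational expression \eqref{eq:zetafA} constrains only the \emph{ratio}, and it allows arbitrary cancellation between zeros of the $\mathcal D_{f,k}$ in numerator and denominator. Your proposed ``no spurious zeros'' check works for the linear model $A$ (where $f_*$ is diagonalizable on a finite-dimensional cohomology), but there is no mechanism to transfer it to a general $f$ conjugate to $A$: the conjugacy is only H\"older, so it does not intertwine the transfer operators on smooth forms or their anisotropic spectra. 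You essentially acknowledge this obstacle yourself and do not resolve it.

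There is also a circularity issue. In the paper's logical order, the equality $\lambda_u=e^{P(J^c_f)}$ is \emph{deduced} from this lemma (see the end of the proof of Lemma~\ref{lemmfirst}), so you cannot invoke it here. Corollary~\ref{corr12} and \eqref{eq:uvaria} give only $\lambda_u=e^{h^u_{\mathrm{top}}(f)}\ge e^{P(J^c_f)}$; your sketched reverse inequality via \eqref{eq:HuHuWu} or the Ruelle inequality does not close: \eqref{eq:HuHuWu} goes in the same direction, and Ruelle's inequality bounds $P(J^c_f)$ above by $\sup_\mu\lambda_u(f,\mu)$, not below by $h^u_{\mathrm{top}}(f)$.

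The paper's actual proof avoids both issues by never computing $\mathcal D_{f,2}$ globally. It observes that for periodic points $x\in\mathcal F_m(f)$ one has, uniformly in $x$,
\[
\frac{\tr(\Lambda^2 df^{-m}(x))}{|\det(\mathrm{Id}-df^{-m}(x))|}
=\rho^m_c(x)\bigl(1+O(e^{-\nu m})\bigr)
=\exp\Bigl(\sum_{k=0}^{m-1}J^c_f(f^kx)\Bigr)\bigl(1+O(e^{-\nu m})\bigr),
\]
so the zeros of $\mathcal D_{f,2}$ on the outermost circle coincide with those of the \emph{weighted} Ruelle zeta function $\zeta_{f,J^c_f}$. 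Ruelle's theorem for mixing subshifts then gives that $\zeta_{f,J^c_f}$ has a unique, simple zero at $e^{-P(J^c_f)}$ on its circle of convergence, which yields the statement directly---including the absence of Jordan block and the uniqueness on the critical circle---without ever needing to know $\lambda_u=e^{P(J^c_f)}$ or any other resonance of $f_{*,2}$.
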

 \begin{proof}
 From \eqref{eq:zero}, it suffices to locate the first zero of $\mathcal D_{f,2}$ and compute its multiplicity. Let $x\in \mathbb T^3$ and let $n\geq 1$. Since $f\in\mathcal A_+^{\infty}(\mathbb T^3)$, the linear map $df^{-m}(x)$ has three eigenvalues
 $\rho^m_\bullet(x):=\mathrm{det}(df^{-m}(x)|_{E_\bullet(x)})$ for $ \bullet =s,c,u. $ Moreover, there exists constants $C,\nu>0$ such that for any  $x\in \mathbb T^3$ and $m\geq 1$,
 \begin{equation}
 \label{eq:eigen fn}
 \rho^m_s(x)\geq Ce^{\nu m}\geq 1 \geq C e^{-\nu m}\geq \rho^m_c(x)\geq C e^{-\nu m} \rho^m_c(x)\geq \rho^m_u(x).
 \end{equation}
 In particular, we deduce that, when $m\to+\infty$, 
 \begin{equation}
 \label{eq:equiv}
 \begin{split}
 &|\mathrm{det}(\mathrm{Id}-df^{-m}(x))|=\rho^m_s(x)\big(1+O(e^{-\nu m})\big),
 \\ &\tr(\Lambda^2df^{-m}(x))=\rho^m_s(x)\rho^m_c(x)\big(1+O(e^{-\nu m})\big),
 \end{split}
 \end{equation}
  uniformly in $x\in \mathbb T^3$. Plugging \eqref{eq:equiv} into \eqref{eq:D_f,k} gives
  $$ \sum_{x\in \mathcal F_m(f)}\frac{\tr(\Lambda^2df^{-m}(x))}{|\mathrm{det}(\mathrm{Id}-df^{-m}(x))|}=\sum_{x\in \mathcal F_m(f)}\rho_c^m(x)\big(1+O(e^{-\nu m})\big)\sim_{m\to+\infty}\sum_{x\in \mathcal F_m(f)}\rho_c^m(x).$$
Now, using the chain rule and \eqref{eq:Jc}, we obtain
$$ \forall m \geq 1,\ \forall x\in \mathcal F_m(f), \quad \rho^m_c(x)=\mathrm{det}(df^m(f^{-m}x)|_{E_c(x)})^{-1}=\exp\left( \sum_{k=0}^{m-1}J_f^c(f^kx) \right).$$
Since $\rho^m_c(x)>0$, the zeros of $\mathcal D_{f,2}$ on the critical circle are the same as the zeros of
$$\zeta_{f, J^c_f}(z):=\exp\left(-\sum_{m=1}^{+\infty} \frac{z^m}m\sum_{x\in \mathcal F_m(f)} \exp\left( \sum_{k=0}^{m-1}J_f^c(f^kx) \right)\right).$$
We recognize the Ruelle zeta function with potential $J^c_f$. Since $f\in\mathcal A_+^{\infty}(\mathbb T^3)$ is conjugate to a linear automorphism of $\mathbb T^3$, it is topologically mixing.
 By \cite[Theorem 1]{Ru}, there is a $R=R(J^c_f)>e^{-P(J^c_f)}$ such that $\zeta_{f, J^c_f}$ has no zero in $ \overline{B}(0,R)$ except for a simple zero at  $e^{-P(J^c_f)}$. This  shows that $e^{P(J^c_f)}$ is a resonance of algebraic multiplicity equal to $1$ and that  $e^{P(J^c_f)}$ is the only resonance on the critical circle.
 \end{proof}
 The dynamical determinants can be linked to the zeta function, 
\begin{equation}
\label{eq:zetatrace}
\zeta_f(z)=\prod_{k=0}^3 \mathcal D_{f, k}(z)^{(-1)^{k+1}}=\frac{\mathcal D_{f, 1}(z)\mathcal D_{f, 3}(z)}{\mathcal D_{f, 0}(z)\mathcal D_{f, 2}(z)},
\end{equation}
see \cite[Equation $(6.38)$]{Ba}. Using \eqref{eq:zetafA}, we also show the following.
\begin{lemm}
\label{lemmfirst}
Let $f\in\mathcal A_+^{\infty}(\mathbb T^3)$. Then 
$$\sup\{|z|, z\in \mathrm{Res}_2(f)\}=\lambda_u. $$
\end{lemm}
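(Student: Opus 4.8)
The plan is to deduce the lemma by combining the results already established with the explicit rational form of the Ruelle zeta function. First I would dispose of the inequality $\sup\{|z|,\ z\in\mathrm{Res}_2(f)\}\leq\lambda_u$: by \eqref{eq:ineq} together with Corollary \ref{corr12} one has $\ln\lambda_u=h^u_{\mathrm{top}}(f)\geq P(J^c_f)$, hence $e^{P(J^c_f)}\leq\lambda_u$; since Lemma \ref{lemm1} identifies the supremum on the left with $e^{P(J^c_f)}$, this gives the desired bound at once.

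For the reverse inequality I would exploit the factorization \eqref{eq:zetatrace}, namely $\zeta_f=\mathcal D_{f,1}\mathcal D_{f,3}/(\mathcal D_{f,0}\mathcal D_{f,2})$, in which all four dynamical determinants are entire. Evaluating the explicit expression \eqref{eq:zetafA}, the function $\zeta_f$ has a pole at $z=1/\lambda_u$: the factor $(1-z\lambda_u)$ of the denominator $(1-z\lambda_s)(1-z\lambda_c)(1-z\lambda_u)$ vanishes (to order one) there, while the numerator $(1-z\lambda_s\lambda_c)(1-z\lambda_s\lambda_u)(1-z\lambda_c\lambda_u)$ does not, since none of $\lambda_s$, $\lambda_c$, $\lambda_s\lambda_c/\lambda_u$ equals $1$ (using $\lambda_s<1<\lambda_c<\lambda_u$). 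Because the numerator $\mathcal D_{f,1}\mathcal D_{f,3}$ of the factorization is holomorphic on all of $\mathbb C$, this pole must be accounted for by a zero of $\mathcal D_{f,0}\mathcal D_{f,2}$ at $z=1/\lambda_u$, so that $\mathcal D_{f,0}$ or $\mathcal D_{f,2}$ vanishes there.

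To decide which one, I would invoke the classical description of the leading resonance on functions recalled in the introduction (\cite{Ba,GouLiv}): the first Pollicott--Ruelle resonance for the action on $0$-forms is $1$, i.e. $\mathrm{Res}_0(f)\subset\{|z|\leq 1\}$, so that by \eqref{eq:zero} the dynamical determinant $\mathcal D_{f,0}$ has no zero in the open unit disc. Since $0<1/\lambda_u<1$, it follows that $\mathcal D_{f,2}(1/\lambda_u)=0$, and then \eqref{eq:zero} again yields $\lambda_u\in\mathrm{Res}_2(f)$, whence $\sup\{|z|,\ z\in\mathrm{Res}_2(f)\}\geq\lambda_u$. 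Combining with the upper bound from the first paragraph gives the claimed equality (and, as a by-product, $e^{P(J^c_f)}=\lambda_u$).

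The only non-formal ingredient is the last step, that the pole of $\zeta_f$ at $1/\lambda_u$ is absorbed by $\mathcal D_{f,2}$ and not by $\mathcal D_{f,0}$; I would regard this as the crux of the argument. It could alternatively be obtained from the positivity of the Taylor coefficients of $\mathcal D_{f,0}$ at the origin together with a lower bound on its radius of convergence, but quoting the well-known $k=0$ statement is shorter and sidesteps the delicate convergence analysis of the periodic-orbit series in the non-linear case.
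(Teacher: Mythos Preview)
Your argument is correct and follows essentially the same route as the paper: both use the factorization \eqref{eq:zetatrace} together with the explicit formula \eqref{eq:zetafA} to see that $1/\lambda_u$ is a pole of $\zeta_f$, rule out $\mathcal D_{f,0}$ via $\mathrm{Res}_0(f)\subset\{|z|\le 1\}$, and then combine Lemma~\ref{lemm1}, Corollary~\ref{corr12}, and \eqref{eq:ineq}. The only cosmetic differences are that you present the upper bound first and quote the $k=0$ statement from the literature, whereas the paper gives the elementary one-line argument (boundedness of $(f_*-z)^{-1}$ on $L^\infty$ for $|z|>1$) directly.
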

\begin{proof}
Combining \eqref{eq:zetatrace} and \eqref{eq:zetafA}, we get
\begin{equation}
\label{eq:useful}\frac{\mathcal D_{f, 1}(z)\mathcal D_{f, 3}(z)}{\mathcal D_{f, 0}(z)\mathcal D_{f, 2}(z)}= \frac{(1-z\lambda_s\lambda_c)(1-z\lambda_s\lambda_u)(1-z\lambda_c\lambda_u)}{(1-z\lambda_s)(1-z\lambda_c)(1-z\lambda_u)}.
\end{equation}
Since the dynamical determinants are holomorphic, the pole $\lambda_u^{-1}$ can only come from either a zero of $\mathcal D_{f, 0}$ or a zero of $\mathcal D_{f, 2}$. In other words, we have either $\lambda_u\in \mathrm{Res}_0(f)$ or  $\lambda_u\in\mathrm{Res}_{2}(f)$. Now, for any $g\in C^{\infty}(\mathbb T^3)$, one has $\|f_*g\|_{\infty}\leq \|g\|_{\infty}$. Hence, for any $z\in \mathbb C$ such that $|z|>1$, the resolvent $(f_*-z)^{-1}:L^{\infty}(\mathbb T^3)\to L^{\infty}(\mathbb T^3)$ is defined by 
$$ (f_*-z)^{-1}g=z^{-1}\sum_{m=0}^{+\infty}\frac{f^n_* g}{z^n}.$$
In particular, this shows that $\mathrm{Res}_0(f)\subset \{ z \in \mathbb C, \ |z|\leq 1\}$ and thus $\lambda_u\notin \mathrm{Res}_0(f)$. We deduce that $\lambda_u\in \mathrm{Res}_2(f)$ is a Pollicott-Ruelle resonance. Using Lemma \ref{lemm1}, this shows that $\lambda_u\leq e^{P(J^c_f)}$. By Corollary \ref{corr12}, this means that $h^u_{\mathrm{top}}(f)\leq P(J^c_f)$. Using \eqref{eq:ineq} we deduce $h^u_{\mathrm{top}}(f)= P(J^c_f)$, which concludes the proof of the lemma and of \eqref{eq:equality}.
\end{proof}
We show that the spectral projector at the first resonance is equal to~$\mu_{J^c_f}$. \begin{lemm}
\label{lemm2}
Let $f\in\mathcal A_+^{\infty}(\mathbb T^3)$. The trace of the spectral projector at the first resonance $e^{P(J^c_f)}$ is given by the product of a resonant state $\theta$ and a co-resonant state $\nu$ and is equal to the equilibrium state $\mu_{J^c_f}$.
\end{lemm}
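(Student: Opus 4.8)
The plan is to realize the spectral projector $\Pi := \Pi_2^{\lambda_u}$ at the first resonance $\lambda_u = e^{P(J^c_f)}$ as an explicit rank-one operator built from a resonant state $\theta$ and a co-resonant state $\nu$, and then to compute its flat trace by the Atiyah--Bott fixed point formula, comparing the outcome with the classical equidistribution of periodic orbits weighted by the potential $J^c_f$.

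First I would use Lemma \ref{lemm1}: $\lambda_u$ is a simple Pollicott--Ruelle resonance of $f_{*,2}$ with no Jordan block and is the unique resonance on the critical circle $\{|z| = \lambda_u\}$. Together with the essential-spectral-radius bound $e^{-cs}$ on $\mathcal H^s(\mathbb T^3;\Omega_2)$ (choosing $s$ large), this produces a spectral gap, hence exponential operator-norm convergence $\lambda_u^{-n}f_{*,2}^n \to \Pi$ on $\mathcal H^s(\mathbb T^3;\Omega_2)$. Since $\Pi$ has rank one and $\Pi^2 = \Pi$, I can write $\Pi\omega = (\nu,\omega)_{\mathcal D'_1\times\Omega_2}\,\theta$ with $\theta\in\mathrm{Res}_{2,\lambda_u}(f)\subset\mathcal D'_2$, a co-resonant state $\nu\in\mathcal D'_1$ satisfying $f^*\nu = \lambda_u\nu$, and normalization $(\nu,\theta) = 1$. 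Since $f_{*,2}\theta = \lambda_u\theta$ implies $f^*\theta = \lambda_u^{-1}\theta$, and since $\mathrm{WF}(\theta)\subset E_{cu}^*$ and $\mathrm{WF}(\nu)\subset E_s^*$ are carried by transverse subbundles of $T^*\mathbb T^3$, the product $\theta\wedge\nu\in\mathcal D'_3(\mathbb T^3)$ is a well-defined distribution on $\mathbb T^3$, with $f^*(\theta\wedge\nu) = (f^*\theta)\wedge(f^*\nu) = (\lambda_u^{-1}\theta)\wedge(\lambda_u\nu) = \theta\wedge\nu$ (so it is $f$-invariant) and total mass $\int_{\mathbb T^3}\theta\wedge\nu = (\nu,\theta) = 1$. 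Moreover $\theta\wedge\nu$ represents the flat-trace density of $\Pi$: for $\phi\in C^\infty(\mathbb T^3)$ the operator $\phi\,\Pi$, $(\phi\,\Pi)\omega = (\nu,\omega)_{\mathcal D'_1\times\Omega_2}\,\phi\theta$, is again rank one and $\tr^\flat(\phi\,\Pi) = \langle\theta\wedge\nu,\phi\rangle$.

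Then I would compute $\langle\theta\wedge\nu,\phi\rangle$ dynamically, in two ways that must agree. On the spectral side, the flat trace is sequentially continuous on the class of operators whose Schwartz kernels have wavefront set in a fixed conic set avoiding the conormal of the diagonal — which contains $\phi\,\Pi$ and all $\lambda_u^{-n}\phi f_{*,2}^n$ by the Anosov structure, cf.\ \cite{DFG,Ba} — so the operator convergence above gives $\lambda_u^{-n}\tr^\flat(\phi f_{*,2}^n)\to\tr^\flat(\phi\,\Pi) = \langle\theta\wedge\nu,\phi\rangle$. On the dynamical side, all periodic points of $f$ are hyperbolic, so the Atiyah--Bott formula yields $\tr^\flat(\phi f_{*,2}^n) = \sum_{x\in\mathcal F_n(f)}\phi(x)\,\frac{\tr(\Lambda^2 df^{-n}(x))}{|\det(\mathrm{Id}-df^{-n}(x))|}$; plugging in the asymptotics established in the proof of Lemma \ref{lemm1}, namely $\frac{\tr(\Lambda^2 df^{-n}(x))}{|\det(\mathrm{Id}-df^{-n}(x))|} = e^{S_nJ^c_f(x)}(1+O(e^{-\nu n}))$ uniformly in $x\in\mathcal F_n(f)$ with $S_n g = \sum_{k=0}^{n-1}g\circ f^k$, and recalling $\lambda_u = e^{P(J^c_f)}$, this becomes $\lambda_u^{-n}\tr^\flat(\phi f_{*,2}^n) = e^{-nP(J^c_f)}\sum_{x\in\mathcal F_n(f)}\phi(x)e^{S_nJ^c_f(x)}(1+O(e^{-\nu n}))$. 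By the classical theory of equilibrium states for topologically mixing Anosov maps (applicable since $f$ is conjugate to a linear automorphism of $\mathbb T^3$; see \cite[Chapter 20]{HK} and \cite{Ru}), $e^{-nP(J^c_f)}\sum_{x\in\mathcal F_n(f)}\phi(x)e^{S_nJ^c_f(x)}$ converges to $c_0\int_{\mathbb T^3}\phi\,d\mu_{J^c_f}$ for a constant $c_0 > 0$ independent of $\phi$. Comparing the two limits gives $\langle\theta\wedge\nu,\phi\rangle = c_0\int\phi\,d\mu_{J^c_f}$ for every $\phi$, hence $\theta\wedge\nu = c_0\,\mu_{J^c_f}$; taking $\phi\equiv 1$ and using $\int\theta\wedge\nu = 1$ forces $c_0 = 1$. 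Therefore the trace of the spectral projector, $\tr^\flat\Pi = \langle\theta\wedge\nu,1\rangle$, is represented by the $f$-invariant probability measure $\theta\wedge\nu = \mu_{J^c_f}$ (in particular $\theta\wedge\nu$ is positive, which could also be read off from the leafwise structure of $\theta$ and $\nu$), which is the claim.

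The step I expect to be most delicate is the flat-trace continuity $\lambda_u^{-n}\tr^\flat(\phi f_{*,2}^n)\to\tr^\flat(\phi\,\Pi)$: one has to check that multiplying by $\phi\in C^\infty(\mathbb T^3)$ and composing with powers of $f_{*,2}$ keeps the kernels' wavefront sets inside a conic neighbourhood on which the flat trace is sequentially continuous — here the hyperbolic dynamics, not merely the spectral data of Lemma \ref{lemm1}, is what is used — and that the spectral gap on $\mathcal H^s$ really yields norm convergence of $\lambda_u^{-n}f_{*,2}^n$ to $\Pi$. The remaining ingredients (Atiyah--Bott localization at hyperbolic fixed points, equidistribution of periodic orbits for $J^c_f$) are classical.
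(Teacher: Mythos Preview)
Your argument is correct and follows the same overall strategy as the paper: write the rank-one projector as $\Pi=\nu(\cdot)\theta$, compute $\tr^\flat(\phi f_{*,2}^n)$ by the Guillemin/Atiyah--Bott fixed point formula, use the asymptotics $\tfrac{\tr(\Lambda^2 df^{-n})}{|\det(\mathrm{Id}-df^{-n})|}=e^{S_nJ^c_f}(1+O(e^{-\nu n}))$ from Lemma~\ref{lemm1}, and identify the limit with $\mu_{J^c_f}$ via Bowen's formula. The one technical difference is exactly the step you single out as delicate: instead of passing to the limit $\lambda_u^{-n}\tr^\flat(\phi f_{*,2}^n)\to\tr^\flat(\phi\,\Pi)$ through a flat-trace continuity argument, the paper works with the meromorphic function $Z_{g,f}(z)=\tr^\flat\big(g(z-f_{*,2})^{-1}\big)$, reads off its residue at the simple pole $\lambda_u$ as $(\nu\wedge\theta)(g)$, and then recovers the coefficient of $\lambda_u^n$ in $\tr^\flat(g f_{*,2}^n)$ by Cauchy's formula on a circle of radius $\rho<\lambda_u$. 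This contour-shifting route uses only that $\tr^\flat(gR_2(z))$ extends meromorphically (a standard byproduct of the anisotropic construction) and cleanly bypasses the sequential-continuity issue; otherwise the two proofs are the same.
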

\begin{proof} Let $g\in C^{\infty}(\mathbb T^3,\mathbb R_+)$ be a smooth and positive function. 
 The Guillemin trace formula (see for instance \cite[Proposition 6.3]{BaTs}) gives for any $m\geq 0$, 
$$\tr^{\flat}\big(gf_{*,2}^m\big)=\sum_{x\in \mathcal F_m(f)}\frac{g(x)\tr(\Lambda^2df^{-m}(x))}{|\mathrm{det}(\mathrm{Id}-df^{-m}(x))|}, $$
where $\tr^{\flat}$ denotes the \emph{flat trace} and $f_{*,2}$ is defined in \eqref{eq:Lie}. For any $z\in \mathbb C$, such that $|z|\ll 1$, define $Z_{g,f}(z):=\tr^{\flat}\big(g(z-f_{*,2})^{-1}\big)$. For $|z|\ll 1$,
\begin{equation}
\label{eq:Zfg}
Z_{g,f}(z)=\sum_{m=0}^{+\infty}z^{-m-1}\tr^{\flat}\big(gf_{*,2}^m\big)=\sum_{m=0}^{+\infty}z^{-m-1}\sum_{x\in \mathcal F_m(f)}\frac{g(x)\tr(\Lambda^2df^{-m}(x))}{|\mathrm{det}(\mathrm{Id}-df^{-m}(x))|}.
\end{equation}
%In the above computation, the commutation of the sum and trace is justified for $|z|\ll 1$ since the flat trace can be interpreted as the trace on the anisotropic spaces. 
Note that $Z_{g,f}$ admits a meromorphic extension to $\mathbb C\setminus \{0\}$ since the resolvent does.
Using Lemma \ref{lemm1}, we see that $Z_{g,f}$ has a first simple pole at $z=\lambda_u$ and no other pole outside of $B(0, \lambda_u^{-1}-\epsilon)$ for some $\epsilon>0$. We use Lemma \ref{lemm1} and write the spectral projector at the first resonance $\lambda_u$ as 
$ \Pi_0(\lambda_u)=\nu(\cdot) \theta,$
where $\theta$ (resp.  $\nu$) is a resonant (resp. co-resonant state) which are normalized so that $(\nu,\theta)_{\mathcal H^{-s}\times \mathcal H^s}=1$. Recall that near $z=\lambda_u^{-1}$, 
\begin{align*}\tr^{\flat}\big(g(z-f_{*,2})^{-1}\big)&=-\tr^{\flat}\big(g(f_{*,2}-z)^{-1}\big)=\tr^{\flat}\Big(g\big(\tfrac{\Pi_0}{z-\lambda_u}+h(z)\big) \Big),
\end{align*}
for some holomorphic map $h$.
In particular, the residue at $\lambda_u$ is given by
\begin{align*}\mathrm{Res}_{\lambda_u}(Z_{g,f})&=\tr(g\Pi_0(\lambda_u))=\nu(g\times  \theta)=(\nu, g\theta)_{\mathcal H^{-s}\times \mathcal H^s}=:(\nu\wedge \theta)(g).
\end{align*}
Next, we use Cauchy's formula on \eqref{eq:Zfg} to obtain, for some $\rho<\lambda_u^{-1}$,
\begin{align*}\sum_{x\in \mathcal F_m(f)}\frac{g( x)\tr(\Lambda^2df^{-m}(x))}{|\mathrm{det}(\mathrm{Id}-df^{-m}(x))|}=&\mathrm{Res}_{\lambda_u}(Z_{g,f}(z)z^{m+1})+\frac 1{2i\pi}\int_{\rho \mathbb S^1}Z_{g,f}(z)z^{m-1}dz.
\end{align*}
Since $Z_{g,f}$ has a simple first pole $\lambda_u$, one has
$$\sum_{x\in \mathcal F_m(f)}\frac{g(x)\tr(\Lambda^2df^{-m}(x))}{|\mathrm{det}(\mathrm{Id}-df^{-m}(x))|} =(\nu\wedge \theta)(g)\lambda_u^{m}(1+O(e^{-\epsilon m})),$$
for some $\epsilon>0$. Using \eqref{eq:equiv}, this gives
\begin{equation}
\label{eq:Bowen}
(\nu\wedge \theta)(g)=\lim_{m\to+\infty} \lambda_u^{-m}\sum_{x\in \mathcal F_m(f)}\exp\left( -\sum_{k=0}^{m-1}J_f^c(f^kx) \right)g(x).
\end{equation}
Using Lemmas \ref{lemm1} and \ref{lemm2}, for $g\equiv 1$, we obtain
\begin{align*}\lim_{m\to+\infty} &\lambda_u^{m}\sum_{x\in \mathcal F_m(f)}\mathrm{det}(df^m(x)|_{E_c(x)})=\lim_{m\to+\infty} e^{mP(J^c_f)}\sum_{x\in \mathcal F_m(f)}\mathrm{det}(df^m(x)|_{E_c(x)})=1. 
\end{align*}
This means that \eqref{eq:Bowen} is Bowen's formula for the equilibrium state (see for instance $\mu_{J^c_f}$ \cite[Theorem 20.3.7]{HK}) and this concludes the proof.
\end{proof}
 Recall that by Proposition \ref{prop1st}, we know that the space of (co)-resonant states at $\lambda_u$ is one dimensional and thus spanned by a (co)-resonant state $\theta$ and $\nu$.  \begin{lemm}
\label{lemm0}
For any $s>0$ large enough and for any  $\omega \in \mathcal H^s$, one has
\begin{equation}
\label{eq:conv}
\exists c_\omega\in \mathbb R, \quad \lim_{n\to+\infty}\lambda_u^{-n}(f_*)^n\omega= c_\omega \theta,
\end{equation}
where the limit exists in the anisotropic space $\mathcal H^s$. Similarly, for $\eta \in \mathcal H^{-s}$, one has
\begin{equation}
\label{eq:conv2}
\exists c_\eta\in \mathbb R, \quad \lim_{n\to+\infty}\lambda_u^{-n}(f^*)^n\eta= c_\eta \nu,
\end{equation}
where the limit exists in the anisotropic space $\mathcal H^{-s}$.
\end{lemm}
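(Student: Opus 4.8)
The plan is to obtain both \eqref{eq:conv} and \eqref{eq:conv2} as a routine consequence of the spectral gap on the critical circle established in Lemma~\ref{lemm1}, together with the identification of the first spectral projector carried out in Lemma~\ref{lemm2}. Recall from Lemma~\ref{lemm1} that $\lambda_u$ is the unique Pollicott--Ruelle resonance of $f_{*,2}$ on the circle $\{|z|=\lambda_u\}$, that it has algebraic multiplicity one and no Jordan block. Since the resonances $\mathrm{Res}_2(f)$ are intrinsic (they do not depend on the choice of anisotropic space) and can only accumulate at $0$, the quantity $\rho^\ast:=\sup\{|z| \mid z\in \mathrm{Res}_2(f),\ |z|<\lambda_u\}$ satisfies $\rho^\ast<\lambda_u$; fix some $\rho\in(\rho^\ast,\lambda_u)$. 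The first step is then to pick $s>0$ large enough that $e^{-cs}<\rho$, with $c>0$ the constant from \S\ref{secRuelle}; for such $s$ the resolvent $R_2(\lambda)$ is meromorphic on $\{|\lambda|>\rho^{-1}\}$ with poles exactly at the $z^{-1}$ for $z\in\mathrm{Res}_2(f)$, $|z|\geq\rho$, so the spectrum of $f_{*,2}$ acting on $\mathcal H^s:=\mathcal H^s(\mathbb T^3;\Omega_2)$ inside $\{|z|\geq\rho\}$ reduces to the single simple eigenvalue $\lambda_u$.

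Next I would invoke the Riesz functional calculus. The rank-one spectral projector $\Pi_0=\Pi_0(\lambda_u)$, which has no Jordan block by Lemma~\ref{lemm1}, provides an $f_{*,2}$-invariant splitting $\mathcal H^s=\mathrm{Ran}(\Pi_0)\oplus\ker(\Pi_0)$ with $\mathrm{Ran}(\Pi_0)=\mathbb C\,\theta$, on which $f_{*,2}$ acts by multiplication by $\lambda_u$, and with $\ker(\Pi_0)$ on which $f_{*,2}$ has spectral radius $\leq\rho<\lambda_u$. Writing
$$\lambda_u^{-n}(f_*)^n=\Pi_0+\lambda_u^{-n}(f_*)^n(\mathrm{Id}-\Pi_0)$$
and estimating the second summand via the spectral radius formula on $\ker(\Pi_0)$, one obtains $\lambda_u^{-n}(f_*)^n\to\Pi_0$ in operator norm on $\mathcal H^s$, at geometric speed $O\big((\rho'/\lambda_u)^n\big)$ for any fixed $\rho'\in(\rho,\lambda_u)$. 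By Lemma~\ref{lemm2}, $\Pi_0=(\nu,\cdot)_{\mathcal H^{-s}\times\mathcal H^s}\,\theta$, so for every $\omega\in\mathcal H^s$ one gets
$$\lambda_u^{-n}(f_*)^n\omega\ \longrightarrow\ (\nu,\omega)_{\mathcal H^{-s}\times\mathcal H^s}\,\theta\qquad\text{in }\mathcal H^s,$$
which is \eqref{eq:conv} with $c_\omega:=(\nu,\omega)_{\mathcal H^{-s}\times\mathcal H^s}$; this constant is real whenever $\omega$ is a real current, since $\lambda_u$ is a real simple resonance and hence $\theta$ and $\nu$ may be chosen real.

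For \eqref{eq:conv2} I would run the identical argument for the pullback $f^*$ on the dual anisotropic spaces, that is, apply the construction of \S\ref{secRuelle} to $f^{-1}$. By Proposition~\ref{prop1st} the space of co-resonant states at $\lambda_u$ is one-dimensional and $\lambda_u$ has no Jordan block there, so $\lambda_u$ is again a simple isolated eigenvalue of $f^*$ acting on $\mathcal H^{-s}$, whose rank-one spectral projector is the transpose of $\Pi_0$, namely $\eta\mapsto(\eta,\theta)_{\mathcal H^{-s}\times\mathcal H^s}\,\nu$. The same Riesz-projector estimate then gives $\lambda_u^{-n}(f^*)^n\eta\to(\eta,\theta)_{\mathcal H^{-s}\times\mathcal H^s}\,\nu$ in $\mathcal H^{-s}$ for every $\eta\in\mathcal H^{-s}$, which is \eqref{eq:conv2}.

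I do not expect a genuine obstacle here: the lemma only repackages the spectral gap of Lemma~\ref{lemm1} through the elementary Riesz-projector decomposition, together with the identity $\Pi_0=(\nu,\cdot)\theta$ of Lemma~\ref{lemm2}. The only point demanding a little care is checking that the choice of the anisotropic order $s$ is compatible with the location of the first resonances, and this is harmless precisely because $\mathrm{Res}_2(f)$ does not depend on $s$ and $\lambda_u$ is isolated in it.
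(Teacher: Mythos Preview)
Your proposal is correct and follows essentially the same route as the paper: isolate $\lambda_u$ via Lemma~\ref{lemm1}, choose $s$ large enough that the anisotropic resolvent is meromorphic past the spectral gap, split $f_{*,2}=\lambda_u\Pi_0+K$ with $r(K)<\lambda_u$ using the Riesz projector, and conclude $\lambda_u^{-n}f_{*,2}^n\to\Pi_0$ in operator norm by the spectral radius formula (and dually for $f^*$). Two small slips to clean up: the resolvent $R_2(\lambda)$ has its poles at the resonances $z\in\mathrm{Res}_2(f)$ themselves, not at $z^{-1}$, and the corresponding region of meromorphy should read $\{|\lambda|>\rho\}$ rather than $\{|\lambda|>\rho^{-1}\}$; neither affects the argument.
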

\begin{proof}
 By Lemma \ref{lemm1}, there is a $R>0$ such that $\lambda_u$ is the only Pollicott-Ruelle resonance in $\mathbb C\setminus B(0,R)$ and it is simple with no Jordan block. We fix $s$ such that $e^{-cs}<R$, so that $
R_k(\lambda)= \mathcal H^s\to \mathcal H^s$ has a meromorphic extension to $\{|\lambda|> R\}$. In particular, denoting by $\Pi_0:\mathcal H^s\to \mathcal H^s$ the spectral projector on $\mathrm{Span}(\theta)$, there exists a bounded operator $K :\mathcal H^s\to \mathcal H^s$ satisfying
$$ f_{*,k}=\lambda_u \Pi_0+K, \quad K\Pi_0=\Pi_0K=0, \quad r(K)<\lambda_u,$$
where $r(K)$ is the spectral radius of $K$. For any $\epsilon>0$, recall that one has
$$\exists C>0, \quad\forall n\geq0, \quad \|K^n\|_{\mathcal H^s\to \mathcal H^s}\leq C(r(K)+\epsilon)^n. $$
If $\epsilon>0$ is chosen so that $r(K)+\epsilon<\lambda_u$, then, as bounded operators $\mathcal H^s\to \mathcal H^s$, 
$$ \lambda_u^{-n}f_{*,k}^n=\lambda_u^{-n}(\lambda_ u\Pi_0+K)^n\to_{n\to+\infty}\Pi_0. $$
Let $\omega \in \mathcal H^s$, then there is $c_\omega\in \mathbb R$ such that $\Pi_0(\omega)=c_\omega \theta$. We obtain \eqref{eq:conv} by applying the previous convergence to~$\omega$. The proof of \eqref{eq:conv2} is analogous.
\begin{rem}
\label{rem:Gabriel}
The resonant $($resp. co-resonant$)$ state $\theta$ $($resp. $\nu)$ is closed. Indeed, since $d$ commutes with the pushforward (resp. pullback) and preserves the spaces of currents with wavefront set in $E_s^*$ $($resp. $E_u^*)$, one has $d\theta \in \mathrm{Res}_{3,\lambda_u}(f)$ $($resp. $d\nu \in \mathrm{Res}^*_{2,\lambda_u}(f))$. A similar argument as in the proof of Lemma \ref{lemmfirst} shows that $\mathrm{Res}_3(f)\subset \{z\in \mathbb C, \ |z|\leq 1\}$ which then implies that $d\theta=0$. For the co-resonant state $\nu$, we can use \eqref{eq:useful} with \eqref{eq:zero} to conclude that $\mathrm{dim}(\mathrm{Res}^*_{1,\infty,\lambda_u}(f))-\mathrm{dim}(\mathrm{Res}^*_{2,\infty,\lambda_u}(f))=1$. Since Lemma \ref{lemm1} shows that  $\mathrm{dim}(\mathrm{Res}^*_{1,\infty,\lambda_u}(f))=1$, we deduce that $\mathrm{Res}^*_{2,\infty,\lambda_u}(f))=\{0\}$ and thus $d\nu =0$. This should be compared with the work of Ruelle and Sullivan on the system of Margulis measures for an Anosov diffeomorphism, see \cite[Corollary p.326]{RS}.
\end{rem}
\end{proof}
\subsection{Measure of maximal $u$-entropy which is not a measure of maximal entropy.}
In this subsection, we show Corollary \ref{cor1}.
\begin{proof}
By \cite[Theorem 20.3.9]{HK}, $\mu_{J^c_f}$ is a measure of maximal entropy if and only if $J^c_f $ is cohomologous to a constant. By the Livsic theorem \cite[Theorem 19.2.1]{HK}, this  holds if and only if for any periodic point $p$ of period $n(p)$, one has
$$\frac1{n(p)}\sum_{k=0}^{n(p)-1} J^c_f(f^kp)=\lambda_c.$$
The fact that the constant has to be equal to $\lambda_c$ follows from \cite[Corollary 2.6]{GaSh}. This holds if and only if $E_u$ and $E_s$ are jointly integrable by \cite[Theorem 5.1]{GaSh}.
\end{proof}
\section{System of Margulis measures as (co)-resonant states.}
\label{sec5}
In this section, we show that the resonant (resp. co-resonant) state at the first resonance $\lambda_u$ is given by the system of center stable (resp. strong unstable) measures. The main technical result of this section is that the system of $u$-measure is invariant with respect to the center stable holonomies.
\begin{prop}
\label{propdur}
Let $f\in\mathcal A_+^{\infty}(\mathbb T^3)$. Let $(\mu^u_x)_{x\in \mathbb T^3}$ be the system of  $u$-measures constructed in Theorem \ref{leafmeasure}. Let $R$ be a rectangle. For any $x,z\in R$, one has
\begin{equation}
\label{eq:abso}
(\mathrm{Hol}^{cs}_{x,z})_*\mu^u_z= \mu^u_x.
\end {equation}
\end{prop}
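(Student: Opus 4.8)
The plan is to obtain \eqref{eq:abso} \emph{indirectly}, by transferring holonomy invariance from the $cs$-side --- where it is accessible through the functional approach of \S\ref{sec4} and the absolute continuity of the smooth foliation $\mathcal W^u$ --- to the $u$-side, using the fact that $\mu_{J^c_f}$ couples the two systems. So I would first establish, as announced in \S\ref{seccomp}, three inputs: (i) the $cs$-system $(\mu^{cs}_x)_{x\in\mathbb T^3}$ is built from the resonant state of \S\ref{sec4}, is $\mathrm{Hol}^u$-invariant, and agrees $\mu_{J^c_f}$-a.e.\ with the $cs$-conditionals of $\mu_{J^c_f}$ on a rectangle; (ii) since $\mu_{J^c_f}\in\mathcal M^u(f)$ by Corollary \ref{lemmmuJ}, Corollary \ref{cor} identifies its $u$-conditionals on a rectangle with $(\mu^u_x)_{x\in\mathbb T^3}$ up to normalization, $\mu_{J^c_f}$-a.e.; (iii) $\mu_{J^c_f}$ has a local product structure on rectangles with conditionals $(\mu^u_x)_{x\in\mathbb T^3}$ and $(\mu^{cs}_x)_{x\in\mathbb T^3}$. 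Only with (i)--(iii) in hand would I prove \eqref{eq:abso}.

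For the main step, fix a rectangle $R=[\mathcal W^u_R(q),\mathcal W^{cs}_R(q)]$ with $q$ a $\mu_{J^c_f}$-generic reference point so that (i) and (ii) apply at $q$, and write $\Phi(a,b)=[a,b]$ for the product chart, with $a$ ranging over $\mathcal W^u_R(q)$ and $b$ over $\mathcal W^{cs}_R(q)$; note that the slice $a\mapsto[a,b]$ is exactly $\mathrm{Hol}^{cs}_{q,b}$ and the slice $b\mapsto[a,b]$ is exactly $\mathrm{Hol}^u_{q,a}$. By (iii), $\mu_{J^c_f}|_R\propto\Phi_*\big(\Delta\cdot(\mu^u_q\otimes\mu^{cs}_q)\big)$ for some continuous positive density $\Delta$. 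Disintegrating this measure along $\mathcal W^{cs}$ identifies the $cs$-conditional on $\mathcal W^{cs}_R(a)$ with $(\mathrm{Hol}^u_{q,a})_*\big(\Delta(a,\cdot)\,\mu^{cs}_q\big)$; comparing with $\mu^{cs}_a$ via (i) and using the $\mathrm{Hol}^u$-invariance of $(\mu^{cs}_x)_{x\in\mathbb T^3}$ forces $\Delta(a,\cdot)$ to be constant, hence $\Delta(a,b)=\Delta(a)$ by continuity. Feeding this back and disintegrating along $\mathcal W^u$, the $u$-conditional of $\mu_{J^c_f}$ on $\mathcal W^u_R(b)$ is $(\mathrm{Hol}^{cs}_{q,b})_*\big(\Delta\cdot\mu^u_q\big)$; taking $b=q$ (so $\mathrm{Hol}^{cs}_{q,q}=\mathrm{Id}$) and invoking (ii) forces $\Delta$ itself to be constant, leaving $\mu^u_b\propto(\mathrm{Hol}^{cs}_{q,b})_*\mu^u_q$ for $\mu_{J^c_f}$-a.e.\ $b$ --- i.e.\ \eqref{eq:abso} up to a scalar on a full-measure set of leaves. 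This chain of deductions is the adaptation of \cite[Lemma 2.12]{CPZ20} referred to in the introduction.

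It then remains to promote this to every pair $x,z\in R$ and to fix the scalar to $1$. For the first point I would use that $b\mapsto(\mathrm{Hol}^{cs}_{q,b})_*\mu^u_q$ is weakly continuous (the $cs$-holonomies are continuous in the base point, even though not absolutely continuous) and that the identity is $f$-equivariant via $f\circ\mathrm{Hol}^{cs}_{x,z}=\mathrm{Hol}^{cs}_{f(x),f(z)}\circ f$; combined with the scaling property \eqref{eq:Marg}, with density of the good leaves (full support of $\mu_{J^c_f}$), and with consistency of $\mathrm{Hol}^{cs}$-invariance on overlapping rectangles, this yields \eqref{eq:abso} for all $x,z$. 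The remaining scalar is a positive $f$-invariant $\mathrm{Hol}^{cs}$-cocycle, and is fixed to $1$ by the appropriate normalization of the Margulis system together with \eqref{eq:Marg} and $\mathrm{Hol}^{cs}_{x,x}=\mathrm{Id}$.

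The hard part is \emph{not} any individual one of these computations but the fact that the naive routes are closed off: because $\mathcal W^{cs}$ is only continuous --- and not even absolutely continuous, let alone $C^1$ --- neither the bounded-distortion argument of \cite{CPZ20,PP} nor the compactness argument used for $(\mu^u_x)_{x\in\mathbb T^3}$ in \cite{BFT,CH2} and in Proposition \ref{leafmeasure} produces $\mathrm{Hol}^{cs}$-invariance directly. The whole point of the plan is to import the missing regularity from the $cs$-side, where $(\mu^{cs}_x)_{x\in\mathbb T^3}$ is a genuine Pollicott--Ruelle resonant state whose $\mathrm{Hol}^u$-invariance comes for free from absolute continuity of $\mathcal W^u$, and then to pass it to the $u$-side through the rigidity of $\mu_{J^c_f}$: the local product structure (iii) and the characterization of $u$-MME's in Corollary \ref{cor} are precisely what make this transfer possible.
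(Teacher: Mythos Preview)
Your overall strategy --- import $\mathrm{Hol}^u$-invariance on the $cs$-side via the resonant state and transfer it to the $u$-side through $\mu_{J^c_f}$ --- is exactly the paper's. The inputs (i) and (ii) are precisely Propositions~\ref{propholu}, \ref{prophol} and Corollaries~\ref{cor}, \ref{lemmmuJ}. However, your implementation of the transfer step differs from the paper's and carries some unjustified assumptions.

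The problematic point is your input (iii). In the paper the local product structure \eqref{eq:localprod} is a \emph{consequence} of the proof of Proposition~\ref{propdur}, not an input to it; you list it as an ingredient without saying how to obtain it independently, and then immediately strengthen it to $\mu_{J^c_f}|_R\propto\Phi_*(\Delta\cdot(\mu^u_q\otimes\mu^{cs}_q))$ with a \emph{continuous positive} density $\Delta$. Nothing available at this stage produces such a $\Delta$, let alone a continuous one. Likewise, your promotion step invokes weak continuity of $b\mapsto(\mathrm{Hol}^{cs}_{q,b})_*\mu^u_q$ and implicitly of $b\mapsto\mu^u_b$, but Proposition~\ref{leafmeasure} only gives a \emph{measurable} system; continuity of $(\mu^u_x)_x$ is not established anywhere.

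The paper avoids all of this with a cleaner device: it works with the factor measure $\tilde\mu_q$ of \eqref{eq:tildemu}, defined by $\tilde\mu_q(E)=\mu_{J^c_f}\big(\bigcup_{y\in E}\mathcal W^{cs}_R(y)\big)$ for $E\subset\mathcal W^u_R(q)$. This object is $\mathrm{Hol}^{cs}$-invariant \emph{by definition} (it only sees $cs$-saturations), so no density or regularity argument is needed. One then starts from the Rokhlin disintegration \eqref{eq:CPZ} with $cs$-conditionals $\theta_y$, uses the $\mathrm{Hol}^u$-invariance of $(\theta_y)$ to swap the order of integration, and reads off that the resulting expression is a $u$-disintegration with conditionals $\tilde\mu_y$ and factor $\theta_q$. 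Uniqueness of conditionals together with (ii) gives $\tilde\mu_q=c_R\,\mu^u_q$, and since $\tilde\mu_q$ is $\mathrm{Hol}^{cs}$-invariant so is $\mu^u_q$. The local product structure \eqref{eq:localprod} then drops out for free. In short: replace your density-and-continuity argument by the factor-measure trick, and (iii) becomes a corollary rather than an assumption.
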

\subsection{The resonant state defines a system of center stable measures.}
We note that by Corollary \ref{cor} and Corollary \ref{lemmmuJ}, the system of $u$-measures $(\mu^u_x)_{x\in \mathbb T^3}$ coincides (up to a constant rescaling) with the conditionals $(\mu^u_{J^c_f,x})_{x\in \mathbb T^3}$ of $\mu_{J^c_f}$ along $\mathcal W^u$. We first start by expressing the $cs$-conditionals $(\mu^{cs}_{J^c_f,x})_{x\in \mathbb T^3}$ using the resonant state $\theta$. By \eqref{eq:res}, one has 
$\mathrm{WF}(\theta)\subset E_{cu}^*$ and for any $x\in \mathbb T^3$, $\mathrm{WF}([\mathcal W^{cs}(x)])\subset E_{cs}^*$, where $[\mathcal W^{cs}(x)]$ denotes the integration current on $\mathcal W^{cs}(x)$, see for instance \cite[Example 4.1.5]{Lef}.  Since $E_{cu}^*\cap E_{cs}^*=\emptyset$, this means that the distributional restriction $\theta_x$ where for any $g\in C^{\infty}(\mathbb T^3)$, $\theta_x(g)=(\theta, g [\mathcal W^{cs}(x)])_{\mathcal H^s\times \mathcal H^{-s}}$ is well defined \cite[Corollary 4.2.2]{Lef}. We first show that $(\theta_x)_{x\in \mathbb T^3}$ defines a system of $cs$-measures.
\begin{prop}
\label{propholu}
The system $(\theta_x)_{x\in \mathbb T^3}$ satisfies:
\begin{itemize}
\item for any $x\in \mathbb T^3$, $\theta_x$ is a non-negative measure on $\mathcal W^{cs}(x)$;
\item for any $x\in \mathbb T^3$ and $y\in \mathcal W^{cs}(x)$, one has $\theta_x=\theta_y$;
\item for any $x\in \mathbb T^3$, one has
\begin{equation}
\label{eq:scalingtheta}
f_*\theta_x=\lambda_u \theta_{f(x)};
\end{equation}
\item for any rectangle $R$ and $x,y\in R$, one has
\begin{equation}
\label{eq:holoinv}
(\mathrm{Hol}^{u}_{x,y})_*\theta_y=\theta_x.
\end{equation}
\end{itemize}
We say that $(\theta_x)_{x\in \mathbb T^3}$ is a \emph{center stable Margulis system of measures.}
\end{prop}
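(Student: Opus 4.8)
The plan is to establish the four bullet points of the proposition in an order that reflects their logical dependencies, with the holonomy invariance \eqref{eq:holoinv} being the real heart of the matter and deferred to the end.

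First I would record the basic well-definedness and the invariance along leaves. The restriction $\theta_x(g) = (\theta, g[\mathcal W^{cs}(x)])_{\mathcal H^s\times\mathcal H^{-s}}$ makes sense by the wavefront set transversality $E_{cu}^*\cap E_{cs}^* = \emptyset$ and \cite[Corollary 4.2.2]{Lef}, as already observed in the text. That $\theta_x = \theta_y$ whenever $y\in\mathcal W^{cs}(x)$ is immediate since $[\mathcal W^{cs}(x)] = [\mathcal W^{cs}(y)]$ as currents: both $x$ and $y$ determine the same center stable leaf, and the integration current depends only on the leaf (the local pieces glue because the distributional restriction only tests against functions supported in a small chart, or one argues via a covering of the leaf by local plaques, exactly as in the construction of the conditionals). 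Next I would prove the scaling relation \eqref{eq:scalingtheta}. Since $f_{*,2}\theta = \lambda_u\theta$ and $f$ maps $\mathcal W^{cs}(x)$ to $\mathcal W^{cs}(f(x))$, for any test function $g$ one computes $(f_*\theta_x)(g) = \theta_x(g\circ f) = (\theta, (g\circ f)[\mathcal W^{cs}(x)])$; pushing forward and using that $f_*[\mathcal W^{cs}(x)] = [\mathcal W^{cs}(f(x))]$ together with the eigenvalue equation $f_*\theta = \lambda_u\theta$ (and $f$ being orientation preserving, so no sign issue) gives $(f_*\theta_x)(g) = \lambda_u(\theta, g[\mathcal W^{cs}(f(x))]) = \lambda_u\theta_{f(x)}(g)$. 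This is essentially the argument of \cite[Lemma 3.2]{Hum}, adapted to the action on $2$-forms, and it is routine once one is careful about the identification of currents under pushforward.

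For non-negativity, I would relate $\theta_x$ to the $cs$-conditionals of $\mu_{J^c_f}$. By Corollary \ref{lemmmuJ}, $\mu_{J^c_f}$ is a $u$-MME, hence by Corollary \ref{cor} its $\mathcal W^u$-conditionals are (up to rescaling) the $(\mu^u_x)$. On the other hand, Lemma \ref{lemm2} identifies $\mu_{J^c_f}$ with $\nu\wedge\theta$, the product of the co-resonant and resonant states; unwinding the local product structure (the pairing $(\nu, g\theta)$ localized to a rectangle $R$ via \eqref{eq:CPZ}) shows that for $\mu_{J^c_f}$-a.e. $x$ the restriction $\theta_x$ is, up to a positive density coming from the transverse measure $\tilde\mu^u_q$, the $cs$-conditional $\mu^{cs}_{J^c_f,x}$, which is a genuine probability measure. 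Since the anisotropic-space pairing is continuous and $\mathcal W^{cs}(x)$ varies continuously, positivity propagates to every $x\in\mathbb T^3$; alternatively one tests $\theta_x$ against non-negative $g$ and uses that $\lambda_u^{-n}(f^*)^n$ of a positive smooth form converges to a multiple of $\theta$ (Lemma \ref{lemm0}, dual version) to see $\theta_x(g)\ge 0$ as a limit of manifestly non-negative quantities — this second route avoids the measure-theoretic bookkeeping entirely and is the one I would actually write.

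Finally, the holonomy invariance \eqref{eq:holoinv} — the technically substantial point — is where I expect the main obstacle. The strong unstable foliation $\mathcal W^u$ is absolutely continuous (the text flags \eqref{eq:abs} as the precise statement needed), so the $\mathrm{Hol}^u$ holonomy between center stable leaves is a bi-measurable map whose effect on Lebesgue-class measures is controlled by a Jacobian. The strategy is: approximate $\theta$ in $\mathcal H^s$ by $\lambda_u^{-n}(f^*)^n\omega$ for a smooth positive $2$-form $\omega$ (Lemma \ref{lemm0}), restrict to center stable leaves, and track how $\mathrm{Hol}^u$ transforms these smooth restrictions. Because $\mathrm{Hol}^u_{f^{-n}(x),f^{-n}(z)}$ has size shrinking like the unstable contraction rate of $f^{-n}$, the holonomy between $\mathcal W^{cs}(x)$ and $\mathcal W^{cs}(z)$ factors through deep backward iterates where the two leaves are exponentially close; combined with the conformality \eqref{eq:scalingtheta} which lets one pull the $\lambda_u^{-n}$ in and out, the distortion of the holonomy Jacobian along the orbit is summable, and in the limit $\mathrm{Hol}^u$ becomes measure preserving. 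Concretely, one writes $(\mathrm{Hol}^u_{x,z})_*\theta_z - \theta_x$, conjugates by $f^n$ to move to a scale where $\mathcal W^{cs}(f^{-n}x)$ and $\mathcal W^{cs}(f^{-n}z)$ are within $\delta_n\to 0$, uses the absolute continuity estimate \eqref{eq:abs} to bound the holonomy Jacobian by $1 + O(\delta_n^\alpha)$, and concludes by the uniform volume bounds \eqref{eq:tildemubound} (transported to the $cs$-setting via the product structure) that the difference tends to zero. This is the argument the introduction attributes to adapting \cite[Lemma 3.2]{Hum} together with the absolute continuity of $\mathcal W^u$, and the delicate part is making the conjugation-by-$f^n$ and the limit interchange rigorous in the weak topology on leaf measures — in particular ensuring the test functions on $\mathcal W^{cs}$ transported by $\mathrm{Hol}^u$ and by $f^{\pm n}$ stay in a fixed compact family so that the $\mathcal H^s$-convergence of Lemma \ref{lemm0} can be applied uniformly.
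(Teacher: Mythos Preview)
Your outline matches the paper's proof in all essentials: the compatibility and scaling are handled exactly as you describe, non-negativity comes from writing $\theta_x$ as a limit of iterates of a smooth positive form via Lemma~\ref{lemm0}, and the holonomy invariance is obtained by combining this approximation with the absolute continuity estimate \eqref{eq:abs} for $\mathcal W^u$ and the exponential shrinking of $\mathrm{Hol}^u_{f^{-n}x,f^{-n}y}$. Two points to fix. First, the approximation uses the \emph{pushforward}: it is $\lambda_u^{-n}(f_*)^n\omega\to c_\omega\theta$ in $\mathcal H^s$ by \eqref{eq:conv}; your $(f^*)^n$ would converge to the co-resonant state $\nu$, not $\theta$. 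The paper takes $\omega$ to be a smooth $2$-form $C^0$-close to $d\mathrm{Vol}_{\mathcal W^{cs}}$, which yields the concrete formula \eqref{eq:thetax} and makes both non-negativity and the holonomy computation explicit --- no abstract interchange of limits is needed, one bounds the stage-$n$ difference directly by $C(\delta e^{-\nu n})^\beta\lambda_u^n$ and the prefactor $\lambda_u^{-n}$ in \eqref{eq:thetax} cancels the $\lambda_u^n$. Second, your Route~A for non-negativity (identifying $\theta_x$ with the $cs$-conditionals of $\mu_{J^c_f}$) is circular: that identification is Proposition~\ref{prophol}, whose proof relies on the very holonomy invariance \eqref{eq:holoinv} you are establishing here. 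You were right to prefer Route~B.
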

\begin{proof}
Let $\omega \in \Omega_2$ be a smooth $2$-form which is $C^0$-close to $d\mathrm{Vol}_{\mathcal W^{cs}}$. This means that there is a function $a\in C^{0}(\mathbb T^3)$, $C^0$-close to $1$ such that for any $g \in C^{\infty}(\mathbb T^3)$, 
$$\forall n\in \mathbb N, \quad ( f_*^n \omega, g[\mathcal W^{cs}(x)])_{\Omega_2\times \mathcal D'_1}=\int_{\mathcal W^{cs}(x)}g(y)a(f^{-n}y) d\big((f_*)^n\mathrm{vol}_{\mathcal W^{cs}(x)}\big)(y). $$
Since $\mathrm{WF}(g[\mathcal W^{cs}(x)])\subset E_{cs}^*\subset E_s^*$, one has  $g[\mathcal W^{cs}(x)]\in \mathcal H^{-s}$ for some $s>0$, see \cite[Lemma 1.11]{GBGW}. Lemma \ref{lemm0} then gives, for some $c>0$,
\begin{equation}
\label{eq:thetax}
\begin{split}\theta_x(g)&=c\lim_{n\to+\infty}\lambda_u^{-n}( f_*^n \omega, g[\mathcal W^{cs}(x)])_{\mathcal H^s \times \mathcal H^{-s}}
\\&=c\lim_{n\to+\infty}\lambda_u^{-n}\int_{\mathcal W^{cs}(x)}g(y)a(f^{-n}y) d\big((f_*)^n\mathrm{vol}_{\mathcal W^{cs}(x)}\big)(y). 
\end{split}
\end{equation}
Since $a$ is $C^0$-close to $1$, one has $a>0$ and this shows that if $g\geq 0$, then $\theta_x(g)\geq 0$. In other words, $\theta_x$ is a non-negative measure on $\mathcal W^{cs}(x)$.
The compatibility condition is immediate from the definition of $\theta_x$. Let us show \eqref{eq:scalingtheta}. For this, we fix $g\in C^{\infty}(\mathbb T^3)$. We  use the relation $f^*(g[\mathcal W^{cs}(f(x))])=(g\circ f)[\mathcal W^{cs}(x)]$ to compute
\begin{align*}(f_*\theta_x)(g)&=\theta_x(g\circ f)=(\theta, (g\circ f)[\mathcal W^{cs}(x)])_{\mathcal H^s\times \mathcal H^{-s}}= (\theta, f^*(g[\mathcal W^{cs}(f(x))]))_{\mathcal H^s\times \mathcal H^{-s}}
\\&=(f_*\theta, g[\mathcal W^{cs}(f(x))])_{\mathcal H^s\times \mathcal H^{-s}}=\lambda_u(\theta, g[\mathcal W^{cs}(f(x))])_{\mathcal H^s\times \mathcal H^{-s}}=\lambda_u \theta_{f(x)}(g),
\end{align*}
where we used \eqref{eq:res} for the resonance $\lambda_u$. We now show the invariance by unstable holonomy. The argument is given in \cite[Section 4.1]{BFT} but we include the proof here for completeness. Note that since $f\in \mathcal A_+^{\infty}(\mathbb T^3)$, $f^{-1}$ is a partially hyperbolic diffeomorphism with dominated splitting $T(\mathbb T^3)=E_u\oplus E_{cs}$ with $E_u$ being contracting for $f^{-1}$. We can thus use the \emph{absolute continuity} of $E_u$, see \cite[Theorem 4.3]{BFT}. Namely, there are constants $K,\beta>0$ such that for any $\epsilon>0$, for any rectangle $R$, for any $x_1,x_2\in R$ and for any two $\epsilon$-equivalent Borel sets  $A_i\subset \mathcal W^{cs}(x_i)$, $i=1,2$ (see \S \ref{sec:delta}), the measures $(\mathrm{Hol}^{u}_{x_2,x_1})_*\mathrm{vol}_{A_1}$ and $\mathrm{vol}_{A_2}$ are equivalent and
\begin{equation}
\label{eq:abs}
\left|\frac{d(\mathrm{Hol}^{u}_{x_2,x_1})_*\mathrm{vol}_{A_1}}{d\mathrm{vol}_{A_2}}-1\right|\leq K^{1+\epsilon}\epsilon^\beta.
\end{equation}
Let $R$ be a rectangle and $x,y\in R$. Let $\phi\in C^\infty(\mathcal W^{cs}(x))$ such that $\phi$ has compact support in $\mathcal W^{cs}(y)$ and let $\psi:=\phi \circ  \mathrm{Hol}^{u}_{x,y}$. Suppose that $x,y$ are close enough so that $\psi$ and $\phi$ are $\delta$-equivalent for some small $\delta>0$. By the Anosov property, there is a $\nu>0$ such that for $n\in \mathbb N$,  $\phi \circ f^n$ and $\psi  \circ f^n$ are $(e^{-\nu n}\delta)$-equivalent via $\mathrm{Hol}^{u}_{f^{-n}x,f^{-n} y}=f^{-n}\circ \mathrm{Hol}^{u}_{x,y}\circ f^n$. In the following computation, we will write $\mathrm{vol}_x$ to denote $\mathrm{vol}_{\mathcal W^{cs}(x)}$. Using \eqref{eq:abs} and \eqref{eq:lowerupper},
\begin{align*}
&\left| \int_{\mathcal W^{cs}(x)}\psi(z)a(f^{-n}z) d((f_*)^n\mathrm{vol}_{x})(z)-\int_{\mathcal W^{cs}(y)}\phi(z) a(f^{-n}z)d((f_*)^n\mathrm{vol}_{y})(z)\right|
\\&= \left| \int_{\mathcal W^{cs}(f^{-n}x)}\psi(f^nz)a(z) d\mathrm{vol}_{f^{-n}x}(z)-\int_{\mathcal W^{cs}(f^{-n}y)}\phi(f^nz)a(z) d\mathrm{vol}_{f^{-n}y}(z)\right|
\\&=\left|\int_{\mathcal W^{cs}(f^{-n}y)}\left(\phi\circ f^n \frac{d(\mathrm{Hol}^{u}_{f^{-n}y,f^{-n}x})_*\mathrm{vol}_{f^{-n}x}}{d\mathrm{vol}_{f^{-n}y}}-\phi\circ f^n\right)(z)a(z) d\mathrm{vol}_{f^{-n}y}(z)\right|
\\&=\left|\int_{\mathcal W^{cs}(f^{-n}y)}\phi\circ f^n \left(\frac{d(\mathrm{Hol}^{u}_{f^{-n}y,f^{-n}x})_*\mathrm{vol}_{f^{-n}x}}{d\mathrm{vol}_{f^{-n}y}}-1\right)(z)a(z) d\mathrm{vol}_{f^{-n}y}(z)\right|
\\&\leq K^{1+\delta e^{-\nu n}}(\delta e^{-\nu n})^{\beta}\int_{\mathcal W^{cs}(f^{-n}y)}|\phi\circ f^n(z)|a(z)d\mathrm{vol}_{f^{-n}y}(z)
\\& \leq K'\times K^{1+\delta e^{-\nu n}}(\delta e^{-\nu n})^{\beta}\lambda_u^n.
\end{align*}
Using \eqref{eq:thetax}, this shows that 
$$|(\mathrm{Hol}^{u}_{x,y})_*\theta_x(\phi)-\theta_y(\phi)|\leq \liminf_{n\to+\infty}cK'\times K^{1+\delta e^{-\nu n}}(\delta e^{-\nu n})^{\beta}=0, $$
which shows the holonomy invariance.
\end{proof}
Now, we show that $(\theta_x)_{x\in \mathbb T^3}$ identifies with the center stable conditionals $(\mu^{cs}_{J^c_f,x})_{x\in \mathbb T^3}$.
\begin{prop}
\label{prophol}
For any $x\in \mathbb T^3$,there is a $c_x>0$ such that for any $g\in C^{\infty}(\mathbb T^3)$, 
\begin{equation}
\label{eq:oh}
\lim_{n\to+\infty}((f^*)^n\theta_x)(g)=c_x\mu_{J^c_f}(g).
\end{equation}
 Moreover, for any rectangle $R$ such that $\mu_{J^c_f}(R)>0$, the conditional measures $(\mu^{cs}_{J^c_f,y})_{y\in R}$ are given $\mu_{J^c_f}$-a.e. by  $(\theta_y)_{y\in R}$, up to a constant rescaling. For $\mu_{J^c_f}$-a.e. $y\in R$, one has
\begin{equation}
\label{eq:cond}\forall z\in \mathcal W^{cs}_R(y),\quad 
(\mu^{cs}_{J^c_f,y})(z)=\frac{\theta_y(z)}{\theta_y(\mathcal W^{cs}_R(y))}.
\end{equation}

\end{prop}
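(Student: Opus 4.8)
The proposition has two halves: the convergence \eqref{eq:oh}, which I would extract from the functional (spectral) picture of \S\ref{sec4}, and the identification \eqref{eq:cond} of the $cs$-conditionals of $\mu_{J^c_f}$, which I would then deduce from \eqref{eq:oh} by a dynamical argument modelled on \cite[Lemma 8.1]{CPZ20}.

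For \eqref{eq:oh}, the mechanism is that $\theta_x$ is the restriction of the resonant state $\theta$ to $\mathcal W^{cs}(x)$, and that under pull-back by $f^n$, normalised by $\lambda_u^{-n}$, the leaf current $[\mathcal W^{cs}(x)]$ is attracted to the co-resonant state $\nu$. Working in a fixed chart (or rectangle $R$) and smoothing the plaque boundary as in the proof of Proposition \ref{propholu}, the current $[\mathcal W^{cs}(x)]$ lies in $\mathcal H^{-s}$ for $s$ large, since $\mathrm{WF}([\mathcal W^{cs}(x)])\subset E_{cs}^*\subset E_s^*$ (\cite[Lemma 1.11]{GBGW}). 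Using the relation $f^*\bigl(g\,[\mathcal W^{cs}(f(\cdot))]\bigr)=(g\circ f)\,[\mathcal W^{cs}(\cdot)]$ together with $(f^*)^n\theta=\lambda_u^{-n}\theta$ (equivalent to the resonance equation $f_*\theta=\lambda_u\theta$), one computes for $g\in C^\infty(\mathbb T^3)$
\[
\bigl((f^*)^n\theta_x\bigr)(g)=\bigl(\theta,\ g\cdot\bigl(\lambda_u^{-n}(f^*)^n[\mathcal W^{cs}(x)]\bigr)\bigr)_{\mathcal H^s\times\mathcal H^{-s}}.
\]
By Lemma \ref{lemm0}, eq.~\eqref{eq:conv2}, $\lambda_u^{-n}(f^*)^n[\mathcal W^{cs}(x)]\to c_x\,\nu$ in $\mathcal H^{-s}$ for some $c_x\in\mathbb R$; since multiplication by $g\in C^\infty$ and pairing with $\theta\in\mathcal H^s$ are continuous on $\mathcal H^{-s}$, and since $\nu\wedge\theta=\mu_{J^c_f}$ by Lemma \ref{lemm2}, we obtain $\bigl((f^*)^n\theta_x\bigr)(g)\to c_x\,\mu_{J^c_f}(g)$. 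It remains to check $c_x>0$: the measures $(f^*)^n\theta_x$ are non-negative and, by bounds of the type \eqref{eq:lowerupper}, have nonvanishing mass on fixed balls, so the limit is not the zero measure; equivalently, the full support of $\theta_x$ forces $c_x\neq 0$.

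For \eqref{eq:cond} I would follow \cite[Lemma 8.1]{CPZ20}. Although $\mathcal W^{cs}$ is not uniformly contracted, the complementary bundle $E_u$ is uniformly contracted by $f^{-1}$, so for any rectangle $R$ and $\varepsilon>0$, for $n$ large $f^{-n}(R)$ lies in an $\varepsilon$-thin $u$-neighbourhood of a single $cs$-plaque. Fix $q\in R$ and disintegrate $\mu_{J^c_f}|_R$ over the $cs$-plaques as in \eqref{eq:CPZ}. For $\tilde\mu^u_q$-a.e.\ $y$, Lebesgue differentiation of the transverse measure gives $\int\phi\,d\mu^{cs}_{J^c_f,y}=\lim_{A\downarrow\{y\}}\mu_{J^c_f}(R_A)^{-1}\int_{R_A}\phi\,d\mu_{J^c_f}$, where $R_A=\bigcup_{z\in A}\mathcal W^{cs}_R(z)$. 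For each fixed $A$, transporting $R_A$ and its test functions to the thin slab $f^{-n}(R)$ by the $f$-invariance of $\mu_{J^c_f}$, comparing $\mu_{J^c_f}$ on that slab with a product of a $cs$-leaf measure and a $u$-transverse measure — the holonomy invariance \eqref{eq:holoinv} identifies the $cs$-measures on all plaques and the $u$-transverse factor cancels in ratios — and using the scaling \eqref{eq:scalingtheta} together with the convergence \eqref{eq:oh} (which supplies the transverse measure against which the slabs equidistribute), one arrives at $\mu_{J^c_f}(R_A)^{-1}\int_{R_A}\phi\,d\mu_{J^c_f}=\lim_n(\text{ratio of }\theta\text{-integrals over thin slabs})$, whose value, in the limit $A\downarrow\{y\}$, is $\theta_y(\mathcal W^{cs}_R(y))^{-1}\int_{\mathcal W^{cs}_R(y)}\phi\,d\theta_y$. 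This is \eqref{eq:cond}; the constant $c_x$ of \eqref{eq:oh} is then read off by testing \eqref{eq:oh} against an approximation of $\mathbf 1_R$.

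The functional half \eqref{eq:oh} is short once Lemma \ref{lemm0} is available; the substantive work is \eqref{eq:cond}. The main obstacle is precisely that, in contrast with \cite{CPZ20}, $\mathcal W^{cs}$ is not uniformly contracted, so one cannot collapse it onto a plaque by iterating the dynamics; one must instead collapse the complementary direction $E_u$ and control, \emph{uniformly in the leaf}, the resulting comparison between $\mu_{J^c_f}$ on a thin $u$-slab and a product of a $cs$-leaf measure with a $u$-transverse measure — this is where the holonomy invariance \eqref{eq:holoinv} of $(\theta_y)$ and the uniform volume bounds \eqref{eq:lowerupper} are essential. A secondary point that needs genuine care, rather than being automatic, is the positivity $c_x>0$ in \eqref{eq:oh}, which must be extracted from the full support of $\theta_x$.
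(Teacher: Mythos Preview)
Your argument for \eqref{eq:oh} is essentially the paper's: both pair against $[\mathcal W^{cs}(x)]\in\mathcal H^{-s}$, apply Lemma~\ref{lemm0} to get $\lambda_u^{-n}(f^*)^n[\mathcal W^{cs}(x)]\to c_x\nu$, and invoke Lemma~\ref{lemm2} for $\nu\wedge\theta=\mu_{J^c_f}$. Your attention to $c_x>0$ is a welcome addition; the paper does not address it explicitly in the proof.

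For \eqref{eq:cond} your sketch diverges from the paper and carries a potential circularity. You propose to transport $R_A$ to the thin slab $f^{-n}(R)$ via the $f$-invariance of $\mu_{J^c_f}$ and then compare $\mu_{J^c_f}$ on that slab with a product of a $cs$-leaf measure and a $u$-transverse measure; but knowing that $\mu_{J^c_f}$ has such a product structure, with $cs$-factor given by $\theta$, is precisely what one is trying to prove, and the thinness of $f^{-n}(R)$ in the $u$-direction does not by itself supply it. The paper avoids this by \emph{not} transporting and \emph{not} appealing to any product structure of $\mu_{J^c_f}$. It stays inside a sub-rectangle $\xi_m(y)\subset R$ (from a refining sequence as in Lemma~\ref{lemmCPZ}) and uses only the approximation $\mu_{J^c_f}\approx c_x^{-1}(f^*)^n\theta_x$ just established in \eqref{eq:oh}. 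The key observation is that $(f^*)^n\theta_x$ is itself supported on a \emph{single} $cs$-leaf, namely $\mathcal W^{cs}(f^{-n}x)$; its restriction to $\xi_m(y)$ is therefore a finite sum over $s(m)$ local plaques $\mathcal W^{cs}_R(z_{n,m}^{(i)})$, and by the $u$-holonomy invariance \eqref{eq:holoinv} together with the scaling \eqref{eq:scalingtheta} each plaque contributes exactly $\lambda_u^{-n}\int_{\mathcal W^{cs}_R(y)}\psi\,d\theta_y$. Hence
\[
\int_{\xi_m(y)}\psi\,d\bigl((f^*)^n\theta_x\bigr)=\lambda_u^{-n}\,s(m)\int_{\mathcal W^{cs}_R(y)}\psi\,d\theta_y,
\]
and letting $n\to\infty$ (first with $\psi\equiv 1$ to identify $\lambda_u^{-n}s(m)$ via \eqref{eq:oh}) yields \eqref{eq:cond} through Lemma~\ref{lemmCPZ}. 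In short, the leaf structure you need is already present in the approximant $(f^*)^n\theta_x$, not in $\mu_{J^c_f}$; there is no need to pass to $f^{-n}(R)$ or to collapse the $u$-direction geometrically.
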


\begin{proof}
Let $g\in C^{\infty}(\mathbb T^3)$ and let $x\in \mathbb T^3$. We denote by $\Pi_0^*:\mathcal H^{-s}\to \mathcal H^{-s}$ the spectral projector on $\mathrm{Span}(\nu)$ where $\nu$ is a co-resonant state at $\lambda_u$.  Using Lemma \ref{lemm0}, we get $c_x\in  \mathbb R$ such that $\lambda_u^{-n}(f^*)^n[\mathcal W^{cs}(x)]\to c_x\nu$ in $\mathcal H^{-s}$ and Lemma \ref{lemm2} gives
\begin{align*}c_x\mu_{J^c_f}(g)&=c_x( g\theta,\nu)_{\mathcal H^s\times \mathcal H^{-s}}=( g\theta ,\Pi_0^*[\mathcal W^{cs}(x)])_{\mathcal H^s\times \mathcal H^{-s}} 
\\&=\lim_{n\to+\infty}\lambda_u^{-n} ( g\theta , (f^*)^n[\mathcal W^{cs}(x)])_{\mathcal H^s\times \mathcal H^{-s}}
=\lim_{n\to+\infty}\lambda_u^{-n} ((f_*)^n( g\theta) , [\mathcal W^{cs}(x)])_{\mathcal H^s\times \mathcal H^{-s}}
\\&=\lim_{n\to+\infty} \big(\big((f_*)^n g\big)\theta , [\mathcal W^{cs}(x)]\big)_{\mathcal H^s\times \mathcal H^{-s}}=\lim_{n\to+\infty}\big((f^*)^n\theta_x\big)(g),
\end{align*}
where we used \eqref{eq:res} for $\lambda=\lambda_u$ and $u=\theta$.
The rest of the argument follows the proof of \cite[Lemma 8.1]{CPZ20}. We will need the following characterization of the conditional measures, see  \cite[Proposition 8.2]{CPZ20}.
\begin{lemm}
\label{lemmCPZ}
Let $\mu$ be a probability measure and let $R\subset \mathbb T^3$ be a rectangle such that $\mu(R)>0$. Let $\{\xi_n\}_{n\in \mathbb N}$ be a sequence of refining finite partitions converging to the partition $\xi$ into center-stable manifolds $(\mathcal W^{cs}_R(x))_{x\in R}$. Then there is a $R'\subset R$ such that $\mu(R)=\mu(R')$ and such that for every $y\in R'$ and every continuous $\psi:R\to \mathbb R$,
\begin{equation}
\label{eq:condi}
\int_{\mathcal W^{cs}_R(y)}\psi(z)d\mu^{cs}_y(z)=\lim_{n\to+\infty}\frac{1}{\mu(\xi_n(y))}\int_{\xi_n(y)}\psi(z)d\mu(z),
\end{equation}
where $\xi_n(y)$ is the element of $\xi_n$ containing $y$.
\end{lemm}
Let $x\in \mathbb T^3$, $n\geq 0$ and $R\subset \mathbb T^3$ be a rectangle so that $\mu_{J^c_f}(R)>0$. Let $\{\xi_n\}_{n\in \mathbb N}$ be a sequence of refining finite partitions so that for any $y\in R$, $\xi_n(y)$ is a rectangle and $\cap_{n\in \mathbb N}\xi_n(y)=\mathcal W^{cs}_R(y)$. Note that $(f^*)^n\theta_x$ is supported on $\mathcal W^{cs}(f^{-n}x)$. Fix $y\in R'$ given by Lemma \ref{lemmCPZ} and let $n,m\in \mathbb N$. Then there exists points $z^{(1)}_{n,m},\ldots, z^{(s(m))}_{n,m}\in \mathcal W^{cs}(f^{-n}x)\cap \xi_m(y)$ such that
$\mathcal W^{cs}(f^{-n}x)\cap \xi_m= \cup_{i=1}^{s(m)}\mathcal W^{cs}_R(z^{(i)}_{n,m}), $
where the union is disjoint\footnote{Since our system $(\theta_x)_{x\in \mathbb T^3}$ is defined on the whole center stable manifold $\mathcal W^{cs}(x)$ rather than the local one, any $\mathcal W^{cs}_R(z^{(i)}_{n,m})$ is fully included in $ \mathcal W^{cs}(f^{-n}x)$ in contrast to \cite[Proposition 8.2]{CPZ20}.} . We want to write $((f^*)^n\theta_x)_{|\xi_m}$ as a linear combination of measures supported on these sets. Using \eqref{eq:scalingtheta} gives, for any continuous $\psi: \xi_m(y)\to \mathbb R$, 
\begin{equation}
\begin{split}
\label{eq:11}
\int_{\xi_m(y)}\psi (z)d((f^*)^n\theta_x)(z)&=\sum_{i=1}^{s(m)}\int_{f^n\mathcal W^{cs}_R(z_{n,m}^{(i)})}\psi(f^{-n}z) d\theta_{x}(z)
\\&=\lambda_u^{-n}\sum_{i=1}^{s(m)}\int_{\mathcal W^{cs}_R(z_{n,m}^{(i)})}\psi(z) d\theta_{z_{n,m}^{(i)}}(z).
\end{split}
\end{equation}
For any $i=1,\ldots, s(m)$, using the $u$-holonomy invariance \eqref{eq:holoinv}, we obtain
\begin{align*}
 \int_{\mathcal W^{cs}_R(z_{n,m}^{(i)})}\psi(z) d\theta_{z_{n,m}^{(i)}}(z)=  \int_{\mathcal W^{cs}_R(y)}\psi(z)d\theta_{y}(z).
\end{align*}
Plugging this back into \eqref{eq:11}, this gives 
\begin{align*} \int_{\xi_m(y)}\psi(z) d((f^*)^n\theta_x)(z) =  \lambda_u^{-n}s(m) \int_{\mathcal W^{cs}_R(y)}\psi(z) d\theta_y(z).
\end{align*}
Choosing $\psi\equiv 1$, this gives using \eqref{eq:oh}
$$\lambda_u^{-n}s(m)=\frac{[(f^*)^n\theta_x](\xi_m(y))}{\theta_y(\mathcal W^{cs}_R(y))}\to_{n\to+\infty} \frac{c_x\mu_{J^c_f}(\xi_m(f))}{\theta_y(\mathcal W^{cs}_R(y))} .$$
Combining the last two equations finally yields
\begin{equation*}
 \frac{1}{\mu_{J^c_f}(\xi_m(y))}\int_{\xi_m(y)}\psi d\mu_{J^c_f} \to_{n\to+\infty}\frac{1}{\theta_y(\mathcal W^{cs}_R(y))}\int_{\mathcal W^{cs}_R(y)}\psi d\theta_y,
\end{equation*}
which, combined with \eqref{eq:condi}, gives \eqref{eq:cond}.
\end{proof}
\subsection{Holonomy invariance of $(\mu^u_x)_{x\in \mathbb T^3}$.}
In this subsection, we show Proposition \ref{propdur}.

\begin{proof}
Let $R$ be a rectangle such that $\mu_{J^c_f}(R)>0$. By Proposition \ref{prophol}, the $cs$-conditionals  $(\mu^{cs}_{J^c_f,x})_{x\in \mathbb T^3}$ are given, up to a constant rescalling by $(\theta_x)_{x\in \mathbb T^3}$ and are $\mathrm{Hol}^u$-invariant. Let $\theta$ be chosen so that $(\theta_x)_{x\in \mathbb T^3}=(\mu^{cs}_{J^c_f,x})_{x\in \mathbb T^3}$. For $q\in R$, recall that we defined a measure $\tilde \mu_q$ on $\mathcal W^u_R(q)$ by \eqref{eq:tildemu}.
The system of measures $(\tilde \mu_q)_{q\in R}$ is easily seen to be $\mathrm{Hol}^{cs}$-invariant
\begin{equation}
\label{eq:111}
\forall q\in R,\ \forall p\in \mathcal W^{cs}_R(q), \quad (\mathrm{Hol}^{cs}_{p,q})_*\tilde \mu_q=\tilde \mu_p.
\end{equation}
Using \eqref{eq:CPZ}, for any $\phi\in C_c^{\infty}(R,\mathbb R)$, one has
$$\mu_{J^c_f}(\phi)=\int_{\mathcal W_R^{u}(q)}\int_{\mathcal W_R^{cs}(y)}\phi(z)d\theta_y(z)d\tilde \mu^{u}_q(y). $$
We now use \eqref{eq:holoinv} and \eqref{eq:111} to obtain
\begin{align*}\mu_{J^c_f}(\phi)&=\int_{\mathcal W^u_R(q)}\int_{\mathcal W^{cs}_R(y)}\phi(z)d\theta_y(z)d\tilde \mu^{u}_q(y)=\int_{\mathcal W^u_R(q)}\int_{\mathcal W_R^{cs}(q)}\phi([z,y])d\theta_q(z)d\tilde \mu^{u}_q(y)
\\&=\int_{\mathcal W_R^{cs}(q)}\int_{\mathcal W _R^{u}(y)}\phi(z)d\tilde \mu^{u}_y(z)d\theta_q(y). 
\end{align*}
From \eqref{eq:conditional} and the last equation, we see that $\tilde \mu_q=(\mu^u_{J^c_f})_q=c_R \mu^u_q$ for some constant $c_R$ that might depend on the rectangle $R$ but not on $q\in R$. Then \eqref{eq:111} shows that $(\mu^u_q)_{q\in \mathbb T^3}$ is invariant by center stable holonomy and this concludes the proof.
\end{proof}
Note that we showed that $\mu_{J^c_f}$ has a center-stable/strong unstable local product structure. For any rectangle $R$ and $q\in R$, there is $c_R>0$ such that
\begin{equation}
\label{eq:localprod}
\forall \phi\in L^1(R,\mu_{J^c_f}),\quad \mu_{J^c_f}(\phi)=c_R\int_{\mathcal W^u_R(q)}\int_{\mathcal W^{cs}_R(y)}\phi(z)d\mu^{cs}_y(z)d \mu^{u}_q(y). 
\end{equation}
\subsection{The $u$-Margulis system as a co-resonant state.}
In this subsection, we show that the system of Margulis measures constructed in Theorem \ref{leafmeasure} coincides with the co-resonant state associated to the first resonance $\lambda_u$. Let $\varphi \in \Omega_2$ supported in a small rectangle $R$. We define a current by
\begin{equation}
\label{eq:current}
\forall q\in R, \quad \mu_q^u(\varphi):=\int_{\mathcal W_R^u(q)}\Big(\int_{\mathcal W^{cs}_R(y)}\varphi\Big)d\mu^u_q(y).
\end{equation}
\begin{lemm}
\label{lemmWF}
The current $\mu^u_q$ is independent of $q\in R$ and thus defines a current on $\mathbb T^3$ which we will denote by $\mu^u$. Moreover, one has
\begin{equation}
\label{eq:WF}
f^*\mu^u=\lambda_u \mu^u, \quad \mathrm{WF}(\mu^u)\subset E_{cs}^*\subset E_s^*.
\end{equation}
In other words, $\mu^u$ is a co-resonant state associated to the first resonance $\lambda_u$.
\end{lemm}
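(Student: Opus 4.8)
The plan is to establish the four assertions of Lemma \ref{lemmWF} in turn — base‑point independence of $\mu^u_q$, gluing into a global current $\mu^u$, the scaling relation, and the wavefront bound — the last being the crux. It is convenient to rewrite \eqref{eq:current} as $\mu^u_q=\int_{\mathcal W^u_R(q)}[\mathcal W^{cs}_R(y)]\,d\mu^u_q(y)$, where $[\mathcal N]$ denotes the integration current over a submanifold $\mathcal N$. For $q'\in R$ the center stable holonomy $\mathrm{Hol}^{cs}_{q,q'}\colon\mathcal W^u_R(q)\to\mathcal W^u_R(q')$ sends each $y$ to the point $[y,q']$ of the \emph{same} center stable plaque, so $\mathcal W^{cs}_R(\mathrm{Hol}^{cs}_{q,q'}y)=\mathcal W^{cs}_R(y)$; combined with $(\mathrm{Hol}^{cs}_{q,q'})_*\mu^u_q=\mu^u_{q'}$ (Proposition \ref{propdur}) and a change of variables this yields $\mu^u_q(\varphi)=\mu^u_{q'}(\varphi)$. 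Since for $R$ large relative to $\supp\varphi$ the plaque integrals $\int_{\mathcal W^{cs}_R(y)}\varphi$ agree with the intrinsic leaf integrals $\int_{\mathcal W^{cs}(y)}\varphi$, the functional $\varphi\mapsto\mu^u_q(\varphi)$ depends only on the globally defined data $\mathcal W^{cs},\mathcal W^u,(\mu^u_x)_{x\in\mathbb T^3}$; a partition of unity over a cover of $\mathbb T^3$ by small rectangles then produces a well‑defined current $\mu^u$, which is nonzero because each $\mu^u_q$ has full support (Proposition \ref{leafmeasure}).

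For the scaling relation, use the convention $(f^*T)(\varphi)=T(f_*\varphi)$. Since $f$ preserves orientation and maps rectangles to rectangles, $f^*[\mathcal W^{cs}_R(y)]=[\mathcal W^{cs}_{f^{-1}R}(f^{-1}y)]$, while \eqref{eq:Marg} together with $e^{h^u_{\mathrm{top}}(f)}=\lambda_u$ (Corollary \ref{corr12}) gives $\mu^u_q=\lambda_u\,f_*\mu^u_{f^{-1}(q)}$. Substituting both into $\mu^u(f_*\varphi)=\int_{\mathcal W^u_R(q)}\langle f^*[\mathcal W^{cs}_R(y)],\varphi\rangle\,d\mu^u_q(y)$ and changing variables $y=f(y')$ exhibits the right‑hand side as $\lambda_u$ times the expression defining $\mu^u(\varphi)$ in the rectangle $f^{-1}R$ based at $f^{-1}(q)$, which by the base‑point independence just proved equals $\lambda_u\mu^u(\varphi)$; hence $f^*\mu^u=\lambda_u\mu^u$.

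The wavefront bound $\mathrm{WF}(\mu^u)\subset E_{cs}^*$ is the main obstacle, precisely because the center stable foliation is only Hölder continuous: it cannot be straightened by a smooth chart, so the tensor‑product computation available for a smooth foliation is not at our disposal. Instead we superpose oscillatory‑integral estimates, adapting \cite[Lemma 3.2]{Hum}. Each $\mathcal W^{cs}_R(y)$ is a $C^\infty$ submanifold with $\mathrm{WF}([\mathcal W^{cs}_R(y)])=N^*\mathcal W^{cs}_R(y)=E_{cs}^*|_{\mathcal W^{cs}_R(y)}$ (cf. \cite[Example 4.1.5]{Lef}). Fix $p\in\mathbb T^3$, a conic neighbourhood $\Gamma$ of $E_{cs}^*(p)$, a cutoff $\chi\in C_c^\infty$ supported near $p$ in a chart, and — using continuity of the bundle $E_{cs}$ and the uniform $C^\infty$ bounds on the leaves $\mathcal W^{cs}_R(y)$, $y\in\overline R$ — a rectangle $R\ni p$ so small that all conormal directions $N^*\mathcal W^{cs}_R(y)$ over $\supp\chi$ lie in $\Gamma$. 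Non‑stationary phase then gives, for every $N$, a constant $C_N$ with $|\widehat{\chi[\mathcal W^{cs}_R(y)]}(\xi)|\le C_N\langle\xi\rangle^{-N}$ for all $\xi\notin\Gamma$ and all $y\in\overline R$, uniformly (the constants depend only on the uniform $C^k$‑norms of the leaves and $\chi$ and on a uniform lower bound for the conormal gap, all controlled by compactness). Integrating against $\mu^u_q$, which is finite on the precompact set $\mathcal W^u_R(q)$ by \eqref{eq:tildemubound}, preserves the estimate, so $|\widehat{\chi\mu^u}(\xi)|\le C_N'\langle\xi\rangle^{-N}$ for $\xi\notin\Gamma$; letting $R$, hence $\Gamma$, shrink yields $\mathrm{WF}(\mu^u)\cap T_p^*\mathbb T^3\subset E_{cs}^*(p)$. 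Since $E_s\subset E_{cs}$ we have $E_{cs}^*\subset E_s^*$, so $\mathrm{WF}(\mu^u)\subset E_s^*$, and together with $f^*\mu^u=\lambda_u\mu^u$ and $\mu^u\neq 0$ this is exactly the assertion that $\mu^u$ is a co‑resonant state for the first resonance $\lambda_u$, by \eqref{eq:cores}.
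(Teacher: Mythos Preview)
Your proof is correct and follows essentially the same route as the paper's own argument: base-point independence via the $\mathrm{Hol}^{cs}$-invariance of Proposition \ref{propdur}, globalization by a partition of unity, the scaling relation from \eqref{eq:Marg}, and the wavefront bound by non-stationary phase on each center stable plaque followed by integration in $\mu^u_q$ (the paper phrases this last step as ``integrating by parts in $y$'' in $\mu^u(e^{iS/h}\chi)$ and likewise cites \cite[Lemma 3.2]{Hum}). Your write-up is slightly more explicit about the uniformity of the oscillatory estimates in the leaf parameter $y$, which is exactly the point one needs to justify exchanging the decay with the $\mu^u_q$-integral.
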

\begin{proof}
Let $p,q\in R$. If $p\in \mathcal W^u(q)$, then $\mu^u_p=\mu^u_q$ which implies that $\mu^u_p(\varphi)=\mu^u_q(\varphi)$. If $q\in \mathcal W^{cs}(p)$, then using the holonomy invariance \eqref{eq:abso}, we have
$$ \mu_q^u(\varphi)=\int_{\mathcal W^{u}_R(q)}\Big(\int_{\mathcal W^{cs}_R(y)}\varphi\Big)d\mu^u_q(y)=\int_{\mathcal W_R^u(p)}\Big(\int_{\mathcal W^{cs}_R(z)}\varphi\Big)d\big((\mathrm{Hol}^{cs}_{p,q})_*\mu^u_q\big)(z)=\mu_p^u(\varphi).$$
Using the local product structure, we deduce that $\mu^u_q$ does not depend on the point $q\in R$. Using a partition of unity $(\chi_i)_{i=1}^m$ associated to a finite cover $(R_i)_{i=1}^m$ by rectangles, we can define $\mu^u$ globally on $\mathbb T^3$:
$$\forall \varphi \in \Omega_2, \quad \mu^u(\varphi)=\sum_{i=1}^m \mu^u_{p_i}(\chi_i \varphi), \ p_i\in R, $$
and the value does not depend on any choice.
 The first equation in \eqref{eq:WF} is a consequence of \eqref{eq:Marg}. Indeed, let $\varphi \in \Omega_2$ be such that $\varphi$ is supported in $R_i$ and $f_*\varphi$ is supported in $R_j$, then for some $p_i\in R_i$ and $p_j\in R_j$,
 \begin{align*}
 f^*\mu^u(\varphi)&=\mu^u(f_*\varphi)=\int_{\mathcal W_{R_j}^u(p_j)}\Big(\int_{\mathcal W^{cs}_{R_j}(y)}f_*\varphi\Big)d\mu^u_{p_j}(y)=\int_{\mathcal W_{R_j}^u(p_j)}\Big(\int_{f^{-1}(\mathcal W^{cs}_{R_j}(y))}\varphi\Big)d\mu^u_{p_j}(y)
 \\&=\int_{\mathcal W_{R_j}^u(p_j)}\Big(\int_{\mathcal W_{R_j}^{cs}(f^{-1}(y))}\varphi\Big)d\mu^u_{p_j}(y)=\int_{\mathcal W_{R_i}^u(p_i)}\Big(\int_{\mathcal W_{R_i}^{cs}(z)}\varphi\Big)d(f^*\mu^u_{p_j})(y)
 \\&=\lambda_u\int_{\mathcal W_{R_i}^u(p_i)}\Big(\int_{\mathcal W_{R_i}^{cs}(z)}\varphi\Big)d\mu^u_{p_i}(y)=\lambda_u\mu^u(\varphi).
 \end{align*}
 We are left with showing the wavefront set bound. The proof follows \cite[Lemma 3.2]{Hum}. Let $\chi$ be a smooth $2$-form supported in $R$ and $S\in C^{\infty}$ such that $dS(q)=\xi \notin E_{cs}^*.$ 
 $$ \mu^u(e^{i\frac S h}\chi)=\int_{\mathcal W^{u}_R(q)}\Big( \int_{\mathcal W_R^{cs}(x)}e^{i\frac{S(y)} h}\chi(y)\Big) d\mu^u_q(x).$$
Integrating by parts in $y$ shows that the integrand is $O(h^{\infty})$ as long as $dS$ does not vanish on $\mathcal W_R^{cs}(x)$, which can be ensured near $q$ by the definition of $E_{cs}^*$, see \eqref{eq:dual}. This shows that $\xi \notin \mathrm{WF}(\mu^u)$ and thus $ \mathrm{WF}(\mu^u)\subset E_{cs}^*$.
\end{proof}
\begin{rem}
\label{remabs}
As explained in Remark \ref{rem}, the condition $\mathrm{WF}({\mu^u})\subset E_{cs}^*$ is stronger than the condition in \eqref{eq:cores}. Lemma \ref{lemm2} and \cite[Lemma 4.3.2]{Lef} imply that that $\mathrm{WF}(\mu_{J^c_f})\subset E_{cs}^*\oplus E_{cu}^*$. This is better than the usual wavefront set bound $\mathrm{WF}(\mu)\subset E_{s}^*\oplus E_{cu}^*$ for an equilibrium state $\mu$. This extra regularity in the center direction is actually a consequence of the fact that the conditional densities of $\mu_{J^c_f}$ along the center direction are absolutely continuous with respect to the Lebesgue measure.  This should be compared to the case of the SRB measure for which $\mathrm{WF}(\mu_{\mathrm{SRB}})\subset E_{cu}^*$ since its conditional densities on the stable manifolds are absolutely continuous.
\end{rem}
\begin{prop}
The conditionals of $\mu_{J^c_f}$ on the center foliation $(\mathcal W^c(x))_{x\in \mathbb T^3}$ are absolutely continuous with respect to the Lebesgue measures $d\mathrm{Vol}_{\mathcal W^c(x)}$.
\end{prop}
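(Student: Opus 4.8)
The plan is to read the statement off the wavefront set of $\mu_{J^c_f}$, in the spirit of Remark~\ref{remabs}. First I would record the wavefront bound. By Lemma~\ref{lemm2} the measure $\mu_{J^c_f}$ is the product $\nu\wedge\theta$ of the resonant state $\theta$, with $\mathrm{WF}(\theta)\subset E_{cu}^*$ by \eqref{eq:res}, and the co-resonant state $\nu=\mu^u$, with $\mathrm{WF}(\nu)\subset E_{cs}^*$ by \eqref{eq:WF}; since $E_{cu}^*\cap E_{cs}^*=\{0\}$ by \eqref{eq:dual}, the rule for the wavefront set of a product of distributions (\cite[Lemma 4.3.2]{Lef}, as in Remark~\ref{remabs}) gives $\mathrm{WF}(\mu_{J^c_f})\subset E_{cu}^*\oplus E_{cs}^*$. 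Now a dimension count finishes the linear algebra: $E_{cu}^*\oplus E_{cs}^*$ is a rank $2$ subbundle of $T^*\mathbb T^3$ contained in $\mathrm{Ann}(E_{cu}\cap E_{cs})=\mathrm{Ann}(E_c)$, i.e.\ in the conormal bundle $N^*\mathcal W^c$ of the center foliation, which is also of rank $2$; hence the two coincide and $\mathrm{WF}(\mu_{J^c_f})\subset N^*\mathcal W^c$.

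The conclusion would then follow from the microlocal characterization of absolute continuity along a foliation: a non-negative Radon measure whose wavefront set is contained in $N^*\mathcal F$, for $\mathcal F$ a foliation with $C^1$ leaves, disintegrates along $\mathcal F$ into conditional measures absolutely continuous with respect to the leafwise Lebesgue measure. Indeed, in a chart flattening the leaves to $\{s=\mathrm{const}\}$, the condition on the wavefront set says precisely that the measure is smooth in the leaf variable $t$ and distributional only in the transverse variable $s$, i.e.\ of the form $\rho(t,s)\,dt\otimes d\bar m(s)$ with $\rho$ smooth in $t$, which is absolute continuity (even smoothness) of the conditionals. Applying this with $\mathcal F=\mathcal W^c$ yields the proposition.

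The delicate point is this last implication, because $\mathcal W^c$ is only a Hölder foliation: its leaves are $C^\infty$, but the transverse holonomies are not smooth, so it cannot literally be flattened to a smooth model. I would handle this by running the argument \emph{inside a single leaf} $\mathcal W^{cs}(x)$, where the center leaves are genuinely $C^\infty$: by \eqref{eq:cond} the center conditionals of $\mu_{J^c_f}$ coincide a.e.\ (by transitivity of Rokhlin's disintegration, the center foliation refining $\mathcal W^{cs}$) with those of $\theta_x=\theta|_{\mathcal W^{cs}(x)}$, and Hörmander's restriction theorem gives $\mathrm{WF}(\theta_x)\subset \mathrm{Ann}(E_c)\subset T^*\mathcal W^{cs}(x)$ because every $\xi\in E_{cu}^*$ annihilates $E_c\subset E_{cu}$; the one-dimensional version of the displayed fact then applies. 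As a self-contained alternative avoiding microlocal analysis, one can instead use \eqref{eq:thetax} to write $\theta_x$, up to a constant and a bounded density factor, as the weak limit of $\lambda_u^{-n}(f^n)_*\mathrm{Vol}_{\mathcal W^{cs}(f^{-n}x)}$, decompose the leafwise volume along the center and stable sub-foliations of $\mathcal W^{cs}$ (which are absolutely continuous with bounded-distortion holonomies, being the unstable and stable foliations of the uniformly hyperbolic leafwise dynamics), and run a bounded-distortion estimate — using $J^c_f\in C^\alpha$ and the exponential contraction of $f^{-n}$ along center arcs — to see that the center conditional densities of the $n$-th approximant are bounded above and below uniformly in $n$; a standard limiting argument then transfers this to $\theta_x$, hence to $\mu_{J^c_f}$. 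I would present the microlocal derivation as the main proof and leave the distortion computation as an alternative or remark.
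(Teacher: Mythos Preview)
Your microlocal route has a real gap at the step where you pass to a single center--stable leaf. You write that inside $\mathcal W^{cs}(x)$ ``the center leaves are genuinely $C^\infty$'', but this is smoothness of each individual leaf, not of the foliation. The flattening argument needs a $C^\infty$ (or at least $C^1$) \emph{foliation} chart, i.e.\ $C^1$ transverse regularity of $\mathcal W^c$ inside $\mathcal W^{cs}(x)$; that transverse regularity is governed by how the family $\mathcal W^{cu}$ meets the fixed smooth surface $\mathcal W^{cs}(x)$, and is in general only H\"older --- exactly the obstruction you identified on $\mathbb T^3$, which restricting to a leaf does not remove. Without such a chart the implication ``$\mathrm{WF}\subset N^*\mathcal F\Rightarrow$ leafwise conditionals are absolutely continuous'' is unavailable; note indeed that $\mathrm{WF}(\theta_x)$ lies precisely in the conormal to center leaves, so H\"ormander's restriction theorem does not even let you restrict $\theta_x$ to a center leaf to read off a density. (Remark~\ref{remabs} records the wavefront bound as a \emph{consequence} of the absolute continuity, not a proof of it.) Your bounded--distortion alternative is closer to a working argument, but you still invoke without proof the absolute continuity of the center sub-foliation of $\mathcal W^{cs}$, and the passage from uniform bounds on conditionals of the approximants to conditionals of the weak limit needs justification.

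The paper takes a different route and works on the center--\emph{unstable} side, where the required regularity is available off the shelf: $\mathcal W^u$ is $C^1$ inside $\mathcal W^{cu}$ (cf.\ \cite[\S2.2.1]{ALOS}), so the $u$-holonomy between center leaves is absolutely continuous with explicit density $e^{\omega}$, $\omega(z,y)=\sum_{k\ge 0}\big(J^c_f(f^{-k}y)-J^c_f(f^{-k}z)\big)$. One then assembles a $cu$-system $\mu^{cu}_x(\phi)=\int_{\mathcal W^u(x)}\int_{\mathcal W^c(y)} e^{\omega(z,y)}\phi(z)\,d\mathrm{Vol}_{\mathcal W^c(y)}(z)\,d\mu^u_x(y)$ and checks by direct computation that it is $e^{-J^c_f}$-conformal with factor $e^{-P(J^c_f)}$. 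A black-box result of Carrasco--Rodriguez-Hertz \cite[Theorem~B]{CH2} then forces the $cu$-conditionals of $\mu_{J^c_f}$ to be absolutely continuous with respect to $\mu^{cu}$; since the center conditionals of $\mu^{cu}$ are Lebesgue by construction, the proposition follows by transitivity of disintegration. The moral: the regularity input actually used is $C^1$-ness of $\mathcal W^u$ inside $\mathcal W^{cu}$, not of $\mathcal W^c$ inside $\mathcal W^{cs}$, and the bridge to $\mu_{J^c_f}$ is thermodynamic formalism rather than wavefront calculus.
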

\begin{proof} For $z\in \mathbb T^3$ and $y\in \mathcal W^c(z)$, we define
\begin{equation}
\label{eq:densité}
\omega(z,y):=\sum_{k=0}^{+\infty}\big(J^c_f(f^{-k}y)-J^c_f(f^{-k}z)\big),
\end{equation}
where the sum converges since $\mathcal W^c$ is uniformly expanding and $J^c_f$ is Hölder continuous.
We first use that the strong unstable foliation is $C^1$ when restricted to the unstable foliation, see for instance \cite[Section 2.2.1]{ALOS}. In particular, that there is $\delta>0$, such that for any $x\in \mathbb T^3$ and $y\in \mathcal W^u(x,\delta)$, one has $(\mathrm{Hol}^u_{x,y})_*\mathrm{Vol}_{\mathcal W^c(x)}\ll\mathrm{Vol}_{\mathcal W^c(y)}$. The density can be computed using the fact that $d(f^{-k}x,f^{-k}y)\to0$ when $k\to+\infty$ and the fact that the holonomy is $C^1$. A direct computation using Jacobians shows that for any $z\in \mathbb T^3$ and $y\in \mathcal W^c(z)$, the density is given by the exponential of the function $\omega$ defined in \eqref{eq:densité}
\begin{align}
\label{eq:densi}\forall u\in \mathcal W^c(y),\quad 
\frac{d\big((\mathrm{Hol}^u_{z,y})_*\mathrm{Vol}_{\mathcal W^c(z)} \big)}{d\mathrm{Vol}_{\mathcal W^c(y)}}(u)=e^{\omega(u, \mathrm{Hol}^s_{y,z}u)}.
\end{align}
We now define a system of measures on the unstable foliation $\mathcal W^{cu}$. For any $x\in \mathbb T^3$ and any compactly supported $\phi \in C(\mathcal W^{cu}(x))$, we let
\begin{equation}
\label{eq:relat}
\mu^{cu}_x(\phi)=\int_{\mathcal W^u(x)}\left(\int_{\mathcal W^c(y)} e^{\omega(z,y)}\phi(z)d\mathrm{Vol}_{\mathcal W^c(y)}(z) \right)d\mu^u_x(y).
\end{equation}
The definition does not depend on the choice of base point $x$ in the unstable leaf by \eqref{eq:densi} and the fact that $\mu^u$ does not depend on the choice of base point in the strong unstable leaf. Since $\mu^u$ was shown to be a Radon measure with full support on each strong unstable leaf, see Theorem \ref{theoMarg}, this shows that $\mu^{cu}$ is also a Radon measure with full support on each unstable leaf. We now compute, using $-J^c_f(z)=\omega(f(z),z)$ and the cocycle relation $\omega(x,y)+\omega(y,z)=\omega(x,z)$,
\begin{align*}
\mu^{cu}_x(e^{-J^c_f(\cdot)}\phi \circ f)&=\int_{\mathcal W^u(x)}\left(\int_{\mathcal W^c(y)} e^{\omega(z,y)}e^{-J^c_f(z)}\phi(f(z))d\mathrm{Vol}_{\mathcal W^c(y)}(z) \right)d\mu^u_x(y)
\\&=\int_{\mathcal W^u(x)}\left(\int_{\mathcal W^c(y)} e^{\omega(z,y)}e^{\omega(f(z),z)}\phi(f(z))d\mathrm{Vol}_{\mathcal W^c(y)}(z) \right)d\mu^u_x(y)
\\&=\int_{\mathcal W^u(x)}\left(\int_{\mathcal W^c(y)} e^{\omega(f(z),y)}\phi(f(z))d\mathrm{Vol}_{\mathcal W^c(y)}(z) \right)d\mu^u_x(y)
\\&=\int_{\mathcal W^u(x)} f^*\big(\mathrm{Vol}_{\mathcal W^c(y)}(e^{\omega(\cdot, y)}\phi(\cdot)) \big)d\mu^u_x(y)
\\&=\int_{\mathcal W^u(f(x))}\left(\int_{\mathcal W^c(y)} e^{\omega(z,y)}e^{-J^c_f(z)}\phi(f(z))d\mathrm{Vol}_{\mathcal W^c(y)}(z) \right)df_*(\mu^u_x)(y)
\\&=e^{-h^u_{\mathrm{top}}(f)}\mu_{f(x)}^{cu}(\phi)=e^{-P(J^c_f)}\mu_{f(x)}^{cu}(\phi),
\end{align*}
where we used that $h^u_{\mathrm{top}}(f)=P(J^c_f)$ by Theorem \ref{main theo} and the conformal property of $\mu^u$, see Theorem \ref{theoMarg}.

In other words, we checked that the system $(\mu^{cu}_x)_{x\in \mathbb T^3}$ is a $cu$-system which satisfies properties $(2)$ and $(3)$ of \cite[Theorem A]{CH2} for the potential $J^c_f$. We can thus apply \cite[Theorem B]{CH2} which gives that the $cu$-conditionals of the equilibrium state $\mu_{J^c_f}$ associated to $J^c_f$ are absolutely continuous with respect to~$\mu^{cu}$.

From \eqref{eq:densi}, the $c$-conditionals of $\mu^{cu}$ are absolutely continuous with respect to the Lebesgue measure on $\mathcal W^c$ and we deduce that this is also the case for $\mu_{J^c_f}$.
\end{proof}

\printbibliography[
heading=bibintoc,
title={References}]

\end{document}